\numberwithin{equation}{section}
\numberwithin{figure}{section}
\theoremstyle{plain}
\newtheorem{thm}{\protect\theoremname}[section]
  \theoremstyle{definition}
  \newtheorem{defn}[thm]{\protect\definitionname}
  \theoremstyle{definition}
  \newtheorem{example}[thm]{\protect\examplename}
  \theoremstyle{plain}
  \newtheorem{prop}[thm]{\protect\propositionname}
  \theoremstyle{remark}
  \newtheorem{rem}[thm]{\protect\remarkname}
  \theoremstyle{plain}
  \newtheorem{lem}[thm]{\protect\lemmaname}
  \theoremstyle{plain}
  \newtheorem{cor}[thm]{\protect\corollaryname}
  \theoremstyle{plain}
  \newtheorem{conjecture}[thm]{\protect\conjecturename}
\apptocmd{\sloppy}{\hbadness 10000\relax}{}{}
\apptocmd{\sloppy}{\vbadness 10000\relax}{}{}
\theoremstyle{remark}
\newtheorem*{question*}{\protect\questionname}
\theoremstyle{plain}
\newtheorem{alphatheorem}{Theorem}
\providecommand{\corollaryname}{Corollary}
\providecommand{\definitionname}{Definition}
\providecommand{\examplename}{Example}
\providecommand{\lemmaname}{Lemma}
\providecommand{\propositionname}{Proposition}
\providecommand{\remarkname}{Remark}
\providecommand{\theoremname}{Theorem}
\providecommand{\questionname}{Question}
\providecommand{\conjecturename}{Conjecture}
\def\quot{/\!\!/}
\def\Ad{\mathsf{Ad}}
\def\tr{\mathsf{tr}}
\def\hom{\mathsf{Hom}}
\def\X{\mathfrak{X}}
\def\Y{\mathfrak{Y}}
\DeclareMathOperator{\GL}{\mathrm{GL}}
\DeclareMathOperator{\SL}{\mathrm{SL}}
\DeclareMathOperator{\PSL}{\mathrm{PSL}}
\DeclareMathOperator{\SU}{\mathrm{SU}}
\def\C{\mathbb{C}}
\def\Z{\mathbb{Z}}
\def\R{\mathbb{R}}
\def\id{\mathbf{1}}
\def\KN{\mathcal{KN}}
\def\norm{|\!|}
\newcommand{\liep}{\mathfrak{p}}
\newcommand{\liepc}{\mathfrak{p}^{\mathbb{C}}}
\newcommand{\liek}{\mathfrak{k}}
\newcommand{\liekc}{\mathfrak{k}^{\mathbb{C}}}
\newcommand{\lieg}{\mathfrak{g}}
\newcommand{\liegc}{\mathfrak{g}^{\mathbb{C}}}
\DeclareMathOperator{\Fix}{Fix}
\title[Flawed groups, character varieties]{Flawed groups and the topology of character varieties.}
\author[C. Florentino]{Carlos Florentino}
\address{Dep. Matematica, Fac. Ci\^{e}ncias, Univ. Lisboa, Edf. C6, Campo Grande 1749-016 Lisboa, Portugal.} 
\email{caflorentino@fc.ul.pt}
\author[S. Lawton]{Sean Lawton}
\address{Department of Mathematical Sciences, George Mason University, 4400 
University Drive, Fairfax, Virginia 22030, USA}
\email{slawton3@gmu.edu}
\date{\today}
\keywords{Character varieties, deformation retraction, GIT quotient, flawed groups}
\subjclass[2020]{Primary 14M35, 20F38, 14R20, 14P25; Secondary 20F19, 20F36}
\begin{document}

\begin{abstract}
A finitely presented group $\Gamma$ is called \textit{flawed} if
$\hom(\Gamma,G)\quot G$ deformation retracts onto its subspace $\hom(\Gamma,K)/K$
for all reductive affine algebraic groups $G$ and maximal compact subgroups
$K\subset G$. After discussing generalities concerning flawed groups,
we show that all finitely generated groups isomorphic to a free product
of nilpotent groups are flawed. This unifies and generalizes all previously
known classes of flawed groups. We also provide further evidence for
the authors' conjecture that RAAGs are flawed. Lastly,
we show direct products between finite groups and some flawed group
are also flawed. These latter two theorems enlarge the known class
of flawed groups. 
\end{abstract}


\maketitle
\tableofcontents{}

\section{Introduction}

Let $\Gamma$ be finitely presented group generated by $r$ elements.
Let $G$ be a connected, reductive, affine algebraic group over $\C$;
a reductive $\C$-group for short. $G$ acts by conjugation on the
set of homomorphisms $\hom(\Gamma,G)$. We may consider $\hom(\Gamma,G)$
as a subset of $G^{r}$ by identifying homomorphisms with their evaluations
at the generators of $\Gamma$. This simultaneously gives $\hom(\Gamma,G)$
the structure of an affine algebraic set, and an analytic topology
over $\C$ (which we assume throughout the paper).

Call a homomorphism in $\hom(\Gamma,G)$ \textit{polystable} if it
has a closed conjugation orbit, and let $\hom(\Gamma,G)^{*}$ be the
subspace of polystable homomorphisms. The \textit{$G$-character variety
of $\Gamma$} is the quotient space: 
\[
\X_{\Gamma}(G):=\hom(\Gamma,G)^{*}/G,
\]
with respect to the conjugation action. By \cite[Theorem 2.1]{FlLa4},
$\X_{\Gamma}(G)$ is homeomorphic (in the analytic topology) to the
Geometric Invariant Theory (GIT) quotient $\hom(\Gamma,G)\quot G$,
and so $\X_{\Gamma}(G)$ inherits a natural algebraic structure. It
is also homotopic to the non-Hausdorff quotient $\hom(\Gamma,G)/G$
by \cite[Proposition 3.4]{FLR}.

Inspired by \cite{BC}, in \cite{FlLa1} the authors show that $\X_{\Gamma}(G)$
strong deformation retracts to $\X_{\Gamma}(K)$ when $\Gamma$ is
a finitely generated free group, and $K$ is a maximal compact subgroup in
$G$. Later, in \cite{CFLO1,CFLO2} this result was extended to the
case when $G$ is \textit{real} reductive. In \cite{FlLa4} the same
theorem is shown to hold when $\Gamma$ is a finitely generated abelian
group. Generalizing the abelian case, in part by combining methods
used in \cite{FlLa1} and \cite{FlLa4}, the result is also shown
to be true for all finitely generated nilpotent groups $\Gamma$ in
\cite{Berg}. On the other hand, whenever $\Gamma$ is the fundamental
group of a closed orientable surface of genus $g\geq2$ (called a
hyperbolic surface group henceforth) such a deformation retraction
(indeed, even a homotopy equivalence) is impossible, as follows from
\cite{BiFl,FGN}.

With these examples in mind, following the suggestions in \cite{FlLa1},
we ask: \begin{question*}What conditions on $\Gamma$ allow for strong
deformation retractions of $\X_{\Gamma}(G)$ to $\X_{\Gamma}(K)$ to exist
for all reductive $\C$-groups $G$ with maximal compact subgroup $K\subset G$?
\end{question*}

We call such groups $\Gamma$ \textit{flawed}.\footnote{The name comes from the first two letters of the first named author and the second two letters of the second named author.} When such a deformation retraction does not exist for any non-abelian
$G$, like hyperbolic surface groups, we call the group $\Gamma$
\emph{flawless}.  

\subsection{Summary of Results}
In Section \ref{NC} we prove a general (necessary and sufficient) criterion for flawedness (Theorem \ref{thm:numeric}). Thereafter, we prove our first main theorem (Theorem \ref{thm:main}).  This theorem, although a corollary of the main result in \cite{Berg} and \cite[Corollary 4.10]{FlLa3}  is worth highlighting as it unifies all known cases of flawed groups and brings together the circle of ideas used in \cite{FlLa1, FlLa4,Berg}. 

\begin{alphatheorem}\label{main:thm} 
Let $\Gamma$ be a finite presentable group isomorphic to a free product of nilpotent groups. Then $\Gamma$ is flawed. 
\end{alphatheorem}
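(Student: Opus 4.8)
The plan is to deduce the statement from two facts available in the literature: the main theorem of \cite{Berg}, that every finitely generated nilpotent group is flawed, and \cite[Corollary 4.10]{FlLa3}, that a free product of finitely many flawed groups is again flawed. The work then reduces to checking that the hypotheses of these two results are met by the free factors of $\Gamma$.

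First I would deal with finiteness of the decomposition. Writing $\Gamma \cong \Gamma_1 * \cdots * \Gamma_n$ with each $\Gamma_i$ nilpotent, note that $\Gamma$ is finitely generated, being finitely presented. By Grushko's theorem the rank of a free product is the sum of the ranks of its factors, so only finitely many $\Gamma_i$ are nontrivial and each such $\Gamma_i$ is finitely generated. Since a finitely generated nilpotent group is finitely presented, each $\Gamma_i$ is a finitely presented group to which the notion of flawedness applies. Next I would invoke \cite{Berg}: each finitely generated nilpotent factor $\Gamma_i$ is flawed, i.e.\ for every reductive $\C$-group $G$ with maximal compact $K$ the variety $\X_{\Gamma_i}(G)$ strong deformation retracts onto $\X_{\Gamma_i}(K)$. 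Applying \cite[Corollary 4.10]{FlLa3} to the flawed groups $\Gamma_1, \ldots, \Gamma_n$ then shows that their free product $\Gamma$ is flawed, which is the claim.

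The real content lives in the free-product step, and this is where the main obstacle would lie were one to prove the statement directly rather than cite it. The source of the difficulty is that, although $\hom(\Gamma_1 * \cdots * \Gamma_n, G) \cong \prod_i \hom(\Gamma_i, G)$ as $G$-varieties for the diagonal conjugation action, the GIT quotient of this product is \emph{not} the product of the individual quotients $\X_{\Gamma_i}(G)$, since conjugation acts diagonally rather than factorwise. One therefore cannot simply take the product of the retractions supplied by \cite{Berg}; instead one must produce a single $G$-equivariant strong deformation retraction of $\prod_i \hom(\Gamma_i, G)$ onto $\prod_i \hom(\Gamma_i, K)$ that is compatible with the Kempf--Ness description of the polystable loci, and then verify that the resulting homotopy descends to the quotient by the diagonal $G$-action. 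Establishing this compatibility across all factors simultaneously is exactly the technical heart encapsulated in \cite[Corollary 4.10]{FlLa3}, and once it is granted the theorem follows at once from the nilpotent case.
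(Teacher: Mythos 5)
Your overall architecture --- Grushko's theorem to reduce to finitely many finitely generated nilpotent factors, then Bergeron's theorem plus \cite[Corollary 4.10]{FlLa3} --- is exactly the paper's route (the paper itself describes Theorem A as an immediate corollary of these two results, and your Grushko step is a nice explicit justification of a reduction the paper leaves implicit). However, there is a genuine logical gap in how you feed the two results into each other. You state the free-product closure as ``a free product of finitely many \emph{flawed} groups is again flawed,'' and you invoke from \cite{Berg} only that each nilpotent factor is flawed, i.e., that $\X_{\Gamma_i}(G)$ strong deformation retracts onto $\X_{\Gamma_i}(K)$. That is not the hypothesis of \cite[Corollary 4.10]{FlLa3}, and it is not known to suffice: the class of flawed groups is \emph{not} known to be closed under free products. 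What the corollary (and the paper's Theorem \ref{thm:freeproduct}) actually requires is that each factor be \emph{strongly} flawed, i.e., that there is a $K$-equivariant SDR at the level of representation spaces, $\hom(\Gamma_i,G)\to\hom(\Gamma_i,K)$. Your own third paragraph explains precisely why quotient-level retractions cannot simply be multiplied (the GIT quotient of the product under the diagonal action is not the product of the quotients), so as written your second paragraph contradicts your third: you feed quotient-level data into a result whose hypothesis is hom-level equivariant data.

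The gap is repairable, and the repair is exactly what the paper does. Bergeron's theorem \cite[Theorem 1]{Berg} gives more than flawedness: it produces a $K$-equivariant SDR from $\hom(\Gamma_i,G)$ onto $\hom(\Gamma_i,K)$ for each finitely generated nilpotent $\Gamma_i$, i.e., strong flawedness. Products of such SDRs give a $K^m$-equivariant SDR of $\hom(\Gamma,G)\cong\prod_i\hom(\Gamma_i,G)$ onto $\hom(\Gamma,K)$, which is in particular equivariant for the diagonal copy of $K$ (Theorem \ref{thm:freeproduct}); then strong flawedness implies flawedness via the Kempf--Ness/Whitehead criterion (Theorems \ref{thm:strongimplies} and \ref{thm:numeric}). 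Note also that the retraction one needs is $K$-equivariant, not $G$-equivariant as you wrote: $\hom(\Gamma,K)$ is not stable under conjugation by $G$, so a $G$-equivariant SDR onto it cannot exist in general, and the passage to the GIT quotient is handled not by descending the homotopy through the $G$-action but by the Kempf--Ness set machinery applied to the intermediate quotient $\hom(\Gamma,G)/K$.
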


In particular, $\PSL(2,\Z)$ is flawed.  Theorem  \ref{main:thm} follows since all finitely generated nilpotent groups are {\it strongly flawed}, a concept we introduce in Section \ref{NC}.

Motivated by work in \cite{PeSo}, in \cite{FlLa4} the authors conjectured that all right angled Artin groups (RAAGs) with torsion are flawed.\footnote{The usual definition of a RAAG does not allow torsion, which is why we say ``with torsion'' to allow for elements to have finite order.  Precisely, these are graph products of cyclic groups.}

As another corollary of Theorem \ref{main:thm}, we see that free products of cyclic groups (with or without torsion) are flawed, giving further evidence for the conjecture that all RAAGs with torsion are flawed.

We then turn our attention to more general RAAGs and prove that in some cases they
are flawed. More precisely, our second main theorem (Theorems \ref{thm:star-RAAG}
and \ref{thm:connectedRAAG}) is this:

\begin{alphatheorem}\label{main:thm2} Let $\Gamma$ be a star-shaped
RAAG $($see Definition \ref{def:star}$)$, then $\Gamma$ is flawed.
Moreover, if $\Gamma$ is a connected RAAG, there is a distinguished
irreducible component $\X_{\Gamma}^{*}(G)\subset\X_{\Gamma}(G)$ such
that, for every reductive $\C$-group $G$ with maximal compact subgroup
$K$, $\X_{\Gamma}^{*}(G)$ strong deformation retracts to $\X_{\Gamma}^{*}(K):=\X_{\Gamma}(K)\cap\X_{\Gamma}^{*}(G)$. 
\end{alphatheorem}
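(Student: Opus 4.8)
The plan is to reduce both assertions to the numeric criterion of Theorem \ref{thm:numeric} together with the flawedness of free products of nilpotent groups (Theorem \ref{main:thm}), exploiting the fact that a star-shaped RAAG is a direct product of a central cyclic group with a free product of cyclic groups. Concretely, if $\Upsilon$ is a star with central vertex $v_{0}$ and leaves $v_{1},\dots,v_{n}$ (Definition \ref{def:star}), then the associated group factors as $\Gamma\cong C_{0}\times L$, where $C_{0}=\langle v_{0}\rangle$ is cyclic and $L=C_{1}\ast\cdots\ast C_{n}$ is the free product of the leaf cyclic groups; since $v_{0}$ commutes with every leaf while distinct leaves are unrelated, this is exactly the graph product on a star. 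A homomorphism $\rho\colon\Gamma\to G$ is then a pair $(A,\rho_{L})$ with $A=\rho(v_{0})$ and $\rho_{L}\colon L\to G$, subject to the single constraint that $A$ lie in the centralizer $Z_{G}(\rho_{L}(L))$.

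First I would pin down the polystable locus. For a polystable $\rho$ one may take $A$ semisimple, and after conjugation place it in the complexification of a maximal compact torus $T\cap K$; its centralizer $H:=Z_{G}(A)$ is then a reductive subgroup of $G$ containing $A$ in its center, and $\rho_{L}$ factors through $H$. Because $L$ is a free product of abelian, hence nilpotent, groups, Theorem \ref{main:thm} shows $L$ is flawed, so $\X_{L}(H)$ strong deformation retracts onto $\X_{L}(K_{H})$ for the maximal compact $K_{H}=K\cap H$. Combining this with the toral retraction carrying the semisimple $A$ onto $T\cap K$, every polystable $\rho$ is brought into $\hom(\Gamma,K)$; verifying that this is precisely the condition demanded by Theorem \ref{thm:numeric} yields flawedness of $\Gamma$.

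The main obstacle is uniformity: the centralizer $H=Z_{G}(A)$ jumps as $A$ crosses walls in the torus, so the stratum-by-stratum retractions above would have to be glued into a single strong deformation retraction of the GIT quotient. Rather than construct such a flow by hand, I would route the entire argument through Theorem \ref{thm:numeric}, whose numeric condition is insensitive to this stratification: it suffices to check, over each centralizer type, that the norm-minimizing representatives of polystable orbits already lie in $K$-representations, which is guaranteed by the flawedness of $L$ inside each reductive $H$ together with the fact that a semisimple central $A$ is $G$-conjugate into $K$. This decouples the two factors at the level of the criterion even though they remain coupled at the level of any explicit homotopy.

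For the second statement I would take $\X_{\Gamma}^{*}(G)$ to be the distinguished irreducible component containing the class of the trivial representation; equivalently, the component carrying the toral representations that factor through the abelianization $\Gamma^{\mathrm{ab}}$. Connectivity of $\Upsilon$ is what makes this component canonical and allows one to propagate a common one-parameter scaling coherently across all generators, so that the same central-element/centralizer reduction applies after restricting to $\X_{\Gamma}^{*}(G)$. Since the Kempf--Ness gradient flow preserves connected components and $\X_{\Gamma}^{*}(K):=\X_{\Gamma}(K)\cap\X_{\Gamma}^{*}(G)$ lies in the zero level set of the moment map, verifying the numeric criterion on this single component produces the asserted strong deformation retraction of $\X_{\Gamma}^{*}(G)$ onto $\X_{\Gamma}^{*}(K)$, even when other components (which may be flawless, as for hyperbolic surface groups) obstruct flawedness of $\Gamma$ as a whole. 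The delicate point here is showing that this distinguished component is genuinely a single irreducible component and is preserved by the retraction; I expect this to follow from connectedness of $\Upsilon$ by an inductive build-up over a spanning tree.
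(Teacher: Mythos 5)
There is a genuine gap, and it sits exactly where you flagged it. Your decomposition $\Gamma_\star\cong\Z\times\mathsf{F}_r$ is correct (the paper notes this in a remark), but your proposed fix for the "jumping centralizer" problem does not work. You assert that Theorem \ref{thm:numeric} lets you verify flawedness by checking, "over each centralizer type, that the norm-minimizing representatives of polystable orbits already lie in $K$-representations." That condition is both false in general and not what the criterion asks. The Kempf--Ness set $\KN_\Gamma$ properly contains $\hom(\Gamma,K)$ even for free groups: any tuple of normal, non-unitary matrices (say positive diagonal ones) satisfies $\sum_i[g_i^*,g_i]=0$, so norm-minimizers need not be unitary, and flawedness never requires $\KN_\Gamma=\hom(\Gamma,K)$. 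What Theorem \ref{thm:numeric} actually requires is that the single global inclusion $\X_\Gamma(K)\subset\KN_\Gamma/K$ induce isomorphisms on \emph{all} homotopy groups; this is a statement about the topology of the whole space and cannot be certified stratum-by-stratum over centralizer types without precisely the gluing argument you set aside. So the "decoupling" you invoke does not occur, and the core of the first assertion remains unproved. The paper avoids stratification entirely: by Theorem \ref{thm-abelianpaper} (from \cite{FlLa4}) it suffices, for a RAAG with torsion, to produce a $K$-equivariant retraction of $\hom(\Gamma,G_K)$ onto $\hom(\Gamma,K)$, and this is built explicitly by conjugating the central generator into $K$, writing each leaf as $A_i=k_ie^{x_i}h_i^*$ (KAK), and scaling $A_i(t)=k_ie^{tx_i}h_i^*$; Proposition \ref{prop:commuting-deformation} is the key technical input showing the commutation relations with the central generator persist along this flow.

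The second half of your proposal has two further problems. First, "the irreducible component containing the class of the trivial representation" is not well-defined: in the paper's own example $\Gamma_\angle$ with $G=\SL(2,\C)$, the trivial representation lies in all three irreducible components of $\X_{\Gamma_\angle}(G)$. The paper's distinguished component is the \emph{abelian} component, realized concretely as the closure of $\hom^{de}(\Gamma,G)\quot G$, the locus where every generator is elliptic with distinct eigenvalues. Second, your proposal is missing the mechanism that makes this component work, namely Corollary \ref{cor:elliptic-to-unitary}: an elliptic element commuting with a normal element having distinct eigenvalues is itself unitary. Connectivity of the graph is used to propagate this statement from one generator to the next, showing that the Kempf--Ness set of $\hom^{de}(\Gamma,G)$ is exactly $\hom^{de}(\Gamma,K)$, after which a closure argument identifies $\KN_\Gamma^{\overline{Y}}/K$ with $\X_\Gamma^*(K)$. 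Your appeals to "propagating a common one-parameter scaling," to the Kempf--Ness flow preserving components, and to "verifying the numeric criterion on this single component" are assertions standing in for this lemma rather than a proof of it; in particular, Theorem \ref{thm:numeric} as stated is a criterion for flawedness of the whole character variety, not a component-wise tool, so even the framework of your reduction would need to be rebuilt.
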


If there is always such a distinguished irreducible component $\X_{\Gamma}^{*}(G)$
that strong deformation retracts to $\X_{\Gamma}^{*}(K)$ as in Theorem
\ref{main:thm2}, then $\Gamma$ will be called \textit{special flawed}
(see Section \ref{boot-section}). 

Also in Section \ref{boot-section},
we define the notion of \textit{$G$-flawed}, allowing for a more
nuanced discussion of flawedness (see Theorems \ref{thm:Ad(G)}, \ref{simpletored}
and \ref{thm:real}).  Precisely, a group $\Gamma$ is $G$-flawed if $\X_{\Gamma}(G)$ strong deformation retracts to $\X_{\Gamma}(K)$ for a fixed $G$.  At the time of this writing, we know of no examples of a group $\Gamma$ that is $G$-flawed but not $H$-flawed for non-abelian non-isomorphic groups $H$ and $G$.

In the penultimate Section \ref{NF}, we prove our third main theorem (Theorems
\ref{thm:nilbyfinite} and \ref{thm:freebyfinite}) which gives classes
of flawed groups that do not fit into either of the aforementioned
classes of flawed groups described in Theorems \ref{main:thm} and
\ref{main:thm2}. Let $\mathsf{F}_{r}$ denote a free group of rank
$r\in\mathbb{N}$.

\begin{alphatheorem}\label{main:thm3} Let $F$ be a finite group,
$\Gamma_{1}\cong F\times\mathsf{F}_{r}$ and $\Gamma_{2}\cong F\times N$
where $N$ is a finitely generated nilpotent group. Then $\Gamma_{1}$
is flawed, and $\Gamma_{2}$ is special flawed. 
\end{alphatheorem}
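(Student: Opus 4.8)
The plan is to reduce the flawedness of a direct product $F\times H$ with $F$ finite to that of the factor $H$, by slicing the character variety according to the finitely many conjugacy classes of the restriction to $F$. First I would observe that a homomorphism $\rho\colon F\times H\to G$ is the same datum as a pair $(\rho|_F,\rho|_H)$ with commuting images; equivalently, once $\rho|_F=\phi$ is fixed, $\rho|_H$ is an arbitrary homomorphism $H\to Z_G(\phi(F))$. Since $F$ is finite, every homomorphism $F\to G$ is semisimple, so its $G$-orbit is closed and $\X_F(G)=\hom(F,G)/G$ is a \emph{finite} set; moreover the centralizer $Z_G(\phi(F))$ of a finite (hence linearly reductive) subgroup is again a reductive $\C$-group, although in general a \emph{disconnected} one. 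Restriction to $F$ is a $G$-equivariant morphism $\hom(F\times H,G)\to\hom(F,G)$, and a Luna-slice analysis at a fixed $\phi$, whose stabilizer is exactly $Z_G(\phi(F))$, identifies the fiber of $\X_{F\times H}(G)\to\X_F(G)$ over $[\phi]$ with $\X_H(Z_G(\phi(F)))$. As $\X_F(G)$ is finite and discrete, this yields a homeomorphism
\[
\X_{F\times H}(G)\ \cong\ \bigsqcup_{[\phi]\in\X_F(G)}\X_H\!\big(Z_G(\phi(F))\big).
\]

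Next I would match this decomposition with its compact counterpart. Fix a Cartan involution $\theta$ of $G$ with $K=G^\theta$. Every class in $\X_F(G)$ has a representative $\phi$ landing in $K$ (finite subgroups are compact, hence conjugate into the maximal compact $K$), and one checks that $\X_F(K)\to\X_F(G)$ is a bijection. Because $\phi(F)\subset K$ is $\theta$-fixed, the centralizer $Z_G(\phi(F))$ is $\theta$-stable, so $\theta$ restricts to a Cartan involution of $Z_G(\phi(F))$ whose fixed-point group is $Z_K(\phi(F))=Z_G(\phi(F))\cap K$; thus $Z_K(\phi(F))$ is a maximal compact subgroup of $Z_G(\phi(F))$. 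The same slicing over $\X_F(K)=\X_F(G)$ gives
\[
\X_{F\times H}(K)\ \cong\ \bigsqcup_{[\phi]\in\X_F(G)}\X_H\!\big(Z_K(\phi(F))\big),
\]
compatibly with the $G$-decomposition, component by component, with $Z_K(\phi(F))\subset Z_G(\phi(F))$ maximal compact.

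With the two decompositions aligned, flawedness of $F\times H$ becomes the question of whether each piece $\X_H(Z_G(\phi(F)))$ strong deformation retracts onto $\X_H(Z_K(\phi(F)))$, after which the finitely many retractions assemble into a single one. For $\Gamma_1=F\times\mathsf{F}_r$ the relevant factor is the free group, and here the retraction of \cite{CFLO1,CFLO2} applies to \emph{arbitrary} (possibly disconnected) reductive $\C$-groups, as these are real reductive; hence every slice retracts and $\Gamma_1$ is flawed. For $\Gamma_2=F\times N$ with $N$ finitely generated nilpotent, the nilpotent retraction of \cite{Berg} is available only when the centralizer is connected. On the slice of the trivial $F$-representation, however, $Z_G(\phi(F))=G$ is connected and $\X_N(G)$ retracts fully onto $\X_N(K)$ by Theorem \ref{main:thm}; its distinguished irreducible component then embeds as a distinguished irreducible component $\X^*_{F\times N}(G)$ of $\X_{F\times N}(G)$ retracting onto its compact analogue, exactly as in Theorem \ref{main:thm2}, so that $\Gamma_2$ is special flawed.

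The main obstacle is precisely the possible \emph{disconnectedness} of the centralizers $Z_G(\phi(F))$: the whole argument hinges on how far the flawedness of the factor $H$ persists for disconnected reductive groups. For free groups this persistence is total, which is what makes $\Gamma_1$ genuinely flawed; for nilpotent groups it is only partial, controllable on a distinguished component, which is exactly why $\Gamma_2$ is asserted to be special flawed rather than flawed. I would therefore spend most of the effort verifying that the slice decomposition is a homeomorphism of the polystable GIT quotients, not merely a bijection of sets, and that the retractions, canonical on each slice, glue continuously across the finite index set $\X_F(G)$.
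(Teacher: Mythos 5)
Your reduction is genuinely different from the paper's argument, and it has a real gap at its central step. The slice decomposition itself is sound: since $F$ is finite, $\hom(F,G)$ is a finite union of open-and-closed $G$-orbits, so $\hom(F\times H,G)$ splits into clopen $G$-stable pieces indexed by $[\phi]\in\X_{F}(G)$, the piece over $[\phi]$ being the associated bundle $G\times_{Z_{G}(\phi(F))}\hom\!\left(H,Z_{G}(\phi(F))\right)$ with GIT quotient $\hom\!\left(H,Z_{G}(\phi(F))\right)\quot Z_{G}(\phi(F))$. The problem is what you feed into each slice. For $\Gamma_{1}$ you need $\mathsf{F}_{r}$ to be flawed for the groups $Z_{G}(\phi(F))$, which are in general \emph{disconnected} complex reductive groups, and you justify this by saying the free-group retraction of \cite{CFLO1,CFLO2} applies ``as these are real reductive.'' That is exactly where the argument breaks: in the terminology of this paper (and of \cite{CFLO1}), a real $K$-reductive group is a Zariski-dense real algebraic subgroup of the $\R$-points of a \emph{connected} reductive $\C$-group; a Zariski-disconnected complex reductive group is Zariski closed, hence can never be Zariski dense in a connected group, so centralizers such as $Z_{\PSL(2,\C)}(\phi(F))\cong\C^{*}\rtimes\Z_{2}$ (for $\phi$ sending a generator of $\Z_{2}$ to the class of $\mathrm{diag}(i,-i)$) are simply not in the class those theorems cover. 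No cited result establishes the free-group retraction for quotients by disconnected reductive groups (equivalently, for the twisted conjugation actions of the identity component that appear on the other components), so ``for free groups this persistence is total'' is an assertion, not a proof --- and it is precisely the main content that needs to be supplied. Your $\Gamma_{2}$ argument fares better, because there you only use the clopen slice over the trivial $\phi$, where the centralizer is $G$ itself and Theorem \ref{main:thm} (together with the paper's observation that flawed implies special flawed) applies.

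The paper avoids this issue entirely by never passing to centralizers. For $\Gamma_{1}\cong\mathsf{F}_{r}\times F$ it conjugates the image of $F$ into $K$ once, identifying $\hom^{ps}(\Gamma_{1},G)\quot G$ with $\hom_{0}(\Gamma_{1},G)/K$, writes the images of the free generators in KAK form $A_{i}=k_{i}e^{x_{i}}h_{i}^{*}$, and scales $A_{i}(t)=k_{i}e^{tx_{i}}h_{i}^{*}$ while keeping the $B_{j}\in K$ fixed; Proposition \ref{prop:commuting-deformation} guarantees that the relations $[A_{i}(t),B_{j}]=1$ survive the deformation, so one obtains a $K$-equivariant SDR with no hypothesis whatsoever on the structure of $Z_{G}(\rho(F))$. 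In effect, that proposition is the substitute for the disconnected-group free-group theorem your approach requires. For $\Gamma_{2}$ the paper likewise argues directly, via Bergeron's normality of polystable nilpotent tuples, the distinct-eigenvalue locus $\hom^{de}$, the identification of its Kempf--Ness set as tuples of normal matrices and unitary matrices, and the eigenvalue scaling retraction of \cite{FlLa4}. To rescue your approach for $\Gamma_{1}$ you would have to prove the free-group retraction for disconnected reductive groups, for instance by running the paper's KAK and commuting-deformation argument slice by slice --- at which point you would essentially have reproduced the paper's proof.
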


In the final Section \ref{QC}, we discuss questions and conjectures for further research.

\subsection{Philosophy}
The guiding philosophy of this paper is that quantifying over the geometric structure defining character varieties (the Lie group $G$) gives group-theoretic properties about $\Gamma$, and such persistent structure should be understood as a general feature in (geometric) group theory.  In short, the topology and geometry of the collection of representations of a group $\Gamma$ is an organizing principle for classifying and distinguishing such groups.  

A well-known example of this philosophy is that K\"ahler groups determine uniform singularity types in character varieties \cite{Goldman-Millson}. An explicit (recent) application of the concept of flawed groups discussed in this paper is \cite[Proposition 2.4]{BHJL}.  This latter result uses the notion of flawed groups to prove that if $G$-character varieties of orientable surface groups are isomorphic, then the Euler characteristic of the underlying surfaces are equal.  Flawedness is used to distinguish open surfaces from closed surfaces (which dimension alone cannot do).

\subsection*{Acknowledgments}

Lawton was supported by a Collaboration Grant from the Simons Foundation
(\#578286), and Florentino was supported by CMAFcIO (University of
Lisbon), and the project QuantumG PTDC/MAT-PUR/30234/2017, FCT Portugal.
We thank Henry Wilton and Lior Silberman for helpful comments.  We also thank Maxime Bergeron for suggesting Lemma \ref{berg-lem}.  Lastly, we thank the referee for helpful comments.

\section{A Numerical Criterion for Flawed Groups}

\label{NC}

Let $\Gamma$ be a finitely presentable group. Assume $G$ is a reductive
$\C$-group (a connected reductive affine algebraic group over $\mathbb{C}$).
Then there exists a faithful representation of $G$ and so we may
assume $G\subset\SL(n,\C)$ for some $n$. Moreover, we can arrange
for a maximal compact subgroup in $G$, denoted $K$, such that $K\subset\SU(n)$.
As in the introduction, $\X_{\Gamma}(G)=\hom(\Gamma,G)\quot G$ and
$\X_{\Gamma}(K)=\hom(\Gamma,K)/K$ are the corresponding character
varieties of $\Gamma$. By \cite[Proposition 4.5]{FlLa3}, there is
a natural inclusion of CW complexes: 
\begin{equation}
i_{G}:\X_{\Gamma}(K)\to\X_{\Gamma}(G).\label{eq:iG}
\end{equation}

We adopt the following standard terminology. By a \textit{strong deformation
retraction} (SDR) we mean an inclusion $\iota:Y\hookrightarrow X$
of topological spaces such that the identity map $id_{X}$ is homotopic
(relative to $Y$) to a retract $r:X\to Y$ (that is, $r\circ\iota=id_{Y}$).
We also say ``$Y$ is a strong deformation retract of $X$,'' when
such a SDR $\iota:Y\to X$ exists. 
\begin{defn}
We say that $\Gamma$ is \textbf{flawed} if $i_{G}$ \eqref{eq:iG}
is a SDR for all $G$. We say that $\Gamma$ is \textbf{flawless}
if $i_{G}$ is not a SDR for any non-abelian $G$. 
\end{defn}
Let us list a number of examples which are mostly results from previous
work. 
\begin{example}
All finite groups are flawed. This is an immediate consequence of
the following stronger fact in this simple case.\end{example}
\begin{prop}
If $\Gamma$ is a finite group, then $i_{G}:\X_{\Gamma}(K)\to\X_{\Gamma}(G)$
is a homeomorphism.\end{prop}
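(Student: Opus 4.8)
The plan is to show that for a finite group $\Gamma$, the homomorphism spaces $\hom(\Gamma,G)$ and $\hom(\Gamma,K)$ consist entirely of representations with finite (hence compact) image, and that every representation into $G$ is conjugate to one landing in $K$. The key structural fact is that a finite group has only finitely many irreducible representations up to isomorphism, and every complex representation of a finite group is \emph{unitarizable}: by the standard averaging argument (Weyl's unitary trick), any finite-dimensional representation $\rho:\Gamma\to\GL(n,\C)$ preserves some Hermitian inner product, so $\rho$ is conjugate in $\GL(n,\C)$ to a representation into $\Un(n)$. One must then upgrade this to the statement that a homomorphism into $G\subset\SL(n,\C)$ is conjugate \emph{within $G$} to one landing in $K\subset\SU(n)$, which is where the maximal compact subgroup structure enters.

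First I would observe that since $\Gamma$ is finite, every $\rho\in\hom(\Gamma,G)$ has image a finite subgroup of $G$, which is automatically contained in a compact subgroup; by conjugacy of maximal compact subgroups in $G$ (a standard fact for reductive $\C$-groups, where all maximal compacts are conjugate), there exists $g\in G$ with $g\,\rho(\Gamma)\,g^{-1}\subset K$. This shows the inclusion $\hom(\Gamma,K)\hookrightarrow\hom(\Gamma,G)$ is surjective up to the $G$-action, which is exactly what is needed for $i_{G}$ to be surjective on quotients. Next I would argue injectivity: if two representations into $K$ are $G$-conjugate, they are already $K$-conjugate. This follows because finite groups are \emph{linearly reductive} and their representation theory is semisimple, so two representations with the same character are isomorphic; combined with the fact that over $K$ isomorphism classes are detected by $K$-conjugacy (again via unitarizability of the intertwiner), one concludes the fibers of $i_{G}$ are singletons.

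The step I expect to be the main obstacle is verifying that $i_{G}$ is genuinely a \emph{homeomorphism} rather than merely a continuous bijection, since continuity of the inverse must be checked in the analytic topology on these GIT quotients. The cleanest route is to note that $\hom(\Gamma,G)$ is a \emph{finite} union of closed conjugation orbits: because $\Gamma$ is finite there are only finitely many isomorphism classes of representations of each dimension, the image lies in a compact (finite) set of character values, and each orbit is closed (finite-image representations are polystable). Consequently both $\X_\Gamma(G)$ and $\X_\Gamma(K)$ are finite sets (or at worst compact spaces with finitely many components indexed by the same finite data), and a continuous bijection between compact Hausdorff spaces is automatically a homeomorphism. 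I would therefore structure the proof as: (1) every $G$-representation of $\Gamma$ is $G$-conjugate into $K$ via maximal-compactness; (2) $G$-conjugacy and $K$-conjugacy coincide on $\hom(\Gamma,K)$ via semisimplicity and unitarizability of intertwiners; (3) $i_{G}$ is thus a continuous bijection between compact Hausdorff spaces and hence a homeomorphism. The subtlety worth flagging is ensuring the conjugating and intertwining elements can be chosen inside $G$ (respectively $K$) rather than merely in the ambient $\GL(n,\C)$ or $\SL(n,\C)$; this is guaranteed by the conjugacy of maximal compact subgroups together with the fact that character-variety points are genuinely $G$-orbits of polystable representations.
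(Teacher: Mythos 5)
Your surjectivity step and your concluding compactness argument are exactly the paper's proof: since $\Gamma$ is finite, every $\rho\in\hom(\Gamma,G)$ is polystable and has compact image, hence is conjugate into $K$ because all maximal compact subgroups of $G$ are conjugate; then $i_{G}$ is a continuous bijection from the compact space $\X_{\Gamma}(K)$ onto the Hausdorff space $\X_{\Gamma}(G)$, hence a homeomorphism. Your detour through finiteness of the character varieties is unnecessary (compactness of $\X_{\Gamma}(K)=\hom(\Gamma,K)/K$ is immediate and is all the paper uses), and its justification -- ``finitely many isomorphism classes of representations of each dimension'' -- conflates isomorphism of linear representations with $G$-conjugacy, which is the same conflation that causes the real problem below.

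The genuine gap is your injectivity step (2). Your chain of reasoning -- $G$-conjugate representations into $K$ have equal characters, equal characters give isomorphism by semisimplicity, and an isomorphism of unitary representations is implemented by a unitary intertwiner -- only produces a conjugating element in $\Un(n)$, not in $K$. For a general reductive $G\subset\SL(n,\C)$ the implication ``isomorphic as linear representations $\Rightarrow$ $K$-conjugate'' is simply false: take $G=\C^{*}\times\C^{*}$ embedded diagonally, $K=S^{1}\times S^{1}$, $\Gamma=\Z/3$, and the homomorphisms sending the generator to $(\omega,\omega^{2})$ and $(\omega^{2},\omega)$ with $\omega=e^{2\pi i/3}$; these have the same character and are conjugate by the permutation matrix in $\Un(2)$, but are conjugate by no element of $K$ (nor of $G$, which is abelian). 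So your method would wrongly identify distinct points of $\X_{\Gamma}(K)$. What injectivity actually requires is: if $g\rho_{1}g^{-1}=\rho_{2}$ with $g\in G$ and $\rho_{1},\rho_{2}\in\hom(\Gamma,K)$, then some $k\in K$ conjugates $\rho_{1}$ to $\rho_{2}$. The standard proof uses the Cartan (polar) decomposition \emph{inside} $G$: write $g=kp$ with $k\in K$ and $p$ positive; applying the Cartan involution to the intertwining relation shows $p^{2}=g^{*}g$ centralizes $\rho_{1}(\Gamma)$, hence so does $p$ (uniqueness of positive square roots), and therefore $k\rho_{1}k^{-1}=\rho_{2}$. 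The paper never re-proves this: injectivity of $i_{G}$ is built into its setup via \cite[Proposition 4.5]{FlLa3}, and its proof of the proposition consists only of surjectivity plus compactness. Your closing remark flags exactly this subtlety (choosing the intertwiner inside $K$ rather than $\Un(n)$), but the justification you offer -- conjugacy of maximal compacts and polystability -- does not supply the missing argument.
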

\begin{proof}
This is a corollary of many well known facts. For completeness, we
provide an elementary proof. Let $\Gamma$ be finite. Then each $\rho\in\hom(\Gamma,G)$
has finite image and so is polystable and thus $\X_{\Gamma}(G)=\hom(\Gamma,G)/G$.
Also, since the image $\rho(\Gamma)$ is compact it is contained in
a maximal compact subgroup of $G$. Fix a maximal compact $K$ in
$G$. Since all maximal compact subgroups are conjugate, for every
$\rho\in\hom(\Gamma,G)$ there exists $g\in G$ so $g\rho(\Gamma)g^{-1}\subset K$.
Thus, $i_{G}$ is actually surjective, and hence a homeomorphism (since
$\X_{\Gamma}(K)$ is compact).\end{proof}
\begin{example}
\label{exa:Finitely-generated-free}Finitely generated free groups
\cite{FlLa1} are flawed. This was the first example of this general
phenomenon. 
\end{example}

\begin{example}
\label{exa:Finitely-generated-nil}Finitely generated abelian groups
\cite{FlLa4} are flawed. Later, it was shown that finitely generated
nilpotent groups \cite{Berg} are flawed. Also, virtually nilpotent
Kähler groups \cite{BF-nil} are flawed (this includes finite groups). \end{example}
\begin{rem}

As shown in the Appendix of \cite{Berg}, there are examples of finitely
generated nilpotent $\Gamma$ and \textit{non-reductive} $\C$-groups
$G$ so that $\X_{\Gamma}(G)$ is \textit{not} homotopic to $\X_{\Gamma}(K)$
where $K$ is a maximal compact in $G$ (in fact they do not necessarily
have the same number of connected components). So working with the
class of reductive groups $G$ is necessary in the definition of flawed. \end{rem}
\begin{example}
A hyperbolic surface group $\Gamma$ (the fundamental group of a closed
orientable surface $\Sigma$ of genus $g\geq2$) is flawless. This
follows from Theorems 3.15 and 3.12 in \cite{FGN}, together with
the fact that $\X_{\Gamma}(G)$ is homeomorphic to the moduli space
of $G$-Higgs bundles of trivial topological type over a Riemann surface
with underlying topological surface $\Sigma$ (see also Remark \ref{rem:Higgs}
below).\end{example}

\begin{rem}
\label{rem:Higgs}When $\Gamma$ is the fundamental group of a K\"{a}hler
manifold $\Sigma$, $\X_{\Gamma}(G)$ is homeomorphic to $\mathcal{M}_{\Sigma}(G)$,
the moduli space of $G$-Higgs bundles over $\Sigma$ with vanishing
Chern classes; this is one instance of the so-called non-abelian Hodge
correspondence. Likewise, $\X_{\Gamma}(K)$ is homeomorphic to the
moduli space of flat holomorphic principal $G$-bundles over $\Sigma$;
denote it by $\mathcal{N}_{\Sigma}(G)$. In general, $\mathcal{M}_{\Sigma}(G)$
is a \textit{partial} compactification of the cotangent bundle $T^{*}(\mathcal{N}_{\Sigma}(G))$,
which deformation retracts to $\mathcal{N}_{\Sigma}(G)$. However,
one generally expects the boundary divisors in the partial compactification
to change the homotopy type of these moduli spaces. So if this does
not happen, one may be justified in saying that $\Gamma$ has a deficit.
This gives another point-of-view about the name ``flawed.'' 
\end{rem}
Using Kempf-Ness Theory (see \cite{Ne,Sch1,KN}), and Whitehead's
Theorem (\cite[Page 346]{Hatcher}, \cite{Whitehead2}), allows one
to obtain a certain ``numerical'' criterion for flawedness. The
setup is as follows.

Let $G$ be reductive $\C$-group with maximal compact subgroup $K$,
$V$ an affine variety with a rational action of $G$, and $V\quot G:=\mathrm{Spec}_{max}\left(\C[V]^{G}\right)$
the (affine) GIT quotient. By \cite[Lemma 1.1]{Ke}, we may assume
$V$ is equivariantly embedded as a closed subvariety of a (finite
dimensional) $\mathbb{C}$-vector space $\mathbb{V}$, via a representation
$G\to\mathrm{GL}(\mathbb{V})$. 

Let $\langle\ ,\ \rangle$ be a $K$-invariant Hermitian form on $\mathbb{V}$
with norm denoted by $\norm\ \norm$. Define, for any $v\in\mathbb{V}$
the mapping $p_{v}:G\to\mathbb{R}$ by $g\mapsto\norm g\cdot v\norm^{2}$.
It is shown in \cite{KN} that any critical point of $p_{v}$ is a
point where $p_{v}$ attains its minimum value. Moreover, the orbit
$G\cdot v$ is closed and $v\not=0$ if and only if $p_{v}$ attains
a minimum value.

The \textit{Kempf-Ness} set is the set $\mathcal{KN}$ of critical
points $\{v\in V\subset\mathbb{V}\ |\ (dp_{v})_{\id}=0\}$, where
$\id\in G$ is the identity. Since the Hermitian norm is $K$-invariant,
the Kempf-Ness is stable under the action of $K$. The following theorem
is proved in \cite{Sch1} making reference to \cite{Ne}. 
\begin{thm}[Schwarz-Neeman]
\label{schwarzneeeman} The composition $\mathcal{KN}\hookrightarrow V\to V\quot G$
is proper and induces a homeomorphism $\mathcal{KN}/K\to V\quot G$
where $V\quot G$ has the analytic topology. Moreover, $\mathcal{KN}\hookrightarrow V$
is a $K$-equivariant strong deformation retraction.
\end{thm}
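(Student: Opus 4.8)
The plan is to treat this as the Kempf--Ness theorem, organizing everything around the identification of $\mathcal{KN}$ with the minimal-norm points of the orbits. Writing $\lieg=\liek\oplus\liep$ for the Cartan decomposition of $\lieg=\Lie(G)$ (so $\liep=i\liek$ consists of Hermitian operators), a direct computation of $(dp_{v})_{\id}$ shows that, since $\langle\ ,\ \rangle$ is $K$-invariant, the derivative vanishes automatically along $\liek$ and the condition $v\in\mathcal{KN}$ is exactly the vanishing of $\xi\mapsto\mathrm{Re}\,\langle\xi\cdot v,v\rangle$ on $\liep$. Thus $\mathcal{KN}=\mu^{-1}(0)\cap V$ for the associated moment map $\mu$, and $v\in\mathcal{KN}$ means precisely that $\id$ is a critical point of $p_{v}$. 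I would then prove the homeomorphism statement and the SDR statement separately.

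For the induced map $\bar q\colon\mathcal{KN}/K\to V\quot G$, recall that the points of $V\quot G$ in the analytic topology correspond to the closed $G$-orbits in $V$, and that $\bar q$ is continuous, being induced by $\mathcal{KN}\hookrightarrow V\to V\quot G$. \emph{Surjectivity:} given a closed orbit $G\cdot v$ with $v\neq0$, the stated facts give a minimum of $p_{v}$ at some $g_{0}$; translating, $w=g_{0}\cdot v$ satisfies $(dp_{w})_{\id}=0$, so $w\in\mathcal{KN}$ lies on that orbit (the orbit of $0$ is trivial). \emph{Injectivity:} if $v,w\in\mathcal{KN}$ and $w=g\cdot v$, write the polar decomposition $g=k\exp(\xi)$ with $k\in K$ and $\xi\in\liep$; the function $t\mapsto p_{v}(\exp(t\xi))$ is convex with a critical point at $t=0$ (as $v\in\mathcal{KN}$) and, comparing norms at $t=0,1$ using that $\exp(\xi)\cdot v=k^{-1}\cdot w$ is also minimal, one forces this convex function to be constant, whence $\exp(\xi)$ fixes $v$ and $w=k\cdot v\in K\cdot v$. \emph{Homeomorphism:} it then suffices to show the composition $\mathcal{KN}\to V\quot G$ is proper; since $\mathcal{KN}\to\mathcal{KN}/K$ is proper ($K$ being compact), $\bar q$ is proper, and a proper continuous bijection between locally compact Hausdorff spaces is a homeomorphism. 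Properness holds because the minimal-norm function $m(v)=\inf_{g\in G}\norm g\cdot v\norm^{2}$ is $G$-invariant and descends to a continuous proper function on $V\quot G$ taking the value $\norm w\norm^{2}$ on $\bar q([w])$; hence the preimage of a compact set is closed in $\mathbb{V}$ and norm-bounded, so compact by Heine--Borel.

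For the SDR, I would build the retraction from the flow of the vector field on $V$ that is tangent to each orbit and equals the negative gradient (in the $K$-invariant metric) of $\rho(v)=\norm v\norm^{2}$ restricted to $G\cdot v$; this field points in the $\liep$-directions, strictly decreases $\rho$ off $\mathcal{KN}$, and vanishes exactly on $\mathcal{KN}$. Since $\rho$ decreases along the flow, each trajectory $\phi_{t}(v)$ remains in a fixed compact region, so the flow exists for all $t\geq0$; one then shows $\phi_{t}(v)$ converges as $t\to\infty$ to a point $\phi_{\infty}(v)\in\mathcal{KN}$ and that $(t,v)\mapsto\phi_{t}(v)$ extends continuously to $t=\infty$. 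Reparametrizing $[0,\infty]$ to $[0,1]$ yields a homotopy $H\colon V\times[0,1]\to V$ with $H_{0}=\mathrm{id}$, $H_{1}=\phi_{\infty}$ landing in $\mathcal{KN}$, and $H_{s}|_{\mathcal{KN}}=\mathrm{id}$ for all $s$; $K$-equivariance is automatic from the $K$-invariance of $\rho$ and of the metric.

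The main obstacle is the convergence and continuity claims for this flow: although $\rho$ decreases, proving that each trajectory converges to a single limit (rather than merely accumulating on the critical set) and that $\phi_{\infty}$ is continuous requires the real-analytic {\L}ojasiewicz gradient inequality together with uniform estimates near $\mathcal{KN}$, and this is the technical heart of Neeman's argument. The convexity step in the injectivity argument is the other delicate point, as it is precisely what rigidifies the $G$-orbit structure down to $K$-orbits; everything else reduces to the stated properties of $p_{v}$ and standard facts about proper maps.
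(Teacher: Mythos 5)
The paper does not actually prove this theorem: it is quoted verbatim from the literature, with the sentence ``The following theorem is proved in \cite{Sch1} making reference to \cite{Ne}'' serving as the entire justification. So there is no internal argument to compare against; what your sketch does is reconstruct the cited Kempf--Ness--Neeman--Schwarz argument itself, and it does so along the standard (correct) lines. The algebraic half of your sketch is essentially complete: the identification of $\KN$ with the zero set of the moment map (the derivative of $p_v$ vanishing automatically on $\liek$), surjectivity of $\KN/K\to V\quot G$ from the fact that closed orbits contain minimal vectors, and injectivity via polar decomposition $g=k\exp(\xi)$ plus convexity of $t\mapsto p_v(\exp(t\xi))$ are exactly the Kempf--Ness arguments. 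Your properness argument and the gradient-flow construction of the $K$-equivariant SDR are the Neeman/Schwarz half, and you correctly identify the genuinely hard points: convergence of each trajectory to a single limit and continuity of the limit map (this is where the real-analytic {\L}ojasiewicz-type estimates enter, and it is precisely the technical core of \cite{Ne}). Two details you assert rather than prove, and which a full write-up would have to address: first, the minimal-norm function $m(v)=\inf_{g}\norm g\cdot v\norm^{2}$ must be shown not merely $G$-invariant but constant on the fibers of $V\to V\quot G$ (fibers contain many orbits, so this uses that the infimum over an orbit equals the minimum over the unique closed orbit in its closure), and its continuity on the quotient is not automatic --- it is upper semicontinuous for free, which suffices for the boundedness step you need, but full continuity/properness requires an argument; second, for the homeomorphism conclusion one needs $V\quot G$ (analytic topology) to be locally compact Hausdorff, which holds since it is an affine variety, so your ``proper continuous bijection'' step is fine. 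In short: your proposal is a faithful and essentially correct sketch of the proof the paper outsources to \cite{Sch1} and \cite{Ne}, with the deep analytic inputs correctly attributed rather than glossed over.
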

In our setting, $V$ is $\hom(\Gamma,G)$ and $G$ acts by conjugation.
Choosing $r$ generators for $\Gamma$, we first embed: 
\[
\hom(\Gamma,G)\subset G^{r}\subset\mathbb{V},
\]
where $\mathbb{V}$ is an affine space where the conjugation action
of $G$ extends, and $\hom(\Gamma,G)\subset\mathbb{V}$ is a closed
$G$-stable subvariety.

Then, following \cite[Proposition 4.7]{CFLO1}, the \emph{Kempf-Ness
set} of $\hom(\Gamma,G)$ is: 
\begin{equation}
\KN_{\Gamma}:=\left\{ (g_{1},\cdots,g_{r})\in\hom(\Gamma,G)\ |\ \sum_{i=1}^{r}[g_{i}^{*},g_{i}]=0\right\} ,\label{eq:KN}
\end{equation}
where $g^{*}$ is the conjugate-transpose of $g$ (defined by a Cartan
involution), and $[g,h]$ denotes $gh-hg$ for $g,h\in G\subset\mathbb{V}$.
It also follows from this definition that $\KN_{\Gamma}$ is $K$-stable
under conjugation, and $\hom(\Gamma,K)\subset\KN_{\Gamma}$. 

We also need the intermediate space $\mathfrak{Y}_{\Gamma}(G):=\hom(\Gamma,G)/K$, which is also a finite CW complex. From the Schwarz-Neeman Theorem
(Theorem \ref{schwarzneeeman}), we conclude: 
\begin{thm}
\label{thm:SN} $\X_{\Gamma}(G)\cong\KN_{\Gamma}/K$ and the natural
inclusion $\KN_{\Gamma}/K\subset\Y_{\Gamma}(G)$ is a SDR. 
\end{thm}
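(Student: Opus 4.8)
The plan is to specialize the Schwarz--Neeman Theorem (Theorem \ref{schwarzneeeman}) to the case $V=\hom(\Gamma,G)$ equipped with the conjugation action of $G$, and then transport the resulting $K$-equivariant strong deformation retraction down to the $K$-quotients. The embedding $\hom(\Gamma,G)\subset\mathbb{V}$ constructed above realizes $\hom(\Gamma,G)$ as a closed $G$-stable subvariety of a $G$-representation, so the hypotheses of Theorem \ref{schwarzneeeman} apply verbatim, and under this specialization the abstract Kempf--Ness set $\mathcal{KN}$ becomes precisely $\KN_{\Gamma}$ as described in \eqref{eq:KN}.

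For the first assertion there is essentially nothing to do: Theorem \ref{schwarzneeeman} directly states that the composition $\KN_{\Gamma}\hookrightarrow\hom(\Gamma,G)\to\X_{\Gamma}(G)$ induces a homeomorphism $\KN_{\Gamma}/K\to\X_{\Gamma}(G)$ in the analytic topology, which is exactly the claim $\X_{\Gamma}(G)\cong\KN_{\Gamma}/K$.

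For the second assertion, the same theorem furnishes a $K$-equivariant strong deformation retraction $H\colon\hom(\Gamma,G)\times[0,1]\to\hom(\Gamma,G)$ with $H_{0}=\mathrm{id}$, with $H_{1}$ mapping onto $\KN_{\Gamma}$, and with $H_{t}$ fixing $\KN_{\Gamma}$ pointwise for all $t$. Since $K$ is compact, the quotient map $\pi\colon\hom(\Gamma,G)\to\Y_{\Gamma}(G)$ is open, and because $H$ is $K$-equivariant the formula $\overline{H}([x],t):=[H(x,t)]$ is well-defined on $\Y_{\Gamma}(G)\times[0,1]$. First I would verify continuity of $\overline{H}$ using that $\pi\times\mathrm{id}_{[0,1]}$ is itself a quotient map; this follows because $[0,1]$ is locally compact Hausdorff, so the product of a quotient map with the identity on $[0,1]$ remains a quotient map, and $\overline{H}\circ(\pi\times\mathrm{id})=\pi\circ H$ is continuous. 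Granting this, $\overline{H}$ satisfies $\overline{H}_{0}=\mathrm{id}_{\Y_{\Gamma}(G)}$, $\overline{H}_{1}$ retracts onto $\KN_{\Gamma}/K$, and $\overline{H}_{t}$ fixes $\KN_{\Gamma}/K$ throughout, which is precisely the asserted SDR.

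The main obstacle I anticipate is purely the descent-to-quotients continuity argument; everything else is an immediate translation of Theorem \ref{schwarzneeeman}. In carrying it out I would be careful about two points: that the retraction genuinely fixes $\KN_{\Gamma}$ for \emph{every} $t$ (the ``strong'' in SDR), so that its quotient fixes $\KN_{\Gamma}/K$ pointwise; and that the $K$-equivariance of $H$ — which holds because the Hermitian form defining $p_{v}$ is $K$-invariant and $\KN_{\Gamma}$ is correspondingly $K$-stable — is exactly what makes the passage to quotients well-defined in the first place.
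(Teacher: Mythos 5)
Your proposal is correct and follows essentially the same route as the paper, which deduces Theorem \ref{thm:SN} directly from the Schwarz--Neeman Theorem applied to $V=\hom(\Gamma,G)$ with the conjugation action. The paper treats the passage to $K$-quotients as immediate, whereas you spell out the descent argument (well-definedness via $K$-equivariance and continuity via the fact that $\pi\times\mathrm{id}_{[0,1]}$ is a quotient map); this is a valid filling-in of a detail the paper leaves implicit, not a different approach.
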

The following result then gives necessary and sufficient conditions
for flawedness. 
\begin{thm}
\label{thm:numeric} Let $\eta:\X_{\Gamma}(K)\to\mathfrak{Y}_{\Gamma}(G)$
be the natural inclusion. Then, the following are equivalent sentences: 
\begin{enumerate}
\item $\Gamma$ is flawed. 
\item $\eta$ induces isomorphisms $\pi_{n}(\X_{\Gamma}(K))\cong\pi_{n}(\mathfrak{Y}_{\Gamma}(G))$
for all $n\in\mathbb{N}$. 
\item The inclusion $\X_{\Gamma}(K)\subset\KN_{\Gamma}/K$ induces isomorphisms
\[
\pi_{n}(\X_{\Gamma}(K))\cong\pi_{n}(\KN_{\Gamma}/K)
\]
for all $n\in\mathbb{N}$. 
\end{enumerate}
\end{thm}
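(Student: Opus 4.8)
The plan is to exploit the fact that $\eta$ factors through the Kempf--Ness quotient, and then to upgrade homotopy-group isomorphisms to genuine strong deformation retractions using Whitehead's theorem together with the CW structure. Since $\hom(\Gamma,K)\subset\KN_{\Gamma}\subset\hom(\Gamma,G)$, with all three spaces $K$-stable, passing to $K$-quotients produces a factorization of subcomplex inclusions
\[
\X_{\Gamma}(K)\hookrightarrow\KN_{\Gamma}/K\hookrightarrow\Y_{\Gamma}(G),
\]
whose composite is exactly $\eta$. By Theorem~\ref{thm:SN} the second inclusion $\KN_{\Gamma}/K\hookrightarrow\Y_{\Gamma}(G)$ is a SDR, hence a homotopy equivalence, and so induces isomorphisms on every $\pi_{n}$.

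First I would establish (2)$\Leftrightarrow$(3). Applying $\pi_{n}$ to the displayed factorization and using functoriality, $\eta_{*}$ is the composite of the map $\pi_{n}(\X_{\Gamma}(K))\to\pi_{n}(\KN_{\Gamma}/K)$ induced by the first inclusion with the isomorphism $\pi_{n}(\KN_{\Gamma}/K)\to\pi_{n}(\Y_{\Gamma}(G))$ induced by the second. By the two-out-of-three property for isomorphisms, $\eta_{*}$ is an isomorphism for all $n$ precisely when the first map is, which is the content of (3).

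Next I would prove (1)$\Leftrightarrow$(3). Under the homeomorphism $\X_{\Gamma}(G)\cong\KN_{\Gamma}/K$ of Theorem~\ref{thm:SN}, which restricts to the identity on $\X_{\Gamma}(K)$ because both identifications are induced by $\hom(\Gamma,K)\subset\hom(\Gamma,G)$, the inclusion $i_{G}$ of \eqref{eq:iG} corresponds to the inclusion $\X_{\Gamma}(K)\subset\KN_{\Gamma}/K$. For (1)$\Rightarrow$(3): if $\Gamma$ is flawed then $i_{G}$ is a SDR, hence a homotopy equivalence, hence induces isomorphisms on all $\pi_{n}$, giving (3). For (3)$\Rightarrow$(1): assuming (3), the inclusion $\X_{\Gamma}(K)\subset\KN_{\Gamma}/K$ is a map of CW complexes inducing isomorphisms on all homotopy groups, so Whitehead's theorem makes it a homotopy equivalence; since it is a subcomplex inclusion (a cofibration), it is in fact the inclusion of a strong deformation retract. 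Composing this SDR with the SDR $\KN_{\Gamma}/K\hookrightarrow\Y_{\Gamma}(G)$ of Theorem~\ref{thm:SN} and transporting back through $\X_{\Gamma}(G)\cong\KN_{\Gamma}/K$ shows $i_{G}$ is a SDR; as $G$ is arbitrary, $\Gamma$ is flawed.

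The main obstacle is the final step (3)$\Rightarrow$(1): Whitehead's theorem alone yields only a homotopy equivalence, whereas flawedness demands an honest strong deformation retraction. Bridging this gap requires the refinement that a cofibration which is a homotopy equivalence is automatically the inclusion of a strong deformation retract, and therefore hinges on knowing that $\X_{\Gamma}(K)\subset\KN_{\Gamma}/K$ is a genuine subcomplex inclusion. This is where the CW structure recorded in \eqref{eq:iG} and in Theorem~\ref{thm:SN} is indispensable; the remaining implications are purely formal consequences of functoriality of $\pi_{n}$ and the two strong deformation retractions supplied by the Kempf--Ness--Schwarz--Neeman picture.
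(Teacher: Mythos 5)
Your proof is correct and follows essentially the same route as the paper's: factoring $\eta$ through $\KN_{\Gamma}/K$, using the Schwarz--Neeman SDR of Theorem \ref{thm:SN} to get the equivalence of (2) and (3) by functoriality, and combining Whitehead's theorem with the subcomplex (cofibration) structure to upgrade the homotopy equivalence in (3) to the SDR required for flawedness. If anything, you are slightly more explicit than the paper in noting that the homeomorphism $\X_{\Gamma}(G)\cong\KN_{\Gamma}/K$ restricts to the identity on $\X_{\Gamma}(K)$, a compatibility the paper leaves implicit.
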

\begin{proof}
This essentially follows from ideas in the proof of \cite[Theorem 4.10]{CFLO1}.

We make two technical notes. First, by \cite[Proposition 4.5]{FlLa3}
the inclusions $i_{G}$, $\eta$, and $\X_{\Gamma}(K)\subset\KN_{\Gamma}/K$
can all be taken to be ``cellular'', that is, there exist CW structures
on these spaces such that the inclusions map onto subcomplexes. Second,
when considering homotopy groups, we always are considering compatible
basepoints with respect to the given inclusions (this is relevant
since in this generality character varieties may not be connected).

We prove (3) is equivalent to (2). We have the commutative diagram
of inclusions: 
\begin{eqnarray*}
\X_{\Gamma}(K) & \stackrel{\eta}{\hookrightarrow} & \mathfrak{Y}_{\Gamma}(G)\\
\varphi\downarrow &  & \ \,\parallel\\
\KN_{\Gamma}/K & \stackrel{\iota}{\hookrightarrow} & \mathfrak{Y}_{\Gamma}(G).
\end{eqnarray*}

By Theorem \ref{thm:SN}, $\iota$ induces isomorphisms on all homotopy
groups. Since $\eta=\iota\circ\varphi$, the induced homomorphisms
in homotopy are $\eta_{*}=\iota_{*}\circ\varphi_{*}$. Thus, $\varphi$
induces isomorphisms on all homotopy groups if and only if $\eta$
does.

Now, we show that (1) and (3) are equivalent. Assume that $\varphi$
induces an isomorphism for all homotopy groups. Then Whitehead's Theorem
(\cite[Page 346]{Hatcher}) implies that $\varphi$ is an SDR since
$\varphi$ maps onto a subcomplex. The conclusion that $\Gamma$ is
flawed then follows from the identification $\KN_{\Gamma}/K\cong\X_{\Gamma}(G)$
in Theorem \ref{thm:SN}.

Conversely, if $\Gamma$ is flawed, then 
\[
\pi_{n}(\KN_{\Gamma}/K)\cong\pi_{n}(\X_{\Gamma}(G))\cong\pi_{n}(\X_{\Gamma}(K)),
\]
for all $n$. \end{proof}
\begin{rem}
For a general group $\Gamma$ (even for a free group) explicitly determining
the Kempf\textendash Ness sets appears to be a very difficult task.
Criterion (2) above avoids the determination of $\KN_{\Gamma}$, relying
only on the topology of the semialgebraic sets $\X_{\Gamma}(K)$ and
$\mathfrak{Y}_{\Gamma}(G)$. However, the corresponding homotopy groups
are also difficult to compute in general. 
\end{rem}
There is a stronger condition than being flawed that turns out to
be sometimes easier to prove in practice. 
\begin{defn}
We will say that $\Gamma$ is \textbf{strongly flawed} if there exists
a $K$-equivariant SDR from $\hom(\Gamma,G)$ to $\hom(\Gamma,K)$.

\end{defn}
\begin{thm}
\label{thm:strongimplies} If $\Gamma$ is strongly flawed, it is
flawed.
\end{thm}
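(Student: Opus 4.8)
The plan is to push the given $K$-equivariant strong deformation retraction down to the quotient by $K$, producing an SDR of $\Y_\Gamma(G)$ onto $\X_\Gamma(K)$, and then to invoke the numerical criterion of Theorem \ref{thm:numeric}. Fix a reductive $\C$-group $G$ with maximal compact $K$, and let $H \colon \hom(\Gamma,G) \times [0,1] \to \hom(\Gamma,G)$ be a $K$-equivariant SDR of $\hom(\Gamma,G)$ onto $\hom(\Gamma,K)$; thus $H(\,\cdot\,,0) = \mathrm{id}$, the map $H(\,\cdot\,,1)$ lands in $\hom(\Gamma,K)$, $H$ fixes $\hom(\Gamma,K)$ pointwise for all time, and $H(k\cdot x, t) = k\cdot H(x,t)$ for every $k \in K$ and $x \in \hom(\Gamma,G)$, where the action is conjugation.

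First I would descend $H$ through the quotient map $q \colon \hom(\Gamma,G) \to \Y_\Gamma(G)$. The composite $q \circ H$ is constant on $K$-orbits in its first argument: if $x' = k\cdot x$, then $H(x',t) = k\cdot H(x,t)$ by equivariance, so $q(H(x',t)) = q(H(x,t))$. Because $[0,1]$ is locally compact Hausdorff, $q \times \mathrm{id}_{[0,1]}$ is again a quotient map, and hence $q \circ H$ factors through a continuous map $\bar H \colon \Y_\Gamma(G) \times [0,1] \to \Y_\Gamma(G)$. The defining properties of $H$ transfer verbatim to $\bar H$: it is the identity at time $0$, at time $1$ it maps into $\hom(\Gamma,K)/K = \X_\Gamma(K)$, and it fixes $\X_\Gamma(K)$ for all time. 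Thus $\bar H$ realizes the inclusion $\eta \colon \X_\Gamma(K) \hookrightarrow \Y_\Gamma(G)$ as a strong deformation retraction.

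An SDR is in particular a homotopy equivalence respecting the inclusion, so $\eta$ induces isomorphisms $\pi_n(\X_\Gamma(K)) \cong \pi_n(\Y_\Gamma(G))$ for all $n$ (and on each connected component, so the compatible-basepoint convention causes no trouble). This is precisely condition (2) of Theorem \ref{thm:numeric}, and its equivalence with condition (1) gives that $\Gamma$ is flawed. Since $G$ was an arbitrary reductive $\C$-group, the conclusion holds for all $G$.

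The only real subtlety is the continuity of $\bar H$, which relies on $q \times \mathrm{id}_{[0,1]}$ being a quotient map; this is where local compactness of the unit interval enters, while the $K$-equivariance of $H$ is exactly what makes $q \circ H$ well defined on orbits. Everything else is a routine transcription of the deformation-retraction axioms across the quotient.
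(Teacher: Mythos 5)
Your proof is correct and takes essentially the same route as the paper: descend the $K$-equivariant SDR to the quotient to obtain an SDR of $\Y_{\Gamma}(G)$ onto $\X_{\Gamma}(K)$, conclude that $\eta$ induces isomorphisms on all homotopy groups, and invoke Theorem \ref{thm:numeric}. The paper asserts the descent step in one word (``hence''), and your justification via the equivariance of $H$ and the fact that $q\times\mathrm{id}_{[0,1]}$ is a quotient map is a correct filling-in of that routine detail.
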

\begin{proof}
If $\Gamma$ is strongly flawed, then there exists a $K$-equivariant
SDR from $\hom(\Gamma,G)$ onto $\hom(\Gamma,K)$ and hence $\Y_{\Gamma}(G)$
strong deformation retracts onto $\X_{\Gamma}(K)$. Therefore, the
inclusion $\eta:\X_{\Gamma}(K)\to\Y_{\Gamma}(G)$ determines isomorphisms
$\pi_{n}(\X_{\Gamma}(K))\cong\pi_{n}(\Y_{\Gamma}(G))$. Thus, by Theorem
\ref{thm:numeric}, $\Gamma$ is flawed. \end{proof}
\begin{rem}
The above theorem, first used in \cite{FlLa1}, has been used to establish the flawedness of just about all groups that are now known to be flawed. Indeed, finitely generated free groups \cite{FlLa1} and finitely generated nilpotent groups \cite{Berg} (Examples \ref{exa:Finitely-generated-free} and \ref{exa:Finitely-generated-nil}) are in fact strongly flawed. The proof in \cite{FlLa4} does not establish the condition of strongly flawed; but from \cite{Berg}, and
as a corollary to Theorem \ref{thm:main}, we now have that finitely
generated nilpotent groups are strongly flawed.
\end{rem}

\begin{rem}
Although we will not use it here, it is natural to say that $\Gamma$ is ``weakly flawed'' if $\pi_{n}(\X_{\Gamma}(G))\cong\pi_{n}(\X_{\Gamma}(K))$
for all $n$.  The fact that flawed implies weakly flawed is obvious since a SDR between spaces implies those spaces are homotopic and hence weakly
homotopic. We know of no examples of weakly flawed groups that are not flawed. 
\end{rem}

\section{Bootstrapping and Extending Flawedness}

\label{boot-section}Consider again the natural inclusion $i_{G}:\X_{\Gamma}(K)\to\X_{\Gamma}(G)$.
One often shows that a group $\Gamma$ is flawed after proving that
$i_{G}$ is a SDR for $G=\SL(n,\C)$, for all $n$, and the general
proof for reductive $G$ is usually simply an adaptation of the $\SL(n,\C)$
case.

\subsection{Bootstrapping Flawedness from the Simple Adjoint Case}

In this subsection, and the next one, we prove that to establish flawedness
it is sufficient, in some cases, to consider only simple groups $G$.
\begin{defn}
We will say that $\Gamma$ is \textit{$G$-flawed} if $i_{G}:\X_{\Gamma}(K)\to\X_{\Gamma}(G)$
is a strong deformation retraction for a fixed $G$ and any maximal
compact $K\subset G$. 
\end{defn}
We start by considering the case when $G$ is a connected reductive
abelian group. It is well known that these are precisely the (affine)
algebraic tori which, over $\mathbb{C}$, are the groups of the form
$T\cong(\mathbb{C}^{*})^{n}$ for some $n\in\mathbb{N}$. 
\begin{prop}
\label{prop:T-flawed}Let $\Gamma$ be a finitely generated group
and $T$ be an algebraic torus. Then $\Gamma$ is $T$-flawed.\end{prop}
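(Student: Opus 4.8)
The plan is to exploit the fact that abelianness of $T$ trivializes the conjugation action, collapsing the GIT quotient and reducing the statement to an elementary radial deformation retraction.

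First I would observe that since $T\cong(\C^{*})^{n}$ is abelian, conjugation is trivial: every orbit is a single point, so every homomorphism is polystable and the quotient maps collapse. Concretely $\X_{\Gamma}(T)=\hom(\Gamma,T)\quot T=\hom(\Gamma,T)$, and for the maximal compact $K\cong(S^{1})^{n}$ we similarly get $\X_{\Gamma}(K)=\hom(\Gamma,K)/K=\hom(\Gamma,K)$. Thus $i_{T}$ is literally the inclusion $\hom(\Gamma,K)\hookrightarrow\hom(\Gamma,T)$, and it suffices to produce a strong deformation retraction between these two spaces; since the $K$-action is trivial such a retraction is automatically $K$-equivariant, so the argument in fact shows $\Gamma$ is \emph{strongly flawed} for $T$.

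Next I would use that homomorphisms into the abelian groups $T$ and $K$ factor through the abelianization, giving $\hom(\Gamma,T)\cong\hom(\Gamma^{\mathrm{ab}},T)$ and likewise for $K$. By the structure theorem for finitely generated abelian groups, $\Gamma^{\mathrm{ab}}\cong\Z^{k}\oplus F$ with $F$ finite abelian. Using $\hom(A\oplus B,-)=\hom(A,-)\times\hom(B,-)$ together with $\hom(\Z,(\C^{*})^{n})=(\C^{*})^{n}$, I obtain a homeomorphism $\hom(\Gamma,T)\cong(\C^{*})^{nk}\times D$, where $D:=\hom(F,(\C^{*})^{n})$ is a finite discrete set (the image of the finite group $F$ lands in roots of unity). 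Repeating the computation with $S^{1}$ in place of $\C^{*}$ yields $\hom(\Gamma,K)\cong(S^{1})^{nk}\times D$ with the \emph{same} finite factor $D$, because $\hom(F,\C^{*})=\hom(F,S^{1})$ (roots of unity lie in $S^{1}$); these identifications are compatible with $i_{T}$, which becomes $(S^{1})^{nk}\times D\hookrightarrow(\C^{*})^{nk}\times D$.

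Finally, on each connected component (a copy of $(\C^{*})^{nk}$, with $m:=nk$) I would apply the standard radial retraction coordinatewise, fixing the discrete coordinate in $D$: define $H\colon(\C^{*})^{m}\times[0,1]\to(\C^{*})^{m}$ by
\[
H\big((z_{1},\dots,z_{m}),t\big)=\big(z_{1}|z_{1}|^{-t},\dots,z_{m}|z_{m}|^{-t}\big).
\]
At $t=0$ this is the identity, at $t=1$ it lands in $(S^{1})^{m}$, and any point already in $(S^{1})^{m}$ (where each $|z_{i}|=1$) is fixed throughout; hence $H$ is a strong deformation retraction of $\hom(\Gamma,T)$ onto $\hom(\Gamma,K)$, proving $\Gamma$ is $T$-flawed. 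There is essentially no serious obstacle here: the only point demanding care is the identification of the torsion part, namely checking that $\hom(F,\C^{*})$ and $\hom(F,S^{1})$ coincide as finite sets and index the same components on both sides, after which continuity and the strong-retraction property of $H$ are immediate. (One could alternatively verify criterion (2) of Theorem \ref{thm:numeric} directly, but the explicit retraction has the advantage of simultaneously establishing strong flawedness.)
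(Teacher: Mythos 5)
Your proof is correct, and its second half takes a genuinely different, more elementary route than the paper's. Both arguments begin identically: factor every homomorphism through the abelianization $\Gamma_{Ab}$ and use triviality of the conjugation action to identify $\X_{\Gamma}(T)$ with $\hom(\Gamma,T)$. At that point the paper simply invokes \cite{FlLa4} (finitely generated abelian groups are flawed), applied to $\Gamma_{Ab}$ with target $T$, whereas you avoid the citation entirely: you use the structure theorem to write $\Gamma_{Ab}\cong\Z^{k}\oplus F$, compute $\hom(\Gamma,T)\cong(\C^{*})^{nk}\times D$ and $\hom(\Gamma,K)\cong(S^{1})^{nk}\times D$ with the \emph{same} finite discrete set $D$ (since $\hom(F,\C^{*})=\hom(F,S^{1})$, all images being roots of unity), and then exhibit the radial strong deformation retraction explicitly. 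What the paper's route buys is brevity, deferring all work to an existing theorem. What your route buys is self-containedness --- for an abelian target the full strength of \cite{FlLa4} is overkill, since $\hom(\Gamma,T)$ is just a disjoint union of complex tori --- and, more substantively, the explicit observation that the retraction is automatically $K$-equivariant because the action is trivial, i.e.\ that $\Gamma$ is \emph{strongly} $T$-flawed. That strengthening is not cosmetic: in the proof of Theorem \ref{thm:Ad(G)} the paper asserts that $\Gamma$ is strongly $(T/F)$-flawed while citing Proposition \ref{prop:T-flawed}, which formally states only $T$-flawedness, and strong flawedness is the hypothesis actually consumed by Lemma \ref{lem:product}; your version supplies exactly that form.
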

\begin{proof}
Let $\Gamma_{Ab}:=\Gamma/[\Gamma,\Gamma]$ be the abelianization of
$\Gamma$, with the canonical epimorphism $\pi:\Gamma\to\Gamma_{Ab}$.
Since $T$ is a commutative group, every representation $\rho:\Gamma\to T$
factors as $\rho=\rho_{Ab}\circ\pi$ for a unique $\rho_{Ab}:\Gamma_{Ab}\to T$.
This shows that the natural inclusion 
\[
\hom(\Gamma_{Ab},T)\subset\hom(\Gamma,T)
\]
is actually an isomorphism of algebraic varieties. Since the conjugation
action by $T$ is trivial, we deduce the isomorphism of character
varieties: 
\[
\X_{\Gamma_{Ab}}(T)\cong\X_{\Gamma}(T).
\]
Finally, since by \cite{FlLa4} every finitely generated abelian group
$\Gamma_{Ab}$ is flawed (and naturally $\X_{\Gamma_{Ab}}(T_{K})\cong\X_{\Gamma}(T_{K})$,
for a maximal compact $T_{K}\subset T$), the same holds for $\Gamma$. \end{proof}
\begin{lem}
\label{lem:product}Let $G$ and $H$ be reductive $\mathbb{C}$-groups.
If $\Gamma$ is strongly $G$-flawed and strongly $H$-flawed, it
is $($strongly$)$ $G\times H$-flawed. \end{lem}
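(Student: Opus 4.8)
The plan is to exploit the fact that the representation variety of a direct product factors as a product of representation varieties, and then to take the ``product'' of the two given equivariant strong deformation retractions. First I would record the natural identification
\[
\hom(\Gamma,G\times H)\;\cong\;\hom(\Gamma,G)\times\hom(\Gamma,H),
\]
which sends a homomorphism $\rho$ to the pair $(\pi_G\circ\rho,\pi_H\circ\rho)$ of its compositions with the two projections; this is an isomorphism of affine $G\times H$-varieties, and it is compatible with conjugation in the sense that the diagonal $G\times H$-action on the left corresponds to the product of the conjugation actions on the right. Next I would observe that every maximal compact subgroup of $G\times H$ has the form $K\times L$ for maximal compacts $K\subset G$ and $L\subset H$: the two projections of a maximal compact are compact, hence sit inside maximal compacts $K,L$, forcing the subgroup into the compact group $K\times L$, and maximality then gives equality. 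Under the identification above this yields $\hom(\Gamma,K\times L)\cong\hom(\Gamma,K)\times\hom(\Gamma,L)$.

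With these identifications in hand, suppose $R^{t}\colon\hom(\Gamma,G)\to\hom(\Gamma,G)$ is a $K$-equivariant SDR onto $\hom(\Gamma,K)$ and $S^{t}\colon\hom(\Gamma,H)\to\hom(\Gamma,H)$ is an $L$-equivariant SDR onto $\hom(\Gamma,L)$, where $t\in[0,1]$. I would then define the product homotopy
\[
\Phi^{t}:=R^{t}\times S^{t}\colon \hom(\Gamma,G)\times\hom(\Gamma,H)\longrightarrow\hom(\Gamma,G)\times\hom(\Gamma,H),
\]
using the shared parameter $t$. It remains to verify the defining properties of a $(K\times L)$-equivariant SDR, each of which passes through the product coordinatewise: $\Phi^{0}=\mathrm{id}$ since $R^{0}$ and $S^{0}$ are; $\Phi^{1}$ retracts onto $\hom(\Gamma,K)\times\hom(\Gamma,L)$ since $R^{1},S^{1}$ do; $\Phi^{t}$ fixes $\hom(\Gamma,K)\times\hom(\Gamma,L)$ pointwise for all $t$ because $R^{t},S^{t}$ fix their respective target subspaces pointwise; and $(K\times L)$-equivariance of $\Phi^{t}$ follows directly from the $K$- and $L$-equivariance of the factors, since $\Phi^{t}((k,l)\cdot(\rho,\sigma))=(k\cdot R^{t}(\rho),\,l\cdot S^{t}(\sigma))=(k,l)\cdot\Phi^{t}(\rho,\sigma)$. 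Continuity of $\Phi^{t}$ is immediate from continuity of $R^{t}$ and $S^{t}$. Transporting everything through the identifications of the first paragraph shows that $\hom(\Gamma,G\times H)$ admits a $(K\times L)$-equivariant SDR onto $\hom(\Gamma,K\times L)$, i.e.\ $\Gamma$ is strongly $G\times H$-flawed. The parenthetical assertion that $\Gamma$ is also $G\times H$-flawed then follows by the argument of Theorem \ref{thm:strongimplies}: quotienting the equivariant SDR by $K\times L$ makes the inclusion $\eta$ induce isomorphisms on all homotopy groups, so the numerical criterion of Theorem \ref{thm:numeric} applies.

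I do not anticipate a genuine obstacle here, as the construction is formal once the two foundational identifications are in place. The one point requiring care is the verification that the natural map $\hom(\Gamma,G\times H)\to\hom(\Gamma,G)\times\hom(\Gamma,H)$ is not merely a bijection but an equivariant isomorphism respecting the algebraic, analytic, and $(K\times L)$-equivariant structures simultaneously, so that the product homotopy genuinely descends to the correct quotients; and, relatedly, the fact that maximal compacts of a product are products, which guarantees that the retraction we build lands on the subspace $\hom(\Gamma,K\times L)$ for an \emph{arbitrary} maximal compact of $G\times H$, as demanded by the definition of $G\times H$-flawed. Both are standard, but they are the load-bearing facts of the argument.
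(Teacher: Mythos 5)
Your proof is correct and follows essentially the same route as the paper's: identify $\hom(\Gamma,G\times H)$ with $\hom(\Gamma,G)\times\hom(\Gamma,H)$ and take the product of the two equivariant SDRs, which is automatically a $(K\times L)$-equivariant SDR onto $\hom(\Gamma,K\times L)$. Your added verification that every maximal compact of $G\times H$ is a product of maximal compacts is a detail the paper leaves implicit, but it does not change the argument.
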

\begin{proof}
Let $K_{G}$ and $K_{H}$ be maximal compact subgroups of $G$ and
$H$, respectively. By assumption, there is a $K_{G}$-equivariant
SDR from $\hom(\Gamma,G)$ onto $\hom(\Gamma,K_{G})$ and a $K_{H}$-equivariant
SDR from $\hom(\Gamma,H)$ onto $\hom(\Gamma,K_{H})$. Since products
of SDRs are SDRs, the natural identification 
\[
\hom(\Gamma,G\times H)=\hom(\Gamma,G)\times\hom(\Gamma,H)
\]
defines a $(K_{G}\times K_{H})$-equivariant SDR from $\hom(\Gamma,G\times H)$
onto $\hom(\Gamma,K_{G}\times K_{H})$, as wanted (and conjugation
by $G\times H$, acts factor-wise). 
\end{proof}
Let $G$ be a reductive $\mathbb{C}$-group, and $F$ be a finite
central subgroup of the maximal compact $K\subset G$. The projection
$\pi:G\to G/F$ induces a morphism of algebraic varieties: 
\[
\hom(\Gamma,G)\to\hom(\Gamma,G/F).
\]
This map is not generally surjective, but it surjects onto a union
of path-connected components of $\hom(\Gamma,G/F)$; see \cite[Lemma 2.2]{G3}
and \cite[Thm. 4.1]{Cul}.
\begin{prop}
\label{prop:lift} If $\hom(\Gamma,K/F)$ is a $K/F$-equivariant
SDR of $\hom(\Gamma,G/F)$, then there exists a $K$-equivariant SDR
from $\hom(\Gamma,G)$ onto $\hom(\Gamma,K)$. \end{prop}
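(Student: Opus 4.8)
The plan is to realize the induced map $p\colon\hom(\Gamma,G)\to\hom(\Gamma,G/F)$ as a covering map onto its image and then lift the given deformation retraction by the homotopy lifting property. Since $F$ is finite and central, $\pi\colon G\to G/F$ is a finite covering homomorphism, and the finite abelian group $\hom(\Gamma,F)$ acts on $\hom(\Gamma,G)$ by pointwise central multiplication, $(c\cdot\rho)(\gamma)=c(\gamma)\rho(\gamma)$; one checks $c\cdot\rho$ is again a homomorphism because $F$ is central. This action is free, and its orbits are exactly the fibers of $p$ over $\ima(p)$, since any two lifts of a fixed $\bar\rho$ differ by an element of $\hom(\Gamma,F)$. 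As $\pi^{r}\colon G^{r}\to(G/F)^{r}$ is proper (a quotient by a finite group is closed with finite fibers) and $\hom(\Gamma,G)\subset G^{r}$ is closed, $p$ is proper, hence closed, so it identifies $\ima(p)$ with the quotient $\hom(\Gamma,G)/\hom(\Gamma,F)$; a free action of a finite group on a Hausdorff space is properly discontinuous, so $p\colon\hom(\Gamma,G)\to\ima(p)$ is a covering map. By \cite[Lemma 2.2]{G3} and \cite[Thm.~4.1]{Cul}, $\ima(p)$ is a union of path-connected components of $\hom(\Gamma,G/F)$, and since each $\bar H(\cdot,t)$ is homotopic to the identity it preserves every path component; hence the given SDR restricts to $\bar H\colon\ima(p)\times[0,1]\to\ima(p)$.

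Next I would lift. Applying the homotopy lifting property of the covering $p$ to the homotopy $(x,t)\mapsto\bar H(p(x),t)$ with initial lift $\mathrm{id}$ of $p$, there is a unique $H\colon\hom(\Gamma,G)\times[0,1]\to\hom(\Gamma,G)$ with $H(\cdot,0)=\mathrm{id}$ and $p\circ H(x,t)=\bar H(p(x),t)$. To see $H$ retracts onto $\hom(\Gamma,K)$, note that $\pi^{-1}(K/F)=K$ because $F\subset K$, whence $p^{-1}(\hom(\Gamma,K/F))=\hom(\Gamma,K)$. Since $p(H(x,1))=\bar H(p(x),1)\in\hom(\Gamma,K/F)$, the time-one map $H(\cdot,1)$ lands in $\hom(\Gamma,K)$. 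For the relative property, if $x\in\hom(\Gamma,K)$ then $p(x)\in\hom(\Gamma,K/F)$ is fixed by $\bar H$ for all $t$, so $t\mapsto\bar H(p(x),t)$ is the constant path; its unique lift starting at $x$ is constant, giving $H(x,t)=x$ for all $t$. Thus $H$ is a strong deformation retraction of $\hom(\Gamma,G)$ onto $\hom(\Gamma,K)$.

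Finally, for $K$-equivariance I would again invoke uniqueness of lifts. Conjugation intertwines $p$ with the projection $K\to K/F$, i.e. $p(k\cdot x)=\bar k\cdot p(x)$ with $\bar k=\pi(k)$, and $\bar H$ is $K/F$-equivariant. Hence for fixed $k\in K$ both maps $(x,t)\mapsto H(k\cdot x,t)$ and $(x,t)\mapsto k\cdot H(x,t)$ cover the same homotopy $(x,t)\mapsto\bar H(\bar k\cdot p(x),t)$ and agree at $t=0$ with $x\mapsto k\cdot x$; by the uniqueness clause of the homotopy lifting property they coincide, so $H(k\cdot x,t)=k\cdot H(x,t)$, as required.

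The main obstacle, I expect, is the first paragraph: verifying rigorously that $p$ is a genuine covering map onto the union of components $\ima(p)$, rather than merely a finite-to-one proper surjection. Once the covering-space structure and the identity $p^{-1}(\hom(\Gamma,K/F))=\hom(\Gamma,K)$ are secured, both the SDR property and its $K$-equivariance follow formally from the existence and uniqueness clauses of homotopy lifting.
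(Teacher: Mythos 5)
Your proof is correct, and its overall architecture is the same as the paper's: restrict the given SDR to the union of path components $\ima(p)=\hom^{*}(\Gamma,G/F)$ hit by $p$, lift the resulting homotopy through $p$ using the homotopy lifting property, deduce the relative (pointwise-fixing) property from discreteness of the fibers via unique path lifting, and obtain $K$-equivariance from the lifted diagram. The genuine difference is in how the covering-map property of $p$ is secured, which you rightly identified as the crux. The paper gets it by citation: $\hom(\Gamma,G)$ is locally path-connected by \cite[Lemma 3.5]{Lawton-Ramras}, so $\pi$ is a covering map by \cite[Lemma 2.2]{G3} (hence a Serre fibration). You instead prove it directly: the finite abelian group $\hom(\Gamma,F)$ acts freely by central multiplication with orbits equal to the fibers of $p$, properness of $\pi^{r}$ makes $p$ closed so that $\ima(p)$ is homeomorphic to the orbit space, and a free action of a finite group on a Hausdorff space is properly discontinuous, so the quotient map is a covering. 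This is self-contained and avoids the local path-connectedness input entirely; the paper's route is shorter given the references, and its appeal to \cite{G3} and \cite{Cul} is still needed by both arguments to know that $\ima(p)$ is a union of components (which you also use, to restrict $\bar H$). A second, smaller difference: your $K$-equivariance argument, via uniqueness of lifts applied to the two lifts $(x,t)\mapsto H(k\cdot x,t)$ and $(x,t)\mapsto k\cdot H(x,t)$ of the same homotopy, makes explicit what the paper compresses into ``the lifted homotopy is also $K$-equivariant, by commutativity of the diagram''; the two arguments are the same in substance, but yours is the more rigorous rendering of that step.
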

\begin{proof}
Denote by $\hom^{*}(\Gamma,G/F)\subset\hom(\Gamma,G/F)$ the union
of components so that 
\[
\pi:\hom(\Gamma,G)\to\hom^{*}(\Gamma,G/F)
\]
is surjective and consider the commutative diagram: 
\begin{equation}
\xymatrix{
\hom(\Gamma,K) \ar[d]_{\pi_K} \ar@{^{(}->}[r] & \hom(\Gamma,G)\ar[d]_{\pi}\\
\hom^{*}(\Gamma,K/F)  \ar@{^{(}->}[r]& \hom^{*}(\Gamma,G/F),}\label{eq:square}
\end{equation}
where $\hom^{*}(\Gamma,K/F):=\hom(\Gamma,K/F)\cap\hom^{*}(\Gamma,G/F)$,
and $\pi_{K}$ is the restriction of $\pi$ to $\hom(\Gamma,K)$.
It is easy to check that, in fact, $\pi_{K}^{-1}(\hom(\Gamma,K/F))=\hom(\Gamma,K)$.

In \cite[Lemma 3.5]{Lawton-Ramras} it is shown that the varieties
$\hom(\Gamma,G)$ are locally path-connected, so that $\pi$ is a
covering map (by \cite[Lemma 2.2]{G3}), in particular a Serre fibration,
and thus has the homotopy lifting property for maps from arbitrary
CW complexes.

By assumption there is a SDR $\hom(\Gamma,K/F)\hookrightarrow\hom(\Gamma,G/F)$.
This naturally induces a SDR on path-connected components, so on the
bottom of \eqref{eq:square}, we have a strong deformation retraction,
which is a homotopy: $H:I\times\hom^{*}(\Gamma,G/F)\to\hom^{*}(\Gamma,G/F)$
from the identity to a retract $\hom^{*}(\Gamma,G/F)\to\hom^{*}(\Gamma,K/F)$.
By pre-composing with $id_{I}\times\pi$, we get the bottom map in the
following diagram:
\begin{equation}
\xymatrix{  &\hom(\Gamma,G)\ar[d]_{\pi} \\
I\times \hom(\Gamma, G)\ar@{-->}[ur]^{\tilde{H}} \ar[r] & \hom^*(\Gamma,G/F),}\label{eq:homotopy}
\end{equation} which can be lifted to the diagonal arrow, yielding a homotopy $\tilde{H}$ which is a (weak) deformation retraction.

It remains to show that this deformation retraction is an SDR, that is, $\tilde{H}(t,\rho)=\rho$ for all $t$, and all $\rho\in \hom(\Gamma,K)\subset \hom(\Gamma,G)$. By commutativity, we have: 
\[
\tilde{H}(t,\rho)\in\pi^{-1}(H\circ(id_{I}\times\pi)(t,\rho))=\pi^{-1}(H(t,\pi(\rho)))=\pi^{-1}(\pi(\rho)).
\]
Hence, since the fiber is discrete, $\tilde{H}(0,\rho)=\rho$ for all $\rho\in\hom(\Gamma,G)$, and continuity, we conclude that $\tilde{H}(t,\rho)=\rho$ for all $t$ and $\rho\in \hom(\Gamma, K)$, as required. 

Finally, since the multiplication action by $F$ commutes with the
conjugation action by $G$ (because $F$ is central) and since the
conjugation actions of $G/F$ and of $G$ are the same, it is clear
that $\pi$ is $K$-equivariant and restricts to $\pi_{K}$. By assumption,
the bottom homotopy in \eqref{eq:homotopy} is both $K$- and $K/F$-equivariant,
so the lifted homotopy is also $K$-equivariant, by commutativity
of the diagram.
\end{proof}

\begin{rem}
The discreteness of the fiber in the previous argument was essential in showing the SDR lifted to an other SDR.  Alternatively, one could use the fact that the inclusion $\hom(\Gamma, K)\hookrightarrow\hom(\Gamma, G)$ is cofibration since $\hom(\Gamma, K)$ is compact (see \cite{Strom1,Strom2}).
\end{rem}

\begin{thm}
\label{thm:Ad(G)} Let $G$ be a reductive group and $\Ad(G)$ be
its adjoint group. If $\Gamma$ is strongly $\Ad(G)$-flawed, then
it is $($strongly$)$ $G$-flawed. \end{thm}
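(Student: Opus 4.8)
The plan is to reduce the reductive group $G$ to a product of a torus and its adjoint group after quotienting by a suitable \emph{finite} central subgroup, and then to lift flawedness using Proposition \ref{prop:lift}. Write $S := Z(G)^{0}$ for the connected center (a central torus) and $G_{\mathrm{der}} := [G,G]$ for the derived group (which is semisimple), so that $G = S\cdot G_{\mathrm{der}}$. The center $Z(G_{\mathrm{der}})$ is finite and, being contained in $Z(G)$, is a finite central subgroup of $G$ lying in the maximal compact $K$ (finite central elements lie in every maximal compact subgroup); this puts us in the situation of Proposition \ref{prop:lift}.

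The key structural observation I would establish is the isomorphism
\[
G/Z(G_{\mathrm{der}}) \;\cong\; (S/F) \times \Ad(G), \qquad F := S \cap G_{\mathrm{der}},
\]
where $S/F$ is again a torus and $\Ad(G) = \Ad(G_{\mathrm{der}}) = G_{\mathrm{der}}/Z(G_{\mathrm{der}})$. To see this I would consider the surjection $\mu \colon S \times G_{\mathrm{der}} \to G/Z(G_{\mathrm{der}})$ given by $(s,h)\mapsto sh \bmod Z(G_{\mathrm{der}})$; a direct computation shows that $F \subseteq Z(G_{\mathrm{der}})$ and that $\ker\mu = F \times Z(G_{\mathrm{der}})$ splits as a product, whence $\mu$ descends to the claimed isomorphism. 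This is the step that handles the central torus, which cannot be removed by Proposition \ref{prop:lift} directly since the kernel of $G\to\Ad(G)$ is infinite; the trick is to collapse only the finite part $Z(G_{\mathrm{der}})$ and let the torus survive as a factor.

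With this in hand the proof assembles quickly. Since $S/F$ is a torus, $\Gamma$ is strongly $(S/F)$-flawed: the conjugation action is trivial and the fiberwise polar retraction $(\C^{*})^{k} \to (S^{1})^{k}$ furnishes a (trivially equivariant) $K$-equivariant SDR, strengthening Proposition \ref{prop:T-flawed} to the strong sense. By hypothesis $\Gamma$ is strongly $\Ad(G)$-flawed, so Lemma \ref{lem:product} gives that $\Gamma$ is strongly $\bigl((S/F)\times\Ad(G)\bigr)$-flawed, that is, strongly $(G/Z(G_{\mathrm{der}}))$-flawed. Finally, applying Proposition \ref{prop:lift} with the finite central subgroup $Z(G_{\mathrm{der}}) \subset G$ produces a $K$-equivariant SDR from $\hom(\Gamma,G)$ onto $\hom(\Gamma,K)$, so $\Gamma$ is strongly $G$-flawed (hence $G$-flawed, by the argument of Theorem \ref{thm:strongimplies} applied to the fixed $G$). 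I expect the main obstacle to be verifying the product decomposition of $G/Z(G_{\mathrm{der}})$ and correctly tracking the maximal compact subgroups through each isomorphism, so that the equivariance hypotheses of Lemma \ref{lem:product} and Proposition \ref{prop:lift} are genuinely met.
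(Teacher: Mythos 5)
Your proposal is correct, but it takes a genuinely different (and slightly leaner) route than the paper. The paper's proof starts from the central isogeny $G\cong T\times_{F}DG$ with $F=T\cap DG$, passes to $G/F\cong T/F\times DG/F$, and then needs \emph{two} applications of Proposition \ref{prop:lift}: first to lift the hypothesis from $\Ad(G)\cong(DG/F)/F_{1}$ up to the intermediate semisimple group $DG/F$ (which is generally not of adjoint type), and then again to lift from $G/F$ back to $G$. You instead quotient by the full finite center $Z(G_{\mathrm{der}})$ of the derived group; your kernel computation is right --- since $F=S\cap G_{\mathrm{der}}\subseteq Z(G)\cap G_{\mathrm{der}}=Z(G_{\mathrm{der}})$, one checks directly that $\ker\mu=F\times Z(G_{\mathrm{der}})$ splits --- and it yields $G/Z(G_{\mathrm{der}})\cong(S/F)\times\Ad(G)$ in one stroke (using also $\Ad(G)\cong G_{\mathrm{der}}/Z(G_{\mathrm{der}})$, which holds because $Z(G_{\mathrm{der}})\subseteq Z(G)$ and $G=S\cdot G_{\mathrm{der}}$). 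So the hypothesis of strong $\Ad(G)$-flawedness enters the product step directly, and only one covering/lifting step via Proposition \ref{prop:lift} remains; the paper's auxiliary group $DG/F$ is bypassed entirely. A further merit of your write-up: you explicitly upgrade Proposition \ref{prop:T-flawed} to the \emph{strong} (equivariant-SDR) form via the radial scaling on $(\C^{*})^{k}$, noting that scaling moduli preserves the multiplicative relations coming from the abelianization; the paper's proof cites Proposition \ref{prop:T-flawed} for strong $(T/F)$-flawedness without comment, so your version fills in a point the paper glosses. Both arguments rest on the same two pillars (Lemma \ref{lem:product} and Proposition \ref{prop:lift}) and deliver the same conclusion; yours is shorter, while the paper's two-step factorization exhibits the isogeny \eqref{eq:isogeny} that it reuses later in the proof of Theorem \ref{simpletored}. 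The only items you flagged as remaining --- that $Z(G_{\mathrm{der}})$ lies in every maximal compact, and that maximal compacts track through the product isomorphism --- are routine (finite central subgroups lie in every maximal compact since maximal compacts are conjugate, and a maximal compact of a product is a product of maximal compacts), so no genuine gap remains.
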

\begin{proof}
Let $DG=[G,G]$ be the derived subgroup of $G$. Then, $DG$ is semisimple
and there is a central algebraic torus $T$ such that the multiplication
map $T\times DG\to G$ is surjective and has a finite central kernel
$F=T\cap DG$ 
\[
F\to T\times DG\to G\cong T\times_{F}DG
\]
where $F$ acts by identifying $(t,g)\in T\times DG$ with $(tf^{-1},fg)$.
This provides another exact sequence of groups: 
\begin{equation}
F\to G\stackrel{\varphi}{\to}T/F\times DG/F\label{eq:isogeny}
\end{equation}
where $\varphi$ is the homomorphism defined by $\varphi([t,g]):=([t],[g])$
(the notation $[\cdot]$ means the equivalence class under the respective
$F$ action), so that $G/F\cong T/F\times DG/F$.

Since $T/F$ is an abelian reductive group, it is again a torus, so
by Proposition \ref{prop:T-flawed},$\Gamma$ is strongly $(T/F)$-flawed.
Now, since $\Ad(G)$ and $DG$ have the same Lie algebra, we have
an isomorphism: 
\[
\Ad(G)\cong(DG/F)/F_{1}
\]
where $F_{1}$ is the finite center of $DG/F$. By hypothesis, $\Gamma$
is strongly $\Ad(G)$-flawed, and so, by Proposition \ref{prop:lift}
(applied to the $F_{1}$ quotient) it is strongly $(DG/F)$-flawed.
Now, by Lemma \ref{lem:product} $\Gamma$ is $(T/F\times DG/F)$-flawed.
Finally, again by Proposition \ref{prop:lift}, applied to the isogeny
\eqref{eq:isogeny}, $\Gamma$ is $G$-flawed. 
\end{proof}
A reductive $\mathbb{C}$-group is said to be of adjoint type if its
center is trivial. Hence, the above theorem allows us to ``bootstrap''
the property of being flawed from that of being $G$-flawed for all
simple $G$ of adjoint type. 
\begin{cor}
A finitely presented group $\Gamma$ is strongly flawed if and only
if it is strongly $H$-flawed for every simple $\mathbb{C}$-group
$H$ of adjoint type.\end{cor}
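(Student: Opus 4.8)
The plan is to separate the two implications, with the forward direction immediate and the reverse direction built from the structure theory of reductive groups together with the bootstrapping tools already in hand.

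For the direction ($\Rightarrow$), I would simply unwind the definitions: if $\Gamma$ is strongly flawed then by definition there is a $K$-equivariant SDR from $\hom(\Gamma,G)$ onto $\hom(\Gamma,K)$ for \emph{every} reductive $\C$-group $G$, and specializing to $G=H$ a simple $\C$-group of adjoint type shows at once that $\Gamma$ is strongly $H$-flawed for all such $H$. No further work is needed here.

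For the direction ($\Leftarrow$), I would fix an arbitrary reductive $\C$-group $G$ and aim to produce the required $K$-equivariant SDR, since being strongly flawed is precisely being strongly $G$-flawed for all reductive $G$. The key structural input is that the adjoint group $\Ad(G)=G/Z(G)$ is semisimple of adjoint type, and that any such group decomposes as a direct product $\Ad(G)\cong H_{1}\times\cdots\times H_{k}$ of simple $\C$-groups of adjoint type (the factors $H_{i}$ being the adjoint forms attached to the simple ideals of $\Lie(G)$). By hypothesis $\Gamma$ is strongly $H_{i}$-flawed for each $i$, so a straightforward induction on $k$ using Lemma \ref{lem:product} shows that $\Gamma$ is strongly $\Ad(G)$-flawed. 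Theorem \ref{thm:Ad(G)} then yields immediately that $\Gamma$ is (strongly) $G$-flawed; as $G$ was arbitrary, $\Gamma$ is strongly flawed. The degenerate case in which $G$ is a torus, where $\Ad(G)$ is trivial and the product is empty, is absorbed harmlessly into Theorem \ref{thm:Ad(G)} (which itself handles the central torus via Proposition \ref{prop:T-flawed}).

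I expect the only step requiring genuine care to be the product decomposition of $\Ad(G)$ into simple adjoint factors: this is the single place where the adjoint-type hypothesis is used in an essential way, since it is precisely for adjoint semisimple groups that the splitting into a direct product of simple groups holds on the nose (for simply connected groups one would instead get an isogeny, which is why restricting to adjoint type streamlines the argument). Everything else is a purely formal chaining of Lemma \ref{lem:product} and Theorem \ref{thm:Ad(G)}.
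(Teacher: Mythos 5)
Your proof is correct and follows essentially the same route as the paper: the forward direction is immediate, and for the converse you decompose $\Ad(G)$ into simple factors (adjoint because the center of a product is the product of the centers), combine the factors, and then bootstrap to $G$ via Theorem \ref{thm:Ad(G)}. The only cosmetic difference is that you glue the simple factors with Lemma \ref{lem:product} while the paper cites Proposition \ref{prop:lift} alongside Theorem \ref{thm:Ad(G)}; your citation is, if anything, the more precise one for that step.
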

\begin{proof}
One direction is trivial. Let $G$ be any reductive $\mathbb{C}$-group.
Then $\Ad(G)$ is semisimple, and it is the direct product of its
simple factors: 
\[
\Ad(G)=G_{1}\times\cdots\times G_{m}.
\]
Since the center of a product is the product of the centers, every
$G_{j}$ is of adjoint type. So, the result follows from Theorem \ref{thm:Ad(G)}
and Proposition \ref{prop:lift}. 
\end{proof}

\subsection{Bootstrapping from the Simple and Simply Connected Case}

We remark that if we assume that $\Gamma$ is $G$-flawed for every
simple and simply connected $G$, this may not be enough to prove
that $\Gamma$ is flawed, since an SDR can be lifted to a covering
but not the other way. However, a weaker general result is still possible,
which motivates the following definition.

\begin{defn}
We say that $\Gamma$ is \emph{special flawed} if there exists an
irreducible component $\X_{\Gamma}^{*}(G)$ in $\X_{\Gamma}(G)$ that
SDR onto $\X_{\Gamma}^{*}(K):=\X_{\Gamma}^{*}(G)\cap\X_{\Gamma}(K)$. \end{defn}
\begin{example}
One trivial example of special flawedness is when a character variety
has an isolated point $[\rho]\in\X_{\Gamma}(G)$; in this case, we
call $\rho$ \emph{rigid}. This situation occurs, for example, when
$\Gamma$ is a Kazhdan group. It is known that the identity representation
of such $\Gamma$ is rigid, for all $G$ (see \cite[Proposition 1]{Rap-PropT}
and \cite[Theorem 3]{Rap}), so Kazhdan groups are special flawed. 
\end{example}
Since SDRs are continuous, we immediately have: 
\begin{prop}
If $\Gamma$ is flawed, it is special flawed. 
\end{prop}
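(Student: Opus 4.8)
The plan is to extract a single global strong deformation retraction from the hypothesis and then restrict it to a component. First I would unwind the definition: since $\Gamma$ is flawed, the inclusion $i_{G}\colon\X_{\Gamma}(K)\hookrightarrow\X_{\Gamma}(G)$ is an SDR for every reductive $\C$-group $G$, so there is a homotopy $H\colon I\times\X_{\Gamma}(G)\to\X_{\Gamma}(G)$ with $H_{0}=\mathrm{id}$, with $H_{1}=i_{G}\circ r$ for some retraction $r$ onto $\X_{\Gamma}(K)$, and with $H_{t}$ fixing $\X_{\Gamma}(K)$ pointwise for all $t$. Because $\X_{\Gamma}(K)$ is compact, $\X_{\Gamma}(G)$ has only finitely many connected components and every component meets $\X_{\Gamma}(K)$, so the targets appearing below are nonempty.

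Next I would use that $\X_{\Gamma}(G)$, in the analytic topology of a complex affine variety, is locally connected, whence its connected components are both open and closed. Fixing a connected component $C$, the set $I\times C$ is connected, so by continuity $H(I\times C)$ is a connected subset of $\X_{\Gamma}(G)$ containing $C=H_{0}(C)$; maximality of $C$ as a connected set forces $H(I\times C)\subseteq C$. Thus $H$ restricts to $I\times C\to C$, and this restriction is an SDR of $C$ onto $C\cap\X_{\Gamma}(K)$, the relative condition being inherited from $H$. This is exactly the content meant by ``immediate from continuity'': the global retraction automatically respects the clopen decomposition into connected components.

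Finally I would produce the irreducible component required by the definition of special flawed, and this is where the main obstacle lies. If $\X_{\Gamma}(G)$ is irreducible, the whole space is an irreducible component and flawedness gives the conclusion at once with $\X_{\Gamma}^{*}(G)=\X_{\Gamma}(G)$; more generally, whenever some connected component $C$ is itself irreducible, the clopen argument above applies verbatim and $\X_{\Gamma}^{*}(G):=C$ works. The genuine difficulty is that an arbitrary irreducible component need \emph{not} be open, so the connectedness argument does not confine the flow to it: in principle $H$ could route points through the intersection locus of two components and thereby leave a chosen irreducible piece. I would handle this by selecting $\X_{\Gamma}^{*}(G)$ to be a connected component that is irreducible -- for instance the component of the trivial representation $[\mathbf{1}]\in\X_{\Gamma}(K)$ when it is irreducible, or any isolated irreducible component -- so that the restricted homotopy retracts $\X_{\Gamma}^{*}(G)$ onto $\X_{\Gamma}^{*}(G)\cap\X_{\Gamma}(K)=\X_{\Gamma}^{*}(K)$, exhibiting $\Gamma$ as special flawed.
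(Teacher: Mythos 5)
Your clopen argument is correct, and it is precisely the rigorous content of the paper's own one-line proof (the paper literally says ``Since SDRs are continuous, we immediately have\dots''): the track $t\mapsto H(t,x)$ is a path starting at $x$, so it stays in the connected component of $x$; hence every connected component $C$ of $\X_{\Gamma}(G)$ satisfies $H(I\times C)\subseteq C$, and $H$ restricts to an SDR of $C$ onto $C\cap\X_{\Gamma}(K)=H_{1}(C)$, which is nonempty because $H_{1}$ maps $C$ into $\X_{\Gamma}(K)$ while fixing $\X_{\Gamma}(K)$ pointwise. Up to that point you and the paper are doing the same thing.

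The gap is in your last step. Special flawedness requires an \emph{irreducible} component, and your resolution --- pick a connected component that happens to be irreducible --- is an additional hypothesis, not a consequence of flawedness. It can genuinely fail for flawed groups: for the star-shaped RAAG $\Gamma_{\angle}=\langle a,b,c\mid [a,b]=[b,c]=1\rangle$, which is flawed by Corollary \ref{cor:Star-RAAG-flawed}, the paper computes that $\X_{\Gamma_{\angle}}(\SL(2,\C))$ has exactly three irreducible components, two copies of $\X_{\mathsf{F}_{2}}(\SL(2,\C))\cong\C^{3}$ and the abelian component $T^{3}/W$, with the abelian component meeting each of the other two; so this character variety is connected but reducible, no connected component is irreducible, the component of the trivial representation is reducible, and your recipe produces no candidate for $\X_{\Gamma}^{*}(G)$. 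As written, your argument therefore proves the proposition only under an unstated extra assumption. To be fair, you have put your finger on exactly the point the paper glosses over: continuity confines the retraction to connected components, not to irreducible ones, and the paper supplies nothing to bridge that difference (indeed its later appeals to special flawedness, e.g.\ Theorem \ref{simpletored}, work with the connected component of the trivial representation without verifying irreducibility). But identifying the obstacle and then assuming it away does not close the gap; a complete proof needs either an argument producing an irreducible component that retracts onto its compact part, or a reformulation of the definition in terms of connected components.
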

In general, for a group $\Gamma$ and a Lie group $G$ with center
$Z$, $\hom(\Gamma,Z)$ is a group and acts on $\hom(\Gamma,G)$ by
$(\sigma\cdot\rho)(\gamma)=\sigma(\gamma)\rho(\gamma)$. When $\Gamma$
is finitely generated, $\hom(\Gamma,G)$ and $\hom(\Gamma,Z)$ are
naturally subsets of $G^{r}$ with respect to a set of $r$ generators
of $\Gamma$. The action of $\hom(\Gamma,Z)$ is then the restricted
action of $Z^{r}$ on the subspace $\hom(\Gamma,G)\subset G^{r}$
given by $(z_{1},...,z_{r})\cdot(g_{1},...,g_{r})=(z_{1}g_{1},...,z_{r}g_{r})$.
We can also consider the action of $G\times\hom(\Gamma,Z)$ on $\hom(\Gamma,G)$
by combining conjugation and the multiplication action.
\begin{lem}
\label{central-lem} Let $G$ be reductive $\C$-group with maximal
compact subgroup $K$. Let $F$ be a subgroup of $Z(K)$, the center
of $K$. Let $\mathcal{KN}_{1}$ and $\mathcal{KN}_{2}$, respectively,
be the Kempf-Ness sets of $\hom(\Gamma,G)$ with respect to the conjugation
action of $G$, and with respect to the action $G\times\hom(\Gamma,F)$.
Then $\mathcal{KN}_{1}=\mathcal{KN}_{2}$. \end{lem}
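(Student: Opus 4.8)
The plan is to compare the two Kempf--Ness sets through their common defining condition. Recall that for the action of a reductive group on $V\subset\mathbb{V}$ with a fixed norm invariant under a maximal compact, the Kempf--Ness set is $\{v\in V\mid (dp_{v})_{\id}=0\}$, where $p_{v}(g)=\norm g\cdot v\norm^{2}$. This is a first-order condition at the identity, so it depends only on the action of the identity component, equivalently on the Lie algebra of the acting group. Since $F\subset Z(K)$ is finite, $\hom(\Gamma,F)\subset F^{r}$ is a finite, hence discrete, group; therefore $\mathcal{G}:=G\times\hom(\Gamma,F)$ is a reductive $\C$-group with identity component $G$, and $\Lie(\mathcal{G})=\lieg$.

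I would first check that $\mathcal{KN}_{2}$ is a bona fide Kempf--Ness set for the \emph{same} norm defining $\mathcal{KN}_{1}$. The maximal compact of $\mathcal{G}$ is $K\times\hom(\Gamma,F)$, and the factor $\hom(\Gamma,F)$ acts by left multiplication through elements of $\SU(n)^{r}$. As the standard Hermitian norm on $\mathbb{V}\supseteq M_{n}(\C)^{r}$ underlying \eqref{eq:KN} is invariant under both conjugation and left multiplication by unitaries, it is $\bigl(K\times\hom(\Gamma,F)\bigr)$-invariant, so the Kempf--Ness formalism applies verbatim to the $\mathcal{G}$-action and $\mathcal{KN}_{2}$ is defined by the same form.

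The heart of the argument is the comparison of defining equations. For $(g,\sigma)\in\mathcal{G}$ write $p_{v}^{(2)}(g,\sigma)=\norm g\cdot(\sigma\cdot v)\norm^{2}$; since conjugation and the (central) multiplication commute, the differential of $p_{v}^{(2)}$ at $(\id,e)$ splits as the sum of the conjugation directional derivative along $\xi\in\lieg$ and the multiplication directional derivative along $\zeta\in\Lie(\hom(\Gamma,F))$. The conjugation part reproduces precisely the equation cutting out $\mathcal{KN}_{1}$, namely $\sum_{i=1}^{r}[g_{i}^{*},g_{i}]=0$ from \eqref{eq:KN}, while the multiplication part ranges over $\zeta\in\Lie(\hom(\Gamma,F))=0$ and hence imposes nothing. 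Thus $(dp_{v}^{(2)})_{(\id,e)}=0$ is equivalent to the vanishing of the conjugation differential, giving $\mathcal{KN}_{2}=\mathcal{KN}_{1}$.

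The one point demanding care --- and the step I expect to be the real obstacle --- is the verification that enlarging the group by $\hom(\Gamma,F)$ contributes nothing infinitesimally; this is exactly where finiteness of $F$ is used. Were $F$ positive dimensional (say a subtorus of $Z(K)$), the factor $\hom(\Gamma,F)$ would contribute nonzero directions $i\,\Lie(\hom(\Gamma,F))$, and the resulting equations $\sum_{i}\tr(\zeta_{i}g_{i}g_{i}^{*})=0$ would in general cut $\mathcal{KN}_{1}$ down strictly --- already for $\Gamma=\Z$ with $G$ a maximal torus of $\SL(n,\C)$, where the conjugation action is trivial, so $\mathcal{KN}_{1}$ is the whole torus while these extra equations force the representation into the maximal compact. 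I would therefore make the discreteness of $\hom(\Gamma,F)$ explicit at the outset and let the splitting of the differential deliver the equality.
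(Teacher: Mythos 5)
Your core mechanism (for finite $F$) is fine, but your proof establishes less than the lemma claims, and the justification you give for the restriction is itself incorrect. The lemma places no finiteness hypothesis on $F$: it allows any subgroup of $Z(K)$, and $Z(K)$ can be positive dimensional (e.g.\ $G$ a maximal torus of $\SL(n,\C)$, where $Z(K)=K$ is a compact torus). Your argument starts by assuming $F$ finite, so that $\Lie(G\times\hom(\Gamma,F))=\lieg$, and your last paragraph asserts this is essential, offering a subtorus $F\subset Z(K)$ as a would-be counterexample. That counterexample is wrong, and the error is conceptual: the directions $i\,\Lie(\hom(\Gamma,F))$ along which you differentiate do not lie in the group $G\times\hom(\Gamma,F)$ named in the lemma; they lie in a complexification of the translation factor, which is a different, larger group. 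The translation factor $\hom(\Gamma,F)$ itself consists of tuples of \emph{unitary} central elements, and left multiplication by unitaries preserves the Hermitian form --- a fact you verify yourself when checking invariance under the maximal compact $K\times\hom(\Gamma,F)$. Hence the norm function is constant along the entire translation factor, which therefore contributes nothing to the critical-point condition (nor to minimality), whatever $\dim F$ is. In your test case ($\Gamma=\Z$, $G$ a maximal torus, $F=Z(K)$), $\hom(\Z,F)=F$ acts by norm-preserving translations, so $\mathcal{KN}_2$ is still the whole torus and equals $\mathcal{KN}_1$: the lemma holds. The equations $\sum_i\tr(\zeta_i g_i g_i^{*})=0$ you write down are the first-order conditions for complexified translations, an action the lemma never invokes.

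The gap closes --- uniformly in $F$ --- by the route the paper takes, for which you already have every ingredient. Use the minimal-vector characterization of the Kempf--Ness set: $\rho$ lies in the Kempf--Ness set of an $H$-action if and only if $\|\rho\|\le\|h\cdot\rho\|$ for all $h\in H$. Since the form is invariant under conjugation by $K$ and under left multiplication by unitary central elements (this is exactly \eqref{eq:invariance-under-F}), one has $\|(g,z_1,\ldots,z_r)\cdot\rho\|=\|g\cdot\rho\|$ for every $(g,z_1,\ldots,z_r)\in G\times\hom(\Gamma,F)$, so minimality over $G\times\hom(\Gamma,F)$ is literally the same condition as minimality over $G$, giving $\mathcal{KN}_1=\mathcal{KN}_2$ with no assumption on $F$ beyond $F\subset Z(K)$. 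Your infinitesimal Lie-algebra argument is a correct proof of the finite case --- which is all that the application in Theorem \ref{simpletored} requires --- but as written it under-proves the stated lemma and mislocates where the hypothesis does its work: what matters is unitarity and centrality of $F$ (norm invariance), not discreteness of $\hom(\Gamma,F)$.
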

\begin{proof}
Without loss of generality, we can assume that $K$ is a subgroup
of $\mathrm{SU}(n)$ and that $G$ is a subgroup of $\mathrm{SL}(n,\C)\subset\mathfrak{gl}(n,\mathbb{C})\cong\C^{n^{2}}$,
and endow $\mathfrak{gl}(n,\mathbb{C})$ with the bilinear form $\langle X,Y\rangle=\tr(X^{*}Y)$.
This form is $K\times Z(K)$-invariant since 
\begin{equation}
\tr((zkXk^{-1})^{*}(zkYk^{-1}))=\tr(kX^{*}k^{-1}z^{-1}zkYk^{-1})=\tr(X^{*}Y),\label{eq:invariance-under-F}
\end{equation}
and the complex conjugate transpose $g^{*}$ of $g\in G$ agrees with
the same operation on the vector space $\mathfrak{gl}(n,\mathbb{C})$.
If $\Gamma$ is generated by $r$ elements, we can consider $\hom(\Gamma,G)\subset G^{r}\subset\C^{rn^{2}}$.
This allows us to define a $K\times\hom(\Gamma,Z(K))$-invariant bilinear
form on $\hom(\Gamma,G)$ by 
\[
\langle(X_{1},...,X_{r}),(Y_{1},...,Y_{r})\rangle=\sum_{j=1}^{r}\langle X_{j},Y_{j}\rangle.
\]
With respect to this form, a representation $\rho\in\hom(\Gamma,G)$
is \textit{\emph{minimal with respect to an action of a group $H$}}
if $\|\rho\|\leq\|h\cdot\rho\|$ for all $h\in H$, and the Kempf-Ness
set with respect to the $H$-action is the collection of minimal vectors. 

Clearly the multiplicative action of $F^{r}$ on $G^{r}$ commutes
with conjugation since $F\subset Z(K)$. Moreover, by \eqref{eq:invariance-under-F}:
\[
\|(g,z_{1},\cdots,z_{r})\cdot\rho\|=\|g\cdot\rho\|,
\]
for all $(g,z_{1},\cdots,z_{r})\in G\times\hom(\Gamma,F)$, using
the conjugation action of $G$ on the right and the conjugation-translation
action of $G\times\hom(\Gamma,F)$ on the left. Thus, the Kempf-Ness
set for both actions are identical.
\end{proof}
Let us denote by $\X_{\Gamma}^{0}(G)$ the path component of $\X_{\Gamma}(G)$
containing the trivial representation $\gamma\mapsto1\in G$, for
all $\gamma\in\Gamma$. Likewise, denote by $\X_{\Gamma}^{0}(K)$
the path component of $\X_{\Gamma}(K)$ containing the trivial representation.

For connected character varieties, we can also bootstrap the property
of being flawed from that of $G$-flawed for all $G$ simple and simply
connected. 
\begin{thm}
\label{simpletored} If $\Gamma$ is strongly $G$-flawed for all
simple simply connected $\C$-groups $G$, then $\Gamma$ is $($strongly$)$
special flawed.\end{thm}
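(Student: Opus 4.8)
The plan is to transport flawedness \emph{downward} along the universal covering $\pi\colon\widetilde G\to G$ --- the one direction that Proposition~\ref{prop:lift} does not supply. First I would reduce to the essential case. Splitting off the central torus exactly as in the proof of Theorem~\ref{thm:Ad(G)} (using Proposition~\ref{prop:T-flawed} for the torus factor and Lemma~\ref{lem:product} to recombine), it suffices to treat $G$ semisimple, where the new content lies; writing the simply connected cover as $\widetilde G=\widetilde G_{1}\times\cdots\times\widetilde G_{m}$ with each $\widetilde G_{i}$ simple and simply connected, Lemma~\ref{lem:product} shows $\Gamma$ is strongly $\widetilde G$-flawed. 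Set $Z=\ker\pi\subset Z(\widetilde G)\subset\widetilde K$, a finite central subgroup, so $K=\widetilde K/Z$, and let $A:=\hom(\Gamma,Z)$, a finite abelian group. The distinguished component $\X_\Gamma^{*}(G)$ will be the image of $\X_\Gamma(\widetilde G)$ in $\X_\Gamma(G)$, equivalently the component $\X_\Gamma^{0}(G)$ of the trivial representation.

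The central tool is the covering picture at the level of representation varieties. As in Proposition~\ref{prop:lift}, the induced map $\hat\pi\colon\hom(\Gamma,\widetilde G)\to\hom(\Gamma,G)$ surjects onto a union of path components $\hom^{*}(\Gamma,G)$, and by \cite[Lemma 2.2]{G3} together with the local path-connectedness from \cite[Lemma 3.5]{Lawton-Ramras} it is a covering onto $\hom^{*}(\Gamma,G)$. Because $Z$ is central, two lifts of a representation differ by multiplication by an element of $A$, and this multiplication action of $A$ is \emph{free} and realizes $\hat\pi$ as the quotient $\hom(\Gamma,\widetilde G)\to\hom(\Gamma,\widetilde G)/A=\hom^{*}(\Gamma,G)$, restricting to $\hom(\Gamma,\widetilde K)\to\hom^{*}(\Gamma,K):=\hom(\Gamma,\widetilde K)/A$. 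Crucially, although $A$ acts freely here, its descended action on $\X_\Gamma(\widetilde G)$ need not be free, which is exactly why the entire argument must be carried out upstairs at the level of $\hom$ rather than on character varieties.

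Next I would produce a homotopy that descends along $\hat\pi$. By strong $\widetilde G$-flawedness there is a $\widetilde K$-equivariant SDR of $\hom(\Gamma,\widetilde G)$ onto $\hom(\Gamma,\widetilde K)$, and the key point is to arrange it to be $A$-equivariant for the central multiplication action. Here Lemma~\ref{central-lem} is decisive: taking $F=Z$, the $K$-invariant norm on $\hom(\Gamma,\widetilde G)$ is also invariant under $A$, so the Schwarz--Neeman retraction onto $\KN_\Gamma$ (Theorem~\ref{schwarzneeeman}) is simultaneously $\widetilde K$- and $A$-equivariant. Since $\hom(\Gamma,\widetilde K)\hookrightarrow\KN_\Gamma\hookrightarrow\hom(\Gamma,\widetilde G)$ are all homotopy equivalences (strong flawedness and Theorem~\ref{schwarzneeeman}, by two-out-of-three), the inclusion $\hom(\Gamma,\widetilde K)\hookrightarrow\hom(\Gamma,\widetilde G)$ is an $A$-equivariant map of free $A$-CW complexes that is an ordinary homotopy equivalence; combining it with the central-equivariant Kempf--Ness flow I would upgrade it to an SDR that is at once $A$- and $\widetilde K$-equivariant. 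As $\hat\pi$ is the quotient by the free $A$-action and has discrete fibers, this SDR descends --- exactly as in the lifting argument of Proposition~\ref{prop:lift} --- to a $K$-equivariant SDR of $\hom^{*}(\Gamma,G)$ onto $\hom^{*}(\Gamma,K)$.

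Finally I would pass to character varieties. Quotienting the descended $K$-equivariant SDR by conjugation gives an SDR of $\Y_\Gamma^{*}(G):=\hom^{*}(\Gamma,G)/K$ onto $\hom^{*}(\Gamma,K)/K$; restricting the Schwarz--Neeman SDR of Theorem~\ref{thm:SN} to the distinguished components gives an SDR of $\X_\Gamma^{*}(G)$ into $\Y_\Gamma^{*}(G)$; and comparing the two (as in the proof of Theorem~\ref{thm:numeric}, via the cellular Whitehead theorem \cite[p.~346]{Hatcher}) yields that $\X_\Gamma^{*}(K)=\X_\Gamma^{*}(G)\cap\X_\Gamma(K)\hookrightarrow\X_\Gamma^{*}(G)$ is an SDR, i.e. $\Gamma$ is special flawed, indeed strongly so. The main obstacle is the equivariance reconciliation of the third paragraph: the strong-flawedness retraction is only guaranteed to respect \emph{conjugation}, whereas the descent along the non-surjective covering $\hat\pi$ requires it to respect the \emph{central translation} action of $A$, and these are genuinely different actions on $\hom(\Gamma,\widetilde G)$. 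Forcing a single SDR equivariant for both is the technical heart, and it is precisely where Lemma~\ref{central-lem} (central invariance of the norm, hence of the Kempf--Ness flow) must be married to free-action equivariant homotopy theory.
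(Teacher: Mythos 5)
Your proposal follows the same skeleton as the paper's own proof: pass to the central isogeny from $T\times\prod_{i}G_{i}$ (a torus times simple simply connected factors) onto $G$, form the product of the hypothesis SDRs upstairs, invoke Lemma~\ref{central-lem} to deal with the central translation action, and take the distinguished component $\X_{\Gamma}^{*}(G)$ to be the one containing the trivial representation. The paper packages the descent via \cite[Proposition 5]{Sikora-SO}, identifying $\hom^{0}(\Gamma,G)$ with the quotient of $\hom^{0}(\Gamma,T\times\prod_{i}G_{i})$ by the finite translation group $\hom'(\Gamma,F)$, and works with Kempf--Ness sets throughout; you package the same descent as covering-space theory for the free action of $A=\hom(\Gamma,Z)$. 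Up to this repackaging the architecture is identical.

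The problem is at what you yourself call the technical heart, and the tools you cite do not close it. To descend along $\hat\pi$ and then pass to character varieties you need a \emph{single} SDR of $\hom(\Gamma,\widetilde{G})$ (or of its Kempf--Ness set) onto $\hom(\Gamma,\widetilde{K})$ that is equivariant for $\widetilde{K}\times A$ simultaneously: the $A$-equivariance is what allows descent, the $\widetilde{K}$-equivariance is what survives as $K$-equivariance downstairs. Free-action equivariant Whitehead theory does produce an $A$-equivariant SDR, since $A$ acts freely; but it cannot produce the joint equivariance, because the $\widetilde{K}\times A$-action is nowhere near free (the trivial representation is fixed by all of $\widetilde{K}$), and an equivariant Whitehead theorem for this group would require the inclusion $\hom(\Gamma,\widetilde{K})\hookrightarrow\KN_{\Gamma}$ to be a homotopy equivalence on $H$-fixed sets for \emph{every} closed subgroup $H\leq\widetilde{K}\times A$; for $H\subset\widetilde{K}$ those fixed sets are spaces of representations into centralizers, about which the hypotheses say nothing. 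Nor does ``combining with the Kempf--Ness flow'' help: composing the $(\widetilde{K}\times A)$-equivariant Schwarz--Neeman retraction of Theorem~\ref{schwarzneeeman} with the merely $\widetilde{K}$-equivariant hypothesis SDR yields a homotopy that is again only $\widetilde{K}$-equivariant, so you are back where you started. This is precisely why the paper never argues through equivariant homotopy theory: it composes, factor by factor, each hypothesis SDR with the Schwarz--Neeman retraction to obtain a $(T_{\R}\times\prod_{i}K_{i})$-equivariant SDR of the Kempf--Ness set onto the compact representations, and then uses Lemma~\ref{central-lem} (equality of the Kempf--Ness sets for the conjugation action and for the mixed $G\times\hom'(\Gamma,F)$ action) to identify $\X_{\Gamma}^{0}(G)$ with the quotient of that Kempf--Ness set by the combined group, so the translation action is absorbed into the group one quotients by rather than into a homotopy one must construct. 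A secondary, fixable, issue: your opening reduction ``to $G$ semisimple'' via Lemma~\ref{lem:product} is not available as stated, since $G$ is a quotient of $T\times DG$ by a finite central subgroup, not a direct product; recombining would require Proposition~\ref{prop:lift}, whose hypothesis concerns the full representation space, while your semisimple case only delivers a component-wise SDR. The paper sidesteps this by treating the torus and the simple factors inside a single isogeny.
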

\begin{proof}
Let $G$ be a connected reductive $\C$-group. As in the proof of
Theorem \ref{thm:Ad(G)}, the central isogeny theorem states that
$G\cong T\times_{F_{1}}DG$, with $DG=[G,G]$ semisimple, $T$ a central
algebraic torus, and $F_{1}:=T\cap DG$ a finite central subgroup.

Since $DG$ is semisimple, there exists a collection of simple simply
connected $\C$-groups $G_{1},...,G_{n}$ such that $DG\cong(\prod_{i=1}^{n}G_{i})/F_{2}$,
where $F_{2}$ is finite central subgroup of $\prod_{i=1}^{n}G_{i}$.
Putting this together, there is a finite central group $F:=F_{1}\times F_{2}\subset T\times\prod_{i=1}^{n}G_{i}$
such that $G\cong(T\times\prod_{i=1}^{n}G_{i})/F$.

Let $\hom^{0}(\Gamma,G)$ be the component of $\hom(\Gamma,G)$ that
contains the trivial representation and let $\hom'(\Gamma,F)$ be
the subgroup of $\hom(\Gamma,F)$ mapping $\hom^{0}(\Gamma,T\times\prod_{i=1}^{n}G_{i})$
to itself. Then, by \cite[Proposition 5]{Sikora-SO}, we have: 
\begin{eqnarray*}
\hom^{0}(\Gamma,G) & \cong & \hom^{0}(\Gamma,T\times\prod_{i=1}^{n}G_{i})/\hom'(\Gamma,F)\\
 & \cong & \left(\hom^{0}(\Gamma,T)\times\prod_{i=1}^{n}\hom^{0}(\Gamma,G_{i})\right)/\hom'(\Gamma,F).
\end{eqnarray*}

By Lemma \ref{central-lem}, the Kempf-Ness sets for the conjugation
action of $G$ and that of the mixed action $G\times\hom'(\Gamma,F)$
are the same.

Let $K_{i}$ be maximal compact subgroups of $G_{i}$, and $T_{\R}$
a maximal compact in $T$. By assumption, for each index $i$, there
is a $K_{i}$-equivariant SDR the Kempf-Ness set of $\hom^{0}(\Gamma,G_{i})$
onto $\hom^{0}(\Gamma,K_{i})$. Thus, since products of strong deformation
retracts are strong deformation retracts, and using Proposition \ref{prop:T-flawed}
for the torus case, there is a $(T_{\R}\times\prod_{i=1}^{n}K_{i})$-equivariant
SDR from the Kempf-Ness set of 
\[
\hom^{0}(\Gamma,T)\times\prod_{i=1}^{n}\hom^{0}(\Gamma,G_{i})
\]
to $\hom^{0}(\Gamma,T_{\R})\times\prod_{i=1}^{n}\hom^{0}(\Gamma,K_{i})$.

Note that the conjugation action of $(T_{\R}\times\prod_{i=1}^{n}K_{i})$
and that of $K$ are the same since the conjugation action of $F$
is trivial (it is central). Therefore, by Lemma \ref{central-lem},
there is a $K$-equivariant SDR from the Kempf-Ness set of $\hom^{0}(\Gamma,G)$
onto $\hom(\Gamma,K)$. \end{proof}
\begin{rem}\label{rem-bootsc}
By \cite[Proposition 4.2]{Lawton-Ramras} the above theorem can be
improved in some cases. In particular, when $\Gamma$ is ``exponent-canceling''
(e.g. free groups, free abelian groups, surface groups, RAAGs) the
covering $T\times\prod_{i=1}^{n}G_{i}\to G$ with deck group $F$
induces a surjective map $\X_{\Gamma}(T\times\prod_{i=1}^{n}G_{i})\to\X_{\Gamma}(G)$
whose quotient by $\hom(\Gamma,F)$ induces an isomorphism. In these
cases, the conclusion of Theorem \ref{simpletored} can be improved
from special flawed to flawed. 
\end{rem}

\subsection{Extending Flawedness to the Real Case \label{sub:real-case}}

We now extend the theory to allow for real groups $G$ and real character
varieties. We obtain, in particular, a very general criterion for
when a flawed group $\Gamma$ will be also flawed in the more general
real context.

We call a Lie group $G$ \emph{real $K$-reductive} if the following
conditions hold: 
\begin{enumerate}
\item $K$ is a maximal compact subgroup of $G$; 
\item $G$ is a real algebraic subgroup of $\mathbf{G}(\R)$; the $\R$-points
of a reductive $\C$-group $\mathbf{G}$; 
\item $G$ is Zariski dense in $\mathbf{G}$. 
\end{enumerate}
Any real $K$-reductive Lie group has a faithful representation since
every reductive $\C$-group does. Therefore, we may consider any such
group as a subgroup of $\SL(n,\C)$ for appropriate $n$.

Given a self-map $\alpha$ of a set $X$, we will use the notation
$\Fix_{\alpha}(X):=\{x\in X\ |\ \alpha(x)=x\}$ to denote the fix-point
set.

Let $\lieg$ denote the Lie algebra of $G$, and $\liegc$ the Lie
algebra of $\mathbf{G}$. We will fix a Cartan involution $\theta:\liegc\to\liegc$
which restricts to a Cartan involution 
\begin{equation}
\theta:\lieg\to\lieg.\label{eq:Cartan inv}
\end{equation}

The map $\theta$ is defined as $\theta:=\sigma\tau$, where $\sigma,\tau$
are commuting involutions of $\liegc$, such that $\lieg=\Fix_{\sigma}(\liegc)$
and $\liek':=\Fix_{\tau}(\liegc)$ is the compact real form of $\liegc$.
Thus, $\liek'$ is the Lie algebra of a maximal compact subgroup of
$\mathbf{G}$. We call $\sigma$ a \textit{real structure}.

Using $\theta$ we obtain a Cartan decomposition of $\lieg$: 
\[
\lieg=\liek\oplus\liep
\]
where 
\[
\liek=\lieg\cap\liek',\hspace{0.5cm}\liep=\lieg\cap i\liek'
\]
and $\theta|_{\liek}=1$ and $\theta|_{\liep}=-1$. Furthermore, $\liek$
is the Lie algebra of a maximal compact subgroup $K$ of $G$. Then
$K=K'\cap G$, where $K'$ is a maximal compact subgroup of $\mathbf{G}$,
with Lie algebra $\liek'=\liek\oplus i\liep$. Also, $\liek$ and
$\liep$ satisfy $[\liek,\liep]\subset\liep$ and $[\liep,\liep]\subset\liek$.
The Cartan decomposition of $\liegc$ is: 
\[
\liegc=\liekc\oplus\liepc
\]
with $\theta|_{\liekc}=1$ and $\theta|_{\liepc}=-1$.

The Cartan involution \eqref{eq:Cartan inv} lifts to a Lie group
involution $\Theta:G\to G$ whose differential is $\theta$ and such
that $K=\Fix_{\Theta}(G).$ $\Theta$ is also the composition of two
commuting involutions $T$ and $S$, where $T$ corresponds to $\tau$,
and $S$ corresponds to $\sigma$.\footnote{For an appropriate linear representation of $\mathbf{G}$, we can
arrange for $S$ to be complex conjugation and for $T$ to be complex
conjugation composed with inverse-transpose.}

For a finitely generated group $\Gamma$, there is an inclusion $\hom(\Gamma,K)\hookrightarrow\hom(\Gamma,G)$,
and so there is a natural map $i_{G}:\X_{\Gamma}(K)\to\X_{\Gamma}(G)$.
The map $i_{G}$ is injective by observing that \cite[Remark 4.7]{FlLa3}
applies to this setting by \cite[Section 3.2]{CFLO1}.\footnote{In further generality, a theorem of Cartan says that any connected
real Lie group $G$ is diffeomorphic to $K\times\R^{n}$ where $K$
is a maximal compact subgroup of $G$. The argument in \cite[Remark 4.7]{FlLa3}
can be adapted to this setting too.} 
\begin{defn}
We will say that $\Gamma$ is \textit{$G$-flawed} if $\X_{\Gamma}(G)$
strong deformation retracts onto $i_{G}(\X_{\Gamma}(K))$ for a fixed
$G$ and any maximal compact $K\subset G$. 
\end{defn}
In every known case of a flawed group, it was first showed that $\Gamma$
is $G$-flawed for all reductive $\C$-groups and then that $\Gamma$
was $G$-flawed for all \textit{real} reductive $G$. 
\begin{defn}
If $\Gamma$ is $G$-flawed for all \textit{real} reductive $G$,
we will say that $\Gamma$ is \textit{real flawed}. 
\end{defn}
We conjecture that this is a general phenomenon. In particular, we
conjecture that if $\Gamma$ is flawed, then it is real flawed.

The following theorem, giving evidence to the aforementioned conjecture,
\textit{does} account for all known cases where flawed groups turn
out to be real flawed. 
\begin{thm}
\label{thm:real} If $\Gamma$ is real flawed, it is flawed. Conversely,
if $\Gamma$ is strongly flawed and the SDR commutes with a real structure
on $G$, then $\Gamma$ is real flawed. \end{thm}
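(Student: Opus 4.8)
The plan is to prove the two implications separately, since the forward direction (real flawed $\Rightarrow$ flawed) is essentially a specialization, while the converse requires a genuine construction adapting a complex SDR to the real setting.

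\textbf{Forward direction.} First I would observe that a reductive $\C$-group $\mathbf{G}$ is itself an instance of a real $K'$-reductive Lie group: taking $G = \mathbf{G}(\R)$ in the general setup is not quite what we want, but the key point is that any reductive $\C$-group, viewed as a real Lie group, satisfies conditions (1)--(3) defining real $K$-reductivity (it is the $\R$-points of a reductive $\C$-group, namely itself after complexifying, and is Zariski dense). Thus if $\Gamma$ is $G$-flawed for \emph{all} real reductive $G$, then in particular it is $G$-flawed for every reductive $\C$-group, which is exactly the definition of flawed. The only care needed is to check that the notion of $G$-flawed for real groups (SDR of $\X_\Gamma(G)$ onto $i_G(\X_\Gamma(K))$) restricts to the complex notion under this identification; this follows because the map $i_G$ in \eqref{eq:iG} and its real analogue are defined compatibly, as the footnote referencing \cite[Remark 4.7]{FlLa3} and \cite[Section 3.2]{CFLO1} makes precise.

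\textbf{Converse direction.} This is the substantive half. Suppose $\Gamma$ is strongly flawed, so for every reductive $\C$-group $\mathbf{G}$ with maximal compact $K'$ there is a $K'$-equivariant SDR $H$ from $\hom(\Gamma,\mathbf{G})$ onto $\hom(\Gamma,K')$, and suppose further that $H$ commutes with a real structure $S$ on $\mathbf{G}$ (equivalently with the involution on $\hom(\Gamma,\mathbf{G})$ induced by $S$ acting entrywise). The plan is to restrict $H$ to the fixed-point set of $S$. Using the Cartan data from the excerpt, the real form $G$ arises as $\Fix_S$-type data inside $\mathbf{G}$, and $\hom(\Gamma,G)$ should appear as the fixed-point set $\Fix_S(\hom(\Gamma,\mathbf{G}))$ (applying $S$ to each coordinate of a representation). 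Because $H$ commutes with $S$, the homotopy $H$ carries the fixed-point set to itself: for $\rho\in\Fix_S$, we have $S(H(t,\rho)) = H(t,S\rho) = H(t,\rho)$, so $H(t,\rho)\in\Fix_S$ for all $t$. Hence $H$ restricts to a deformation retraction $H|: I\times\hom(\Gamma,G)\to\hom(\Gamma,G)$.

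\textbf{Identifying the target and equivariance.} It then remains to check two things. First, that the retract lands in $\hom(\Gamma,K)$: since $H$ retracts onto $\hom(\Gamma,K')$ and $K = K'\cap G = \Fix_\Theta(G)$ with $\Theta = ST$ (equivalently $K' \cap \Fix_S = K$ by the Cartan decomposition $\liek' = \liek\oplus i\liep$ and $\liek = \liek'\cap\lieg$ recorded above), the restricted retract maps $\hom(\Gamma,G)$ into $\hom(\Gamma,K')\cap\hom(\Gamma,G) = \hom(\Gamma,K)$. Second, that the restriction is $K$-equivariant: the original $H$ is $K'$-equivariant for the conjugation action, and $K\subset K'$, so restricting the group to $K$ and the space to the $S$-fixed locus preserves equivariance since conjugation by $K$ commutes with $S$. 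Passing to the quotient by $K$, this $K$-equivariant SDR descends to an SDR of $\X_\Gamma(G)$ onto $i_G(\X_\Gamma(K))$, i.e. $\Gamma$ is $G$-flawed; as $G$ was an arbitrary real reductive group, $\Gamma$ is real flawed. The main obstacle I expect is verifying cleanly that $\Fix_S(\hom(\Gamma,\mathbf{G})) = \hom(\Gamma,G)$ and that $\Fix_S(\hom(\Gamma,K')) = \hom(\Gamma,K)$ at the level of the whole representation space rather than just the group $\mathbf{G}$ itself — this requires that taking $S$-fixed points commutes with forming $\hom(\Gamma,-)$, which holds because a tuple $(g_1,\dots,g_r)$ satisfies the relations of $\Gamma$ if and only if its $S$-image does (as $S$ is a group involution), so the hypothesis that the SDR commutes with the real structure is exactly what bridges the complex and real pictures.
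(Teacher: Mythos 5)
Your proof is correct and follows essentially the same route as the paper: the forward direction by observing that every reductive $\C$-group is itself a real reductive $K$-group, and the converse by restricting the $S$-equivariant SDR to the $S$-fixed locus, identifying $\Fix_{S}(\hom(\Gamma,\mathbf{G}))=\hom(\Gamma,G)$ and $\Fix_{S}(K')=K$, and checking $K$-equivariance of the restriction. The one caveat (shared with, and glossed equally in, the paper's own proof) is that the final passage from the $K$-equivariant SDR of $\hom(\Gamma,G)$ onto $\hom(\Gamma,K)$ to an SDR of $\X_{\Gamma}(G)$ onto $i_{G}(\X_{\Gamma}(K))$ is not literally ``quotient by $K$,'' since $\X_{\Gamma}(G)$ is the polystable quotient by $G$ rather than $\hom(\Gamma,G)/K$; it requires the real Kempf--Ness machinery of \cite{RS,CFLO1}, i.e.\ the real analogue of Theorem \ref{thm:strongimplies}.
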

\begin{proof}
Since every reductive $\C$-group is a real reductive $K$-group,
$\Gamma$ is flawed if it is real flawed by definition.

Now suppose that $\Gamma$ is strongly flawed. Let $G$ be a real
reductive $K$-group that is a subgroup of the real points of $\mathbf{G}$.
Let $\Theta=ST$ be the Cartan involution on $\mathbf{G}$ such that
$G=\Fix_{S}(\mathbf{G})$, $K=\Fix_{\Theta}(\mathbf{G})$, and $K'=\Fix_{T}(\mathbf{G})$.
Note that $K'$ is a maximal compact subgroup of $\mathbf{G}$ such
that $K=K'\cap G$.

The action of $S$, $T$, and $\Theta$ on $\mathbf{G}$ extends to
an action on $\hom(\Gamma,\mathbf{G})$ by post-composition of homomorphisms.

Since $\Gamma$ is strongly flawed, there exists a $K'$-equivariant
SDR, 
\[
\Phi:\hom(\Gamma,\mathbf{G})\times[0,1]\to\hom(\Gamma,\mathbf{G}),
\]
from $\hom(\Gamma,\mathbf{G})$ onto $\hom(\Gamma,K')$. By assumption,
$\Phi$ is $S$-equivariant. In other words, $S(\Phi(\rho,t))=\Phi(S(\rho),t)$.
Therefore, $\Phi$ restricts to the fix-point set of $S$. Since $\Phi$
is $K'$-equivariant and $K\subset K'$, we conclude that $\Phi$
is $K$-equivariant too. Therefore, since $\Fix_{S}(\hom(\Gamma,\mathbf{G})=\hom(\Gamma,G)$
and $\Fix_{S}(K')=K$, $\Phi$ restricts to a $K$-equivariant SDR
from $\hom(\Gamma,G)$ onto $\hom(\Gamma,K)$. Therefore, $\Gamma$
is (strongly) real flawed. 
\end{proof}

\section{Free Products of Nilpotent Groups are Flawed}

In this section, we prove our first main theorem, as discussed in
the introduction (Theorem A). We start with a generalization of Theorem
\ref{thm:strongimplies} to free products, that may have independent
interest. 
\begin{thm}
\label{thm:freeproduct} A free product of strongly flawed groups
is strongly flawed $($hence flawed$)$. More concretely, let $\Gamma_{1},...,\Gamma_{m}$
be finitely generated groups and $\Gamma_{1}*\cdots*\Gamma_{m}$ be
their free product. If there is a $K$-equivariant SDR from $\hom(\Gamma_{i},G)$
to $\hom(\Gamma_{i},K)$ for all $1\leq i\leq m$, then $\Gamma_{1}*\cdots*\Gamma_{m}$
is strongly flawed. \end{thm}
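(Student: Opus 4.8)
The plan is to exploit the universal property of free products, which turns the representation variety of $\Gamma := \Gamma_1 * \cdots * \Gamma_m$ into a product and lets me assemble the given retractions factor by factor. First I would fix an arbitrary reductive $\C$-group $G$ with maximal compact $K$; since both the hypothesis and the construction below are uniform in $G$, producing a $K$-equivariant SDR for this fixed $G$ suffices to conclude strong flawedness. Fixing $r_i$ generators for each $\Gamma_i$ presents $\Gamma$ on the disjoint union of these generators, and a homomorphism out of a free product is exactly an unconstrained tuple of homomorphisms out of the factors. Hence evaluation gives an isomorphism of affine varieties
\[
\hom(\Gamma, G) \;\cong\; \hom(\Gamma_1, G) \times \cdots \times \hom(\Gamma_m, G),
\]
restricting to the analogous identification of $\hom(\Gamma, K)$ with $\prod_{i} \hom(\Gamma_i, K)$. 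The point to emphasize is that, because $(g\cdot\rho)(\gamma) = g\rho(\gamma)g^{-1}$ for \emph{every} $\gamma$, restricting to each factor conjugates $\rho|_{\Gamma_i}$ by the \emph{same} element $g$; thus, under the identification above, the conjugation action of $G$ corresponds to the \emph{diagonal} conjugation action on the product.

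Next I would take the product of the given homotopies. By hypothesis there is, for each $i$, a $K$-equivariant SDR $\Phi_i : \hom(\Gamma_i, G) \times [0,1] \to \hom(\Gamma_i, G)$ of $\hom(\Gamma_i, G)$ onto $\hom(\Gamma_i, K)$, and I would set
\[
\Phi\big((\rho_1,\dots,\rho_m), t\big) := \big(\Phi_1(\rho_1,t),\dots,\Phi_m(\rho_m,t)\big).
\]
That $\Phi$ is a strong deformation retraction of $\prod_i \hom(\Gamma_i, G)$ onto $\prod_i \hom(\Gamma_i, K)$ is immediate coordinatewise: at $t=0$ each $\Phi_i$ is the identity, at $t=1$ each $\Phi_i$ lands in $\hom(\Gamma_i, K)$, and each $\Phi_i$ fixes $\hom(\Gamma_i, K)$ throughout, so $\Phi$ inherits all three properties. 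Transporting $\Phi$ back through the isomorphism then yields a $K$-equivariant SDR of $\hom(\Gamma, G)$ onto $\hom(\Gamma, K)$, whence $\Gamma$ is strongly flawed and, by Theorem \ref{thm:strongimplies}, flawed.

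The one step deserving genuine care — and the reason this is not merely an instance of Lemma \ref{lem:product} — is the $K$-equivariance of $\Phi$ for the \emph{diagonal} action. For $k \in K$ acting diagonally, equivariance of each factor gives
\[
\Phi\big((k\rho_1 k^{-1},\dots,k\rho_m k^{-1}), t\big)
= \big(k\,\Phi_1(\rho_1,t)\,k^{-1},\dots, k\,\Phi_m(\rho_m,t)\,k^{-1}\big)
= k\cdot \Phi\big((\rho_1,\dots,\rho_m),t\big)\cdot k^{-1},
\]
so $\Phi$ is equivariant for the single group $K$ acting diagonally. This succeeds precisely because the \emph{same} compact group $K$ acts on every factor; in Lemma \ref{lem:product} one had distinct groups and obtained equivariance only for the product group acting factorwise. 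I would therefore structure the write-up so that the contrast between the diagonal $K$-action and a factorwise $K^m$-action is made explicit at the moment the product homotopy is introduced, since that is exactly the spot where a reader might wrongly expect to need $K^m$ rather than $K$.
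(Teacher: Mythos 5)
Your proposal is correct and is essentially the paper's own argument: both identify $\hom(\Gamma,G)$ with $\prod_i\hom(\Gamma_i,G)$ via the universal property of free products, take the product of the given homotopies, and observe that equivariance for the diagonal copy of $K$ follows because each factor is $K$-equivariant (the paper phrases this as the diagonal $K$-action being contained in the factorwise $K^m$-action). Your explicit verification of diagonal equivariance is just a spelled-out version of that same observation, so there is nothing genuinely different to compare.
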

\begin{proof}
We prove in \cite[Corollary 4.10]{FlLa3} that the hypothesis of this
theorem implies that $\Gamma_{1}*\cdots*\Gamma_{m}$ is flawed. Here,
we provide a short proof of the stronger result. Given $K$-equivariant
strong deformation retracts from $\hom(\Gamma_{i},G)$ to $\hom(\Gamma_{i},K)$
for each $i$, we immediately conclude that there is a $K^{m}$-equivariant
SDR from $\hom(\Gamma,G)\cong\prod_{i=1}^{m}\hom(\Gamma_{i},G)$ to
$\hom(\Gamma,K)\cong\prod_{i=1}^{m}\hom(\Gamma_{i},K)$ since the
product of equivariant SDR's is an SDR that is equivariant with respect
to the product of acting groups. Since the action by $K$ is contained
diagonally in the product action of $K^{m}$, we conclude there is
a $K$-equivariant SDR from $\hom(\Gamma,G)$ onto $\hom(\Gamma,K)$,
as required. \end{proof}

\begin{rem}
\label{freeprod-rem} The proof of Theorem \ref{thm:freeproduct}, can
be directly adapted to the real reductive case as done in \cite{CFLO1,CFLO2}.  In fact, this follows from Theorem \ref{thm:real} with the observation that the SDR commutes with a real structure on $G$.
\end{rem}

\begin{cor}
The class of strongly flawed groups is closed under free product.
\end{cor}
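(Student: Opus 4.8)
The plan is to recognize that this corollary is simply the self-quantified reformulation of Theorem \ref{thm:freeproduct}, so the entire content reduces to unwinding the definition of strongly flawed and verifying that the quantifier over $G$ lines up. No new machinery is needed; the work is purely formal.

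First I would fix an arbitrary reductive $\C$-group $G$ together with a maximal compact subgroup $K\subset G$. Suppose $\Gamma_{1},\dots,\Gamma_{m}$ are strongly flawed groups. By the definition of strongly flawed, this hypothesis means precisely that for \emph{every} such pair $(G,K)$ there is a $K$-equivariant SDR from $\hom(\Gamma_{i},G)$ onto $\hom(\Gamma_{i},K)$; in particular this holds for the pair we have fixed, and for each index $1\leq i\leq m$. These are exactly the hypotheses required to invoke Theorem \ref{thm:freeproduct}.

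Applying Theorem \ref{thm:freeproduct} to this fixed $G$ yields a $K$-equivariant SDR from $\hom(\Gamma_{1}*\cdots*\Gamma_{m},G)$ onto $\hom(\Gamma_{1}*\cdots*\Gamma_{m},K)$. Since $G$ and $K$ were chosen arbitrarily, the free product $\Gamma_{1}*\cdots*\Gamma_{m}$ satisfies the defining condition of strong flawedness for all reductive $\C$-groups, and is therefore strongly flawed.

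There is no genuine obstacle here: the only point worth noting—and it is purely bookkeeping—is that Theorem \ref{thm:freeproduct} is phrased for a single $G$, so one must re-quantify its conclusion over all reductive $\C$-groups $G$. This re-quantification is automatic, because the strong-flawedness of each factor $\Gamma_{i}$ is itself a statement ranging over all such $G$, supplying the needed equivariant SDRs uniformly.
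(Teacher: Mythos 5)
Your proposal is correct and matches the paper exactly: the paper offers no separate argument for this corollary, treating it as an immediate restatement of Theorem \ref{thm:freeproduct}, and your unwinding of the definition of strongly flawed together with the re-quantification over all reductive $\C$-groups $G$ is precisely the (purely formal) content involved.
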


Now, we come to the main result on free products of nilpotent groups.
The lower central series of a group $\Gamma$ is defined inductively
by $\Gamma_{1}:=\Gamma$, and $\Gamma_{i+1}:=[\Gamma,\Gamma_{i}]$
for $i>1$. A group $\Gamma$ is \textit{nilpotent} if the lower central
series terminates to the trivial group. 
\begin{example}
The Heisenberg group 
\[
H(\Z):=\left\{ \left(\begin{array}{ccc}
1 & x & z\\
0 & 1 & y\\
0 & 0 & 1
\end{array}\right)\ |\ x,y,z\in\Z\right\} 
\]
admits the presentation: 
\[
\langle a,b,c\ |\ [a,c]=[b,c]=1,\ [a,b]=c\rangle.
\]
Hence, it is a nilpotent group. 
\end{example}

\begin{thm}\label{thm:main} 
Let $\Gamma$ be isomorphic to a free product of finitely many nilpotent groups, each of which is finitely generated.  Then $\Gamma$ is strongly real flawed. In particular,
if $G$ is be a real reductive $K$-group, then $\X_{\Gamma}(G)$
strong deformation retracts onto $\X_{\Gamma}(K)$. 
\end{thm}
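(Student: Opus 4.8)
The plan is to write $\Gamma\cong\Gamma_{1}*\cdots*\Gamma_{m}$ with each $\Gamma_{i}$ finitely generated nilpotent, handle the factors via \cite{Berg}, and then glue using the free-product stability of strong flawedness (Theorem \ref{thm:freeproduct}), carrying a real structure along the way so that Theorem \ref{thm:real} yields the real statement. Working inside a complexification, fix a reductive $\C$-group $\mathbf{G}$ with maximal compact $K'$ and the real structure $S$ of Section \ref{sub:real-case}. By the main theorem of \cite{Berg}, each finitely generated nilpotent $\Gamma_{i}$ is strongly flawed, so there is a $K'$-equivariant SDR $\Phi_{i}$ from $\hom(\Gamma_{i},\mathbf{G})$ onto $\hom(\Gamma_{i},K')$. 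The additional point I would verify is that $\Phi_{i}$ commutes with $S$. This is not purely formal, but it should be read off from the explicit construction in \cite{Berg}, which---following \cite{FlLa1} and \cite{FlLa4}---is built from norm-decreasing deformations governed by the norm $\norm\ \norm$ attached to $\langle X,Y\rangle=\tr(X^{*}Y)$; since $S$ (complex conjugation in the linear model of the footnote in Section \ref{sub:real-case}) preserves this norm, the deformation it drives is $S$-equivariant.

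Next I would assemble the factors over $\C$. Under the identification $\hom(\Gamma,\mathbf{G})\cong\prod_{i=1}^{m}\hom(\Gamma_{i},\mathbf{G})$, the product $\Phi:=\prod_{i=1}^{m}\Phi_{i}$ is a $(K')^{m}$-equivariant SDR onto $\hom(\Gamma,K')\cong\prod_{i=1}^{m}\hom(\Gamma_{i},K')$, and it commutes with $S$ because a product of $S$-equivariant maps is $S$-equivariant. Restricting the acting group from $(K')^{m}$ to its diagonal copy of $K'$ gives a single $K'$-equivariant SDR from $\hom(\Gamma,\mathbf{G})$ onto $\hom(\Gamma,K')$ that still commutes with $S$; this is precisely Theorem \ref{thm:freeproduct}, so $\Gamma$ is strongly flawed by an $S$-equivariant retraction. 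Following the proof of Theorem \ref{thm:real} (as anticipated in Remark \ref{freeprod-rem}), the $S$-equivariant SDR restricts to $\Fix_{S}$ and yields, for every real reductive $K$-group $G$, a $K$-equivariant SDR from $\hom(\Gamma,G)$ onto $\hom(\Gamma,K)$; that is, $\Gamma$ is strongly real flawed.

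The ``in particular'' clause then follows by passing to quotients: being $K$-equivariant, the retraction restricts to the $K$-stable Kempf--Ness set $\KN_{\Gamma}$ and descends to the quotient by $K$, so the homeomorphism $\X_{\Gamma}(G)\cong\KN_{\Gamma}/K$ of Theorem \ref{thm:SN} turns it into a strong deformation retraction of $\X_{\Gamma}(G)$ onto $\X_{\Gamma}(K)$. I expect the one non-formal step to be the $S$-equivariance claimed in the first paragraph: \cite{Berg} is written over $\C$, and confirming that its norm-based retraction respects the real structure---equivalently, that it preserves the fixed-point locus $\hom(\Gamma,G)=\Fix_{S}(\hom(\Gamma,\mathbf{G}))$---is where the passage from the complex to the real setting genuinely occurs; the surrounding product and quotient manipulations are routine.
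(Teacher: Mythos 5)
Your first two paragraphs track the paper's own argument: Bergeron's theorem for each nilpotent factor, the product-of-SDRs assembly of Theorem \ref{thm:freeproduct} (restricting from $(K')^m$ to the diagonal $K'$), and the real case via Remark \ref{freeprod-rem} and Theorem \ref{thm:real}. One remark: the paper does not actually need the $S$-equivariance verification you flag as the ``one non-formal step,'' because Bergeron's theorem is already stated for \emph{real} reductive groups --- his construction runs through Richardson--Slodowy's Kempf--Ness theory for real groups \cite{RS} --- so each factor $\hom(\Gamma_i,G)$ is handled directly over $\R$ and no descent from the complexification is required for the factors. Your route through $\mathbf{G}$ and $\Fix_S$ is the alternative mentioned in Remark \ref{freeprod-rem}, and is workable, but it makes the argument depend on an equivariance property of Bergeron's retraction that you do not prove.

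The genuine gap is in your final paragraph. You claim that the $K$-equivariant SDR of $\hom(\Gamma,G)$ onto $\hom(\Gamma,K)$ ``restricts to the $K$-stable Kempf--Ness set $\KN_{\Gamma}$ and descends to the quotient by $K$.'' This is false in general, and it is precisely the point the paper's machinery is designed to get around. The Kempf--Ness set of the free product is cut out by the \emph{single} equation $\sum_{i=1}^{r}[g_{i}^{*},g_{i}]=0$ coupling the generators of \emph{all} the free factors (see \eqref{eq:KN}), whereas your retraction is a product of factor-wise retractions, each moving its own factor independently. Such a retraction preserves $\prod_{i}\KN_{\Gamma_{i}}=\mathcal{N}_{\Gamma}$ (tuples of normal elements), but $\mathcal{N}_{\Gamma}$ is in general a \emph{proper} subset of $\KN_{\Gamma}$: already for a free group (all factors $\Z$) the paper remarks that $\KN_{\Gamma}$ is not the set of normal tuples and is very difficult to determine. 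So a point of $\KN_{\Gamma}$ can leave $\KN_{\Gamma}$ at intermediate times, the retraction does not descend to $\X_{\Gamma}(G)\cong\KN_{\Gamma}/K$, and your derivation of the ``in particular'' clause collapses. The correct bridge from strongly flawed to flawed is homotopy-theoretic, and is exactly Theorem \ref{thm:strongimplies} (resting on Theorem \ref{thm:numeric}): the equivariant SDR descends to an SDR of $\Y_{\Gamma}(G)=\hom(\Gamma,G)/K$ onto $\X_{\Gamma}(K)$; together with Theorem \ref{thm:SN} (the inclusion $\KN_{\Gamma}/K\hookrightarrow\Y_{\Gamma}(G)$ is an SDR and $\KN_{\Gamma}/K\cong\X_{\Gamma}(G)$) this yields isomorphisms on all homotopy groups, and Whitehead's theorem applied to the cellular inclusions produces the SDR of $\X_{\Gamma}(G)$ onto $\X_{\Gamma}(K)$. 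Since that machinery is already established in the paper (with real analogues via \cite{CFLO1}), your gap is repaired by citing Theorem \ref{thm:strongimplies} in place of the restriction argument --- but as written, that step would fail.
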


\begin{proof}
As with earlier arguments, we assume that $K$ is a subgroup of $\mathrm{U}(n)$,
$G$ is a subgroup of $\mathrm{GL}(n,\C)$, $g^{*}$ is the complex
conjugate transpose of $g\in G$, and $\rho\in\hom(\Gamma,G)$ is
represented by an $r$-tuple of elements in $G$ since $\Gamma$ is
finitely generated.

Let $\Gamma=\Gamma_{1}*\cdots*\Gamma_{m}$ with every $\Gamma_{i}$ finitely generated and nilpotent. In Bergeron's paper, \cite[Theorem 1]{Berg} it is shown
that there exists a $K$-equivariant strong deformation retraction
of $\hom(\Gamma_{i},G)$ to $\hom(\Gamma_{i},K)$ when $\Gamma$ is
a finitely generated nilpotent group. The main idea of the proof in
\cite{Berg} is this: 
\begin{enumerate}
\item For a real reductive group $G$ acting on a real algebraic variety
$V$, there is a also a \textit{Kempf-Ness} set $\mathcal{KN}_{V}\subset V$
such that there exists a $K$-equivariant strong deformation retraction
from $V$ to $\mathcal{KN}_{V}$; see \cite{RS}. This idea was first
used to study the topology of representation spaces in \cite{FlLa1}
and its generalizations \cite{CFLO1,CFLO2}. 
\item The important observation in \cite{Berg} is that for $\Gamma$ a
nilpotent group one can take $\mathcal{KN}_{\hom(\Gamma,G)}$ to be
$\mathcal{N}_{\Gamma}:=$ 
\[
\{\rho\in\hom(\Gamma,G)\ |\ \rho(\Gamma)\text{ consists of normal elements}\},
\]
where $g\in G$ is normal if and only if $gg^{*}=g^{*}g$. In \cite[Proposition 4.7/4.8]{CFLO1}
the Kempf-Ness set is described generally for $\hom(\Gamma,G)$ where
$\Gamma$ is any finitely generated group. From this description it
is clear that $\mathcal{N}_{\Gamma}\subset\mathcal{KN}_{\hom(\Gamma,G)}$
for any finitely generated $\Gamma$. However, it takes more work
to show these sets are equal if $\Gamma$ is nilpotent. 
\item Therefore, one has a $K$-equivariant strong deformation retraction
from $\hom(\Gamma,G)$ to the set $\mathcal{N}_{\Gamma}$. 
\item Lastly, following \cite{PeSo}, as done in \cite[Section 4]{FlLa4}
for the case where $\Gamma$ is abelian, one shows there is a $K$-equivariant
strong deformation retraction from $\mathcal{N}_{\Gamma}$ onto $\hom(\Gamma,K)$
by applying the scaling SDR $\mathbb{C}^{*}$ to $S^{1}$ to the eigenvalues
of the components of $\rho$. Although it is fairly clear that this
makes sense when $\Gamma$ is abelian, it takes more work to show
this SDR applies in the nilpotent case. 
\end{enumerate}
Thus, Theorem \ref{thm:main} now follows from Theorem \ref{thm:freeproduct}
and Remark \ref{freeprod-rem}.\end{proof}
\begin{cor}
\label{cor:main}A finitely generated group isomorphic to a free product
of nilpotent groups is real flawed. 
\end{cor}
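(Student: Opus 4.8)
The plan is to deduce Corollary~\ref{cor:main} directly from Theorem~\ref{thm:main}; the only gap to bridge is between the two statements of hypothesis. Theorem~\ref{thm:main} assumes a free product of \emph{finitely many} nilpotent groups, \emph{each of which is finitely generated}, whereas the corollary assumes merely that the ambient group $\Gamma$ is finitely generated and abstractly isomorphic to \emph{some} free product of nilpotent groups (a priori with infinitely many, or non--finitely--generated, factors). So the first and essentially only step is a purely group-theoretic normalization of the decomposition.

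Write $\Gamma\cong\ast_{i\in I}\Gamma_{i}$ with each $\Gamma_{i}$ nilpotent. I would invoke the Grushko--Neumann theorem, whose rank-additivity formula gives $\rk(\ast_{i\in I}\Gamma_{i})=\sum_{i\in I}\rk(\Gamma_{i})$. Since $\Gamma$ is finitely generated its rank is finite, so the set $\{\,i\in I : \Gamma_{i}\neq 1\,\}$ of nontrivial factors is finite and each such $\Gamma_{i}$ has finite rank, i.e.\ is finitely generated. Discarding the trivial factors (which do not change the free product) rewrites $\Gamma$ as a free product of finitely many finitely generated nilpotent groups, which is exactly the hypothesis of Theorem~\ref{thm:main}.

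With the decomposition normalized, Theorem~\ref{thm:main} applies verbatim and shows that $\Gamma$ is strongly real flawed; in particular its ``in particular'' clause asserts that $\X_{\Gamma}(G)$ strong deformation retracts onto $\X_{\Gamma}(K)$ for every real reductive $K$-group $G$, which is precisely the definition of real flawed. (If one prefers, this last implication is just the real analogue of Theorem~\ref{thm:strongimplies}, namely strongly real flawed $\Rightarrow$ real flawed.) I do not expect a genuine obstacle here: the content of the corollary beyond Theorem~\ref{thm:main} is the single observation that finite generation of $\Gamma$ automatically forces any free-product decomposition into nilpotent groups to have finitely many, finitely generated, factors, and the only point requiring care is citing Grushko's theorem correctly so as to rule out both infinitely many nontrivial factors and non--finitely--generated ones.
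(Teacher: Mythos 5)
Your proposal is correct and takes essentially the same route as the paper, which states this corollary without a separate proof as an immediate consequence of Theorem \ref{thm:main}; your Grushko--Neumann step supplies exactly the normalization (finitely many nontrivial factors, each finitely generated) that the paper leaves implicit. As a minor remark, Grushko is not strictly needed for this reduction: retracting onto any factor not meeting a finite generating set kills every generator, so all but finitely many factors are trivial, and each remaining factor, being a retract and hence a homomorphic image of the finitely generated group $\Gamma$, is itself finitely generated.
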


\begin{cor}
The modular group $\PSL(2,\Z)$ is flawed. \end{cor}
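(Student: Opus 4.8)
The plan is to recognize $\PSL(2,\Z)$ as a free product of finitely generated nilpotent groups and then invoke Theorem \ref{thm:main} (equivalently, Corollary \ref{cor:main}) directly. The classical structural fact I would use is the isomorphism $\PSL(2,\Z)\cong (\Z/2\Z)*(\Z/3\Z)$, which exhibits the modular group as the free product of a cyclic group of order $2$ and a cyclic group of order $3$. This decomposition follows from the standard presentation $\PSL(2,\Z)=\langle S,U \mid S^{2}=U^{3}=1\rangle$, where $S$ has order $2$, $U$ has order $3$, and there are no further relations tying the two factors together; in the usual generators $S=\left(\begin{smallmatrix}0&-1\\1&0\end{smallmatrix}\right)$ and $T=\left(\begin{smallmatrix}1&1\\0&1\end{smallmatrix}\right)$ one takes $U=ST$.

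First I would observe that each free factor $\Z/2\Z$ and $\Z/3\Z$ is a finite cyclic group, hence abelian, hence nilpotent, and each is trivially finitely generated. Therefore $\PSL(2,\Z)$ is isomorphic to a free product of finitely many finitely generated nilpotent groups. I would then apply Theorem \ref{thm:main}, which asserts that any such free product is strongly real flawed, and in particular flawed. This immediately yields the conclusion.

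There is essentially no obstacle to overcome: the only nontrivial ingredient is the free-product description of the modular group, which is a well-known classical result, and all the substantive work has already been carried out in Theorem \ref{thm:main}. If anything, the ``hard part'' is purely bookkeeping, namely confirming that the cyclic factors satisfy the finitely-generated-nilpotent hypothesis, which is immediate. Hence the corollary is a direct specialization of the main theorem applied to the decomposition $\PSL(2,\Z)\cong(\Z/2\Z)*(\Z/3\Z)$.
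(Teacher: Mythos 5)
Your proof is correct and is essentially identical to the paper's own argument: both use the classical decomposition $\PSL(2,\Z)\cong\Z_{2}*\Z_{3}$, observe that finite cyclic groups are finitely generated nilpotent, and invoke Theorem \ref{thm:main} (via Corollary \ref{cor:main}).
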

\begin{proof}
$\PSL(2,\Z)$ is isomorphic to the free product of $\mathbb{Z}_{2}$
and $\mathbb{Z}_{3}$. Since the free product of finitely many finite
cyclic groups is an example of a free product of nilpotent groups,
we conclude that the modular group is flawed by the above corollary. \end{proof}
\begin{rem}
Theorem \ref{thm:main} includes finitely generated groups that are
free groups, abelian groups, nilpotent groups, and free products of
cyclic groups. Hence this theorem unifies all prior known cases of
$($non-finite$)$ flawed groups. Also, it gives further evidence
that RAAGs with torsion are flawed since free products of cyclic groups
are RAAGs with torsion but were not before now known to be flawed. 
\end{rem}
There is a class of groups that includes both RAAGs and free products
of nilpotent groups, namely, the \textit{graph product of nilpotent
groups}. Let $Q=(V,E)$ be a finite graph and $\{G_{v}\ |v\in V\}$
a collection of finitely generated nilpotent groups. The graph product
of the $G_{v}$'s with respect to the graph $Q$ is defined as $F/N$
where $F$ is the free product of all the $G_{v}'$s and $N$ is the
normal subgroup generated by subgroups of the form $[G_{u},G_{v}]$
whenever there is an edge joining $u$ and $v$. 
\begin{conjecture}
If $\Gamma$ is a finitely presentable group that is isomorphic to
a graph product of nilpotent groups, then $\Gamma$ is flawed. 
\end{conjecture}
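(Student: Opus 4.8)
The plan is to prove the stronger statement that a finitely generated graph product of nilpotent groups is strongly (real) flawed, and then to invoke Theorem \ref{thm:strongimplies} together with Theorem \ref{thm:real} to conclude flawedness; this is precisely the route taken for the two extreme cases of a graph product, namely free products (Theorem \ref{thm:freeproduct}) and nilpotent groups (Theorem \ref{thm:main}). Writing $\Gamma=\Gamma(Q)$ for the graph product over $Q=(V,E)$ with nilpotent vertex groups $G_v$, the first step is to record the defining description
\[
\hom(\Gamma,G)=\Big\{(\rho_v)_{v\in V}\in\textstyle\prod_{v\in V}\hom(G_v,G)\ \big|\ [\rho_u(G_u),\rho_v(G_v)]=1\ \text{ whenever } uv\in E\Big\},
\]
exhibiting $\hom(\Gamma,G)$ as the closed $G$-stable subvariety of the product cut out by the edge-commutativity relations. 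The empty graph recovers the free product $\prod_{v}\hom(G_v,G)$ of Theorem \ref{thm:freeproduct}, while the complete graph recovers a direct (hence nilpotent) product covered by Theorem \ref{thm:main}; the content of the conjecture lies entirely in interpolating between these.

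The retraction would be built in two stages. First, by the Schwarz--Neeman theorem (Theorem \ref{schwarzneeeman}) in its real form (see \cite{RS} and the discussion in the proof of Theorem \ref{thm:main}), there is a canonical $K$-equivariant SDR from $\hom(\Gamma,G)$ onto its Kempf--Ness set $\KN_{\Gamma}$; since this retraction is produced by the Kempf--Ness flow and $\hom(\Gamma,G)$ is $G$-stable, it stays inside the variety and therefore \emph{automatically} preserves the edge-commutativity relations. Second, one must produce a $K$-equivariant SDR from $\KN_{\Gamma}$ onto $\hom(\Gamma,K)$. The model for this is the eigenvalue-scaling homotopy $\mathbb{C}^{*}\to S^{1}$ used in \cite{FlLa1,FlLa4,Berg}: along each edge $uv$ the commuting families $\rho_u(G_u),\rho_v(G_v)$ can (on $\KN_{\Gamma}$) be rescaled in a common unitary eigenbasis, so the scaling preserves commutativity, while inside each vertex group the nilpotent scaling argument of \cite{Berg} should apply.

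The hard part will be this second stage, for two related reasons. First, unlike the purely nilpotent case, one cannot expect $\KN_{\Gamma}$ to coincide with the representations of normal image: already for a free product of two copies of $\mathbb{Z}$ the Kempf--Ness condition $[a^{*},a]+[b^{*},b]=0$ does not force $a,b$ to be normal, so the graph-product Kempf--Ness set genuinely interpolates between the balanced-moment description of the free case and the normal-elements description of the nilpotent case. Thus the scaling SDR must be defined directly on $\KN_{\Gamma}$ and shown to be continuous and $K$-equivariant \emph{through eigenvalue collisions}, while simultaneously respecting the nilpotent relations within each $G_v$ and the commutation across each edge of $Q$. Second, any attempt to argue inductively via the standard amalgam decomposition $\Gamma(Q)\cong\Gamma(Q-v)*_{\Gamma(\mathrm{lk}\,v)}(\Gamma(\mathrm{lk}\,v)\times G_v)$ runs into the fact that the associated representation variety is a fiber product over $\hom(\Gamma(\mathrm{lk}\,v),G)$, and strong deformation retractions do not in general descend to fiber products; controlling the gluing along the link is exactly where the graph structure must be exploited.

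Finally, once strong flawedness is obtained for all reductive $\C$-groups $G$, the real case follows from Theorem \ref{thm:real} provided the constructed SDR commutes with a real structure (as it does in \cite{Berg,CFLO1}), and one may further reduce to simple simply connected $G$ via Theorem \ref{simpletored}. Since graph products of cyclic groups (RAAGs with torsion) are exponent-canceling, Remark \ref{rem-bootsc} would then upgrade the conclusion from special flawed to flawed in that subclass, recovering and extending Theorem \ref{main:thm2}.
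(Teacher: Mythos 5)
There is a fundamental issue here: the statement you are asked to prove is stated in the paper as a \emph{conjecture}, and the paper offers no proof of it --- nor does your proposal. What you have written is a research program, not a proof, and you say so yourself (``The hard part will be this second stage''). Your first stage is fine: the Schwarz--Neeman/Richardson--Slodowy retraction of $\hom(\Gamma,G)$ onto $\KN_{\Gamma}$ is $K$-equivariant and takes place inside the representation variety, so edge relations are preserved for free. But the entire content of the conjecture sits in your second stage, the $K$-equivariant SDR from $\KN_{\Gamma}$ onto $\hom(\Gamma,K)$, and you never construct it. Worse, the mechanism you propose is unsupported: you suggest rescaling eigenvalues ``in a common unitary eigenbasis'' along each edge, but elements of $\KN_{\Gamma}$ need not be normal (as you yourself observe for $\mathbb{Z}\ast\mathbb{Z}$), so commuting images need not be simultaneously unitarily diagonalizable, and the scaling map is not even well defined on $\KN_{\Gamma}$, let alone continuous through eigenvalue collisions or compatible with the nilpotent relations inside each vertex group. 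This is precisely the step the paper records as open: in its strategy for RAAGs with torsion (the special case where every vertex group is cyclic), step (3) --- a $K$-equivariant weak retraction from $\hom(\Gamma,G_{K})$ to $\hom(\Gamma,K)$ --- ``remains an open problem,'' and the paper only succeeds in closing it for star-shaped RAAGs (Theorem \ref{thm:star-RAAG}) and, in the weaker ``special flawed'' sense, for connected RAAGs (Theorem \ref{thm:connectedRAAG}).

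Moreover, the paper gives a concrete warning that the normal-elements/scaling template you want to transplant from \cite{Berg} can fail outside the nilpotent world: for $\Gamma=\mathbb{Z}\rtimes\mathbb{Z}_{4}$ the Kempf--Ness set of $\hom(\Gamma,G)$ is \emph{not} equal to $\mathcal{N}_{\Gamma}$, which the authors cite exactly to show ``the strategy used above will not work'' for broader classes. Your diagnosis of the obstacles (the interpolating nature of $\KN_{\Gamma}$ between the free and nilpotent descriptions, and the failure of SDRs to descend to the fiber products arising from the amalgam decomposition $\Gamma(Q)\cong\Gamma(Q-v)\ast_{\Gamma(\mathrm{lk}\,v)}(\Gamma(\mathrm{lk}\,v)\times G_{v})$) is accurate and matches why the paper leaves this as a conjecture; but identifying the obstruction is not overcoming it. As it stands, your proposal proves nothing beyond what Theorems \ref{thm:freeproduct}, \ref{thm:main}, \ref{thm:star-RAAG}, and \ref{thm:connectedRAAG} already establish for the extremal and special graphs.
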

The next example emphasizes that the above conjecture still does not
unify all known cases and conjectures about flawed groups. 
\begin{example}
Let $F$ be a finite group that is not nilpotent. Since $F$ is finite
it is flawed. If it were a free product $A*B$ with both $A,B$ non-trivial
then it would be infinite. Hence, since it is not nilpotent, it is
not in the class of groups isomorphic to a graph product of nilpotent
groups. \end{example}
\begin{rem}
The first theorem in \cite{bestvina2020free} states that if a group
$\Gamma$ acts on a simplicial tree $T$ without inversions and with
trivial edge stabilizers, and is generated by the vertex stabilizers
$\Gamma_{v}$, then there is a subset $\mathcal{O}$ of the vertices
of $T$ intersecting each $\Gamma$-orbit in one vertex such that
$G$ is isomorphic to a free product $*_{v\in\mathcal{O}}G_{v}$.
From this point-of-view, Theorem \ref{thm:main} says that finitely
presented groups acting in this way on trees with nilpotent stabilizers
are flawed. 
\end{rem}

\section{Right Angled Artin Groups}

\label{sec:raag}As in the previous section, let $G$ be a reductive
$\C$-group, and let $K\subset G$ be a maximal compact subgroup.
We continue with the assumption (without loss of generality) that
$K\subset\SU(n)$ and $G\subset\SL(n,\C)$.

A \textit{Right Angled Artin Group} (RAAG) is a finitely presented
group having only commutator relations: $ab=ba$. Associated to any
RAAG $\Gamma$ is a graph $Q$ whose vertices correspond to generators
of $\Gamma$ and whose edges correspond to relations in $\Gamma$.
Conversely, given a finite graph $Q$, there exists a RAAG $\Gamma_{Q}$
whose generators correspond to the vertices of $Q$ and whose commutator
relations correspond to the edges of $Q$. A RAAG \textit{with torsion}
is a finitely presented group in which all relations are either commutators or torsion relators ($a^{n}=1$); these are exactly finite graph products of cyclic groups.

Free products of finitely many cyclic groups (no edges in $Q$) and
finitely generated abelian groups ($Q$ is a complete graph) are both
extremal examples of RAAGs (with torsion). Since both these classes
of groups are flawed, in \cite{FlLa4} we conjectured that RAAGs (with
torsion) are flawed. Theorem \ref{cor:main} gives further evidence
of this conjecture. We now summarize a strategy to prove that RAAGs
with torsion are flawed (in the outline $\Gamma$ is a RAAG with torsion).
Define the \textit{elliptic} elements in $G$ to be 
\begin{equation}
{\displaystyle G_{K}:=\bigcup_{g\in G}gKg^{-1}}\label{eq:elliptic}
\end{equation}
(see \cite{Kos}) and let $G_{ss}$ denote the set of semisimple elements
in $G$.
\begin{enumerate}
\item A weak deformation retraction between a space $X$ and a subspace
$A$ is a continuous family of mappings $F_{t}:X\to X$, $t\in[0,1]$,
such that $F_{0}$ is the identity on $X$, $F_{1}(X)\subset A$,
and $F_{t}(A)\subset A$ for all $t$. Define 
\[
\hom(\Gamma,G_{ss}):=\{\rho\in\hom(\Gamma,G)\ |\ \rho(\gamma_{1}),...,\rho(\gamma_{r})\in G_{ss}\}.
\]
Using ideas from \cite{PeSo}, \cite[Lemma 4.15]{FlLa4} proves there
exists a $G$-equivariant weak deformation retraction from $\hom(\Gamma,G)$
onto $\hom(\Gamma,G_{ss})$ that fixes $K$ during the retraction. 
\item Define 
\[
\hom(\Gamma,G_{K}):=\{\rho\in\hom(\Gamma,G)\ |\ \rho(\gamma_{1}),...,\rho(\gamma_{r})\in G_{K}\}.
\]
Using the previous step \cite[Theorem 4.16]{FlLa4} proves that when
$\Gamma$ is a RAAG with torsion, $\X_{\Gamma}(G)$ strongly deformation
retracts onto $\hom(\Gamma,G_{K})\quot G$ and fixes the subspace
$\X_{\Gamma}(K)$ for all time. 
\item By Theorem \ref{thm:numeric}, it remains to prove that there exists
a $K$-equivariant weak retraction from $\hom(\Gamma,G_{K})$ to $\hom(\Gamma,K)$
when $\Gamma$ is a RAAG (with torsion). This step remains an open
problem. 
\end{enumerate}
The above steps provide proof for: 
\begin{thm}
{\cite[Theorem 4.18]{FlLa4}}\label{thm-abelianpaper} Let $\Gamma$
be a RAAG with torsion, $G$ be a reductive $\C$-group and let $K$
be a maximal compact subgroup of $G$. If there exists a $K$-equivariant
weak retraction from $\hom(\Gamma,G_{K})$ to $\hom(\Gamma,K)$ for
all $G$, then $\Gamma$ is flawed. 
\end{thm}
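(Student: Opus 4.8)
The plan is to deduce flawedness from the numerical criterion of Theorem \ref{thm:numeric}: it suffices to show that the inclusion $\X_\Gamma(K)\hookrightarrow\KN_\Gamma/K\cong\X_\Gamma(G)$ (equivalently $\eta:\X_\Gamma(K)\to\Y_\Gamma(G)$) induces isomorphisms on every homotopy group, since Whitehead's theorem --- applied exactly as in the proof of Theorem \ref{thm:numeric}, using that all inclusions are cellular onto subcomplexes --- then promotes this to the required strong deformation retraction. As the hypothesis is assumed for every reductive $\C$-group $G$, establishing the homotopy-group isomorphisms for each $G$ yields flawedness.

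First I would assemble the two reductions supplied by \cite{FlLa4}. Step (1), \cite[Lemma 4.15]{FlLa4}, furnishes a $G$-equivariant (hence $K$-equivariant) weak deformation retraction of $\hom(\Gamma,G)$ onto $\hom(\Gamma,G_{ss})$ that fixes $\hom(\Gamma,K)$ throughout; feeding this into Step (2), \cite[Theorem 4.16]{FlLa4} uses the RAAG-with-torsion hypothesis to strongly deformation retract $\X_\Gamma(G)$ onto the subspace $\hom(\Gamma,G_K)\quot G$ while fixing $\X_\Gamma(K)$ for all time. Because this retraction is rel $\X_\Gamma(K)$, proving $\eta$ a homotopy equivalence reduces to proving that $\X_\Gamma(K)\hookrightarrow\hom(\Gamma,G_K)\quot G$ induces homotopy-group isomorphisms.

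At this stage I would bring in the hypothesis, Step (3): the $K$-equivariant weak retraction of $\hom(\Gamma,G_K)$ onto $\hom(\Gamma,K)$. Passing to $K$-orbit spaces turns it into a weak deformation retraction of $\hom(\Gamma,G_K)/K$ onto $\X_\Gamma(K)$, hence a homotopy equivalence compatible with the inclusion. The remaining link is a Schwarz-Neeman-type identification: just as Theorem \ref{thm:SN} gives $\hom(\Gamma,G)/K\simeq\X_\Gamma(G)$ via the Kempf-Ness set, I would argue that the subspace $\hom(\Gamma,G_K)\quot G$ of $\X_\Gamma(G)$ is modeled, up to weak homotopy, by the orbit space $\hom(\Gamma,G_K)/K$ --- here one uses that the elements of $G_K$ are elliptic, hence semisimple, so the minimal-vector picture restricts well to $\hom(\Gamma,G_K)$. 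Composing the homotopy equivalence of Step (3), this identification, and the SDR of Step (2) then shows $\eta$ induces homotopy-group isomorphisms, and Theorem \ref{thm:numeric} completes the argument.

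I expect the main obstacle --- setting aside Step (3) itself, which the surrounding discussion singles out as an open problem --- to lie in making these reductions genuinely composable. One must verify that the weak retraction of Step (3) is compatible with the conjugation action finely enough to descend to, and match, the Kempf-Ness description of $\hom(\Gamma,G_K)\quot G$, that all maps respect compatible basepoints across the possibly many components of $\hom(\Gamma,G_K)$, and that the cellular (subcomplex) structure demanded by Theorem \ref{thm:numeric} survives each reduction so the concluding Whitehead upgrade is legitimate. The delicate point throughout is that flawedness requires the inclusion $\X_\Gamma(K)\hookrightarrow\X_\Gamma(G)$ to be a retraction fixing $\X_\Gamma(K)$, not merely an abstract homotopy equivalence, so every intermediate retraction must be taken relative to $\X_\Gamma(K)$.
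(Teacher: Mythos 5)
Your proposal is correct and follows essentially the same route as the paper: the paper's proof is exactly the three-step outline you assemble (Steps (1)--(2) from \cite{FlLa4} giving the SDR of $\X_{\Gamma}(G)$ onto $\hom(\Gamma,G_{K})\quot G$ rel $\X_{\Gamma}(K)$, plus the hypothesized $K$-equivariant weak retraction fed into the numerical criterion of Theorem \ref{thm:numeric}). The connecting identification you single out --- that $\hom(\Gamma,G_{K})\quot G$ is modeled up to homotopy by $\hom(\Gamma,G_{K})/K$ via the Kempf--Ness picture, which works because elliptic elements are semisimple, so their conjugacy classes are closed and orbit-closure limits of elliptic representations remain elliptic --- is precisely the detail the paper delegates to \cite[Theorem 4.18]{FlLa4}, and your reasoning for it is sound.
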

We now illustrate that this last step holds for RAAGs we call \textit{star
shaped}.

\subsection{Star Shaped RAAGs are Flawed}\label{subsec-star}

As described above, given a graph $Q=(V,E)$ with vertex set $V=\{1,\cdots,r\}$
and edge set $E$ (consisting of cardinality 2 subsets of $V$), we
define the \emph{RAAG of $Q$} as the finitely presented group: 
\[
\Gamma_{Q}:=\langle a_{1},\cdots,a_{r}\,|\,[a_{i},a_{j}]=1\mbox{ iff }\{i,j\}\in E\rangle.
\]

\begin{defn}
\label{def:star} Let $(V,E)$ be a star graph, that is $V=\{0,1,\cdots,r\}$,
and the distinguished vertex $0\in V$ is connected to every other
vertex and there are no further edges (in particular, such a graph
is connected). The RAAG associated to this star graph will be called
the \emph{star shaped RAAG} of rank $r+1$. It has the presentation:
\[
\Gamma_{\star}:=\langle a_{0},\cdots,a_{r}\,|\,[a_{0},a_{i}]=1\ \forall i=1,\cdots,r\rangle.
\]

\end{defn}
As before, fix $K$ and $T$, maximal compact and maximal torus, respectively,
of the reductive $\mathbb{C}$-group $G$. Without loss of generality,
consider the Cartan involution on $\SL(n,\mathbb{C})$, given by inverse
conjugate transpose, and its restriction to $G\subset\SL(n,\mathbb{C})$
so that $K=\Fix_{\Theta}G\subset\SU(n)$. 

The torus $T$ can be decomposed into its compact and positive parts
\[
T=T_{K}A,
\]
where $T_{K}=T\cap K=\Fix_{\Theta}T$ is a maximal torus of $K$,
and $A$ is a ``positive real torus'' (e.g. when $G=\GL(n,\mathbb{C})$,
$T$ the diagonal torus, $A$ consists of diagonal matrices with real
positive entries, written as exponentials). 

Let us recall the KAK decomposition on a reductive group (see \cite{knapp}).
Define $^{*}:G\to G$ by $g^{*}:=\Theta(g)^{-1}$, so that $k^{*}=k^{-1}$
for $k\in K$. 
\begin{prop}
Let $G$ be a reductive $\C$-group. Then, every element $g\in G$
may be written as $g=kah^{*}$ for some $k,h\in K$ and $a\in A$.
Moreover, the restricted exponential $\exp:\mathfrak{a}\to A$ $($where
$\mathfrak{a}$ is the Lie algebra of $A)$ is a diffeomorphism and
the element $a\in A$ is unique up to conjugation by the Weyl group
$W$. 
\end{prop}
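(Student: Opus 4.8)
The plan is to reduce the statement to two standard pieces of structure theory for the connected reductive group $G$: the global Cartan (polar) decomposition, and the $K$-conjugacy of the maximal abelian subspaces of $\liep$. Throughout I write $\theta=d\Theta$ for the Lie-algebra involution, $\lieg=\liek\oplus\liep$ for the associated Cartan decomposition, and recall that $\mathfrak{a}\subset\liep$ is a maximal abelian subspace with $A=\exp(\mathfrak{a})$.

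First I would invoke the global Cartan decomposition (see \cite{knapp}): since $G$ is connected and reductive, the map $K\times\liep\to G$, $(k,X)\mapsto k\exp(X)$, is a diffeomorphism, so every $g\in G$ has a unique factorization $g=k_{1}\exp(X)$ with $k_{1}\in K$ and $X\in\liep$. Concretely this is the restriction to $G$ of the polar decomposition on $\SL(n,\C)$ relative to $\Theta$. Next I would use that $\liep=\bigcup_{h\in K}\Ad(h)\mathfrak{a}$: every element of $\liep$ lies in some maximal abelian subspace, and all such subspaces are $K$-conjugate. Writing $X=\Ad(h)Y$ with $h\in K$ and $Y\in\mathfrak{a}$ gives $\exp(X)=h\exp(Y)h^{-1}=hah^{-1}$ with $a:=\exp(Y)\in A$, so that $g=(k_{1}h)\,a\,h^{-1}$. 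Setting $k:=k_{1}h\in K$ and noting that $h^{-1}=h^{*}$ for $h\in K$ (because $\Theta(h)=h$, hence $h^{*}=\Theta(h)^{-1}=h^{-1}$), this yields $g=kah^{*}$, the desired form.

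For the diffeomorphism assertion, $\exp|_{\mathfrak{a}}$ is surjective onto $A$ by definition of $A$, and injective as the restriction of the injective map $\exp|_{\liep}$ coming from the Cartan decomposition; moreover each $Y\in\mathfrak{a}\subset\liep$ has $\mathrm{ad}(Y)$ with real spectrum, so $d\exp$ is everywhere invertible. A smooth bijection that is a local diffeomorphism is a diffeomorphism (coordinate-wise, on the positive real torus, this is just the map $\R\to\R_{>0}$). For uniqueness up to $W$ I would compute $g^{*}g$: since $\Theta(a)=\exp(-Y)=a^{-1}$ gives $a^{*}=a$, and $k^{*}=k^{-1}$, an anti-automorphism computation yields $g^{*}g=ha^{2}h^{-1}$. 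Thus $g^{*}g$ determines the $K$-conjugacy class of $a^{2}\in A$; since squaring is injective on $A$ (positivity gives unique positive square roots), and two elements of $A$ are $K$-conjugate exactly when they lie in a common $W$-orbit, $a$ is determined by $g$ up to conjugation by $W$.

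The substantive inputs, and hence the main obstacle were one to aim for a self-contained argument, are the global Cartan decomposition (with uniqueness of the $\liep$-factor) and the conjugacy $\liep=\bigcup_{h\in K}\Ad(h)\mathfrak{a}$; both are standard and I would cite them from \cite{knapp}. The only genuinely delicate bookkeeping is the identification of $K$-conjugacy in $A$ with $W$-orbits invoked in the uniqueness step, which is again classical but should be stated explicitly.
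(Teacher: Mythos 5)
Your proof is correct: the reduction to the global Cartan decomposition $G\cong K\times\exp(\liep)$, the passage from $X\in\liep$ to $\Ad(h)\mathfrak{a}$ via $K$-conjugacy of maximal abelian subspaces, and the uniqueness argument computing $g^{*}g=ha^{2}h^{-1}$ together with the classical fact that $K$-conjugacy in $A$ coincides with $W$-conjugacy are all sound, and these are precisely the standard ingredients. Note, however, that the paper gives no proof of this proposition at all --- it is stated as a recollection of the KAK decomposition with a citation to \cite{knapp} --- so your argument is a correct unwinding of exactly the structure theory the paper invokes, with the one genuinely classical input (conjugate elements of $A$ lie in the same $W$-orbit) properly flagged as needing a citation.
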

We need the following result. 
\begin{prop}
\label{prop:commuting-deformation} If $g\in K$ and it commutes with
$ke^{x}h^{*}\in G$ $($$k,h\in K$$)$ then $g$ commutes with $ke^{tx}h^{*}$,
for every $t\in\mathbb{R}$. 
\end{prop}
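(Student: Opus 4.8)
The plan is to reduce everything to the Cartan (polar) decomposition and then transport the commuting relation along the one-parameter family $t\mapsto ke^{tx}h^{*}$. I work in the matrix model fixed earlier in the paper, so that $K\subset\SU(n)$, $g^{*}$ is the conjugate transpose, and $\liep$ consists of Hermitian matrices; since $x\in\mathfrak{a}\subset\liep$ is Hermitian, $e^{x}$ is positive definite Hermitian and $(e^{x})^{*}=e^{x}$. Write $w:=ke^{x}h^{*}$ and, for each $t\in\R$, $w_{t}:=ke^{tx}h^{*}$, noting that $h^{*}=h^{-1}$ because $h\in K$.

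First I would exploit that $g\in K$ is unitary: from $gwg^{-1}=w$, taking conjugate transposes and using $g^{*}=g^{-1}$ gives $gw^{*}g^{-1}=w^{*}$, so $g$ commutes with $w^{*}$ as well. Consequently $g$ commutes with the product $w^{*}w=he^{2x}h^{*}$. Setting $\xi:=\Ad(h)x\in\liep$ (conjugation by $K$ preserves $\liep$), this positive definite Hermitian element is exactly $\exp(2\xi)$.

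Next, since $P:=\exp(2\xi)$ is positive definite Hermitian, commuting with $P$ is equivalent to commuting with each of its spectral projections, hence with every real power $P^{s}$ and with $\log P=2\xi$. In particular $g$ commutes with $\exp(t\xi)=he^{tx}h^{*}=:p_{t}$ for all $t\in\R$. Finally, observe that $g$ commutes with $w=w_{1}$ and with $p_{1}=he^{x}h^{*}$, hence with $u:=wp_{1}^{-1}=kh^{*}\in K$; since $w_{t}=u\,p_{t}$ is a product of two elements that commute with $g$, it follows that $g$ commutes with $w_{t}$ for every $t$, as claimed.

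The only genuinely nonformal step is the functional-calculus passage ``$g$ commutes with $\exp(2\xi)$ $\Rightarrow$ $g$ commutes with $\exp(t\xi)$,'' and this is exactly where Hermitian positivity is used: it guarantees that $\exp$ is injective on $\liep$, so that $\log$ is well defined and commuting with a positive operator is the same as commuting with its logarithm. I would also note, for bookkeeping, that the decomposition $w=u\exp(\xi)$ is the global Cartan decomposition intrinsic to $G$, so $u\in K$ and $\exp(\xi)\in\exp(\liep)$ genuinely lie in $G$ and no step leaves the group.
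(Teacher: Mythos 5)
Your proof is correct, and it takes a genuinely different route from the paper's. The paper argues by ``passing $g$ through'' the word $ke^{x}h^{*}$: it sets $g_{1}=k^{*}gk$, $g_{2}=e^{-x}g_{1}e^{x}$, $g_{3}=hg_{2}h^{*}$, uses the hypothesis to get $g_{3}=g\in K$ (hence $g_{2}\in K$), and then applies a sandwich lemma (Lemma \ref{lem:k-comm-ex}: $e^{-x}ke^{x}\in K$ forces $k$ to commute with $e^{x}$) together with uniqueness of real powers of positive definite matrices (Lemma \ref{lem:commuting-A}) to propagate commutation from $e^{x}$ to $e^{tx}$. You instead take adjoints --- a unitary $g$ commuting with $w=ke^{x}h^{*}$ also commutes with $w^{*}$, hence with $w^{*}w=\exp(2\xi)$ where $\xi=\Ad(h)x\in\liep$ --- and run the spectral calculus to commute $g$ with the entire family $\exp(t\xi)$, hence with the unitary polar factor $u=kh^{*}$ and finally with $w_{t}=u\exp(t\xi)$. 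The engine is the same in both arguments: commuting with a positive definite Hermitian matrix is the same as commuting with all of its real powers, which is exactly the content of the paper's Lemma \ref{lem:commuting-A}, and the paper's Lemma \ref{lem:k-comm-ex} is proved by the same $vv^{*}$ computation you use to form $w^{*}w$. What your packaging buys is brevity and a conceptual explanation: a unitary commuting with $w$ commutes with both factors of the polar decomposition of $w$, and the curve $t\mapsto ke^{tx}h^{*}$ is precisely that unitary factor times the positive path $\exp(t\xi)$, so the whole curve sits in the commutant of $g$ for structural reasons. What the paper's version buys is two elementary, reusable lemmas phrased directly in the $KAK$ coordinates that it also deploys in the proofs of Theorems \ref{thm:star-RAAG} and \ref{thm:freebyfinite}.
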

For the proof we use two lemmata. 
\begin{lem}
\label{lem:commuting-A} If $k\in K$ commutes with $e^{x}\in A$
then it commutes with $e^{tx}$ for every $t\in\mathbb{R}$.\end{lem}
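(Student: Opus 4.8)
The plan is to reduce the group-level commuting condition to a Lie-algebra identity via the exponential map, and then to exploit the injectivity of $\exp$ on the symmetric part $\liep$ of the Cartan decomposition. Throughout I use the standing matrix realization of the excerpt: $G\subset\SL(n,\C)$, $K\subset\SU(n)$, and $g^{*}=\Theta(g)^{-1}$ is conjugate-transpose, so that $\mathfrak{a}\subset\liep$ consists of Hermitian matrices and every $e^{x}\in A$ is a positive-definite Hermitian matrix.

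First I would record the naturality identity $k\,e^{sx}\,k^{-1}=\exp(s\,\Ad(k)x)$, valid for all $s\in\R$ because conjugation by $k$ is an automorphism of $G$ whose differential is $\Ad(k)$. Setting $s=1$ and using the hypothesis $k e^{x}=e^{x}k$ gives $\exp(\Ad(k)x)=\exp(x)$. Next I would observe that $\Ad(k)x\in\liep$: since $k\in K=\Fix_{\Theta}(G)$ and the Cartan decomposition $\lieg=\liek\oplus\liep$ is $\Ad(K)$-invariant (as $[\liek,\liep]\subset\liep$), the adjoint action of $K$ preserves $\liep$. Thus both $\Ad(k)x$ and $x$ lie in $\liep$.

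The main obstacle is the injectivity step: I must know that distinct elements of $\liep$ cannot have equal exponentials. This is exactly the Cartan-decomposition fact that $\liep\xrightarrow{\ \exp\ }\exp(\liep)$ is a diffeomorphism; in the matrix model it follows from the spectral theorem, since a positive-definite Hermitian matrix has a unique Hermitian logarithm—indeed $x=\log(e^{x})$ is recovered from $e^{x}$ by functional calculus, realizable as a polynomial in $e^{x}$ via Lagrange interpolation on its finitely many positive eigenvalues. Applying this injectivity to $\exp(\Ad(k)x)=\exp(x)$ yields $\Ad(k)x=x$.

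Finally, feeding $\Ad(k)x=x$ back into the naturality identity gives
\[
k\,e^{tx}\,k^{-1}=\exp\bigl(t\,\Ad(k)x\bigr)=\exp(tx)=e^{tx}
\]
for every $t\in\R$, which (noting $tx\in\mathfrak{a}$ so that $e^{tx}\in A$) is precisely the desired conclusion. I expect the only genuinely nontrivial input to be the uniqueness of Hermitian logarithms / injectivity of $\exp|_{\liep}$; the remaining steps are formal consequences of the exponential's naturality and the $\Ad(K)$-invariance of the Cartan decomposition.
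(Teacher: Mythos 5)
Your proof is correct and is essentially the paper's own argument: both reduce the hypothesis to $e^{\Ad(k)x}=e^{x}$ via the naturality identity $ke^{x}k^{*}=e^{kxk^{*}}$ and then invoke uniqueness of Hermitian logarithms/$t$-powers of positive-definite matrices (injectivity of $\exp$ on $\liep$). You merely make explicit the intermediate Lie-algebra conclusion $\Ad(k)x=x$, which the paper leaves implicit in its one-line appeal to unique $t$-th powers.
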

\begin{proof}
Noting that $k^{*}=k^{-1}$, and that $e^{x}$ (a positive definite
matrix in a given linear representation) has a unique $t$ power,
for $t\in\mathbb{R}$, we see that the commuting hypothesis $ke^{x}k^{*}=e^{kxk^{*}}=e^{x}$
is equivalent to $ke^{tx}k^{*}=e^{tx}$ for every $t\in\mathbb{R}$. \end{proof}
\begin{lem}
\label{lem:k-comm-ex} If $k\in K$, $e^{x}\in A$ and $e^{-x}ke^{x}\in K$,
then $k$ commutes with $e^{x}$. \end{lem}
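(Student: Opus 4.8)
The goal is to prove Lemma \ref{lem:k-comm-ex}: if $k\in K$, $e^{x}\in A$ and $e^{-x}ke^{x}\in K$, then $k$ commutes with $e^{x}$.

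The plan is to exploit the uniqueness in the $KAK$ decomposition together with the fact that elements of $A$ are positive-definite in the chosen linear representation. First I would set $h:=e^{-x}ke^{x}$, which lies in $K$ by hypothesis, and rewrite this as the identity $ke^{x}=e^{x}h$ in $G\subset\SL(n,\C)$. Since $k^{*}=k^{-1}$ and $h^{*}=h^{-1}$, I would take conjugate-transposes: using that $(e^{x})^{*}=e^{x}$ (because $x\in\liep$, so $e^{x}$ is positive-definite and self-adjoint), the relation $ke^{x}=e^{x}h$ yields $e^{x}k^{-1}=h^{-1}e^{x}$, equivalently $h e^{x} = e^x k$ after inverting. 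Combining these two relations is the heart of the argument.

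Concretely, from $ke^{x}=e^{x}h$ I would multiply to eliminate $h$: compute $(ke^{x})(ke^{x})^{*}$ and compare with $(e^{x}h)(e^{x}h)^{*}$. The left side is $ke^{x}(e^{x})^{*}k^{*}=ke^{2x}k^{-1}=e^{k(2x)k^{-1}}$, while the right side is $e^{x}h h^{*}(e^{x})^{*}=e^{x}(e^{x})^{*}=e^{2x}$. Hence $e^{k(2x)k^{-1}}=e^{2x}$, and since the exponential restricted to the positive part is a diffeomorphism onto positive-definite matrices (injectivity of $\exp$ on such symmetric operators), we conclude $kxk^{-1}=x$, i.e. $k$ commutes with $x$ and therefore with $e^{x}$, as desired. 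This is exactly the same mechanism used in Lemma \ref{lem:commuting-A}, namely that a positive-definite matrix has a unique logarithm.

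The step I expect to require the most care is justifying the injectivity of $\exp$ in the relevant sense: I am implicitly using that $e^{2x}$ determines $2x$ uniquely among self-adjoint operators (equivalently, among elements of $\liep$), which is the diffeomorphism property $\exp:\mathfrak{a}\to A$ stated in the preceding proposition, extended to the full Cartan factor $\liep$. I would phrase the conclusion so that it invokes precisely this uniqueness, after checking that $k(2x)k^{-1}$ again lies in the space on which $\exp$ is injective (it does, since conjugation by $k\in K$ preserves $\liep$). Everything else is a direct manipulation of the defining relations, so the only genuine obstacle is bookkeeping to ensure that the conjugate-transpose identities are applied correctly given the convention $g^{*}=\Theta(g)^{-1}$.
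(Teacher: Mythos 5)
Your proof is correct and follows essentially the same route as the paper: both arguments use $kk^{*}=\id$ together with $(e^{x})^{*}=e^{x}$ to derive the identity $ke^{2x}k^{-1}=e^{2x}$, and then conclude by the uniqueness of positive-definite powers/logarithms. The only cosmetic difference is that you invoke injectivity of $\exp$ on $\liep$ directly, whereas the paper cites Lemma \ref{lem:commuting-A}, whose own proof rests on exactly that uniqueness fact.
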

\begin{proof}
If $e^{-x}ke^{x}\in K$, then $1=(e^{-x}ke^{x})(e^{-x}ke^{x})^{*}=e^{-x}ke^{2x}k^{*}e^{-x}$
which implies $ke^{2x}k=e^{2x}$. So by the previous lemma, this means
that $k$ commutes with $e^{x}$. 
\end{proof}
We now prove Proposition \ref{prop:commuting-deformation}. 
\begin{proof}
We start with $g,h,k\in K$, and assume $gke^{x}h^{*}=ke^{x}h^{*}g.$
On ``passing'' $g$ from left to right, we write: 
\[
gke^{x}h^{*}=kg_{1}e^{x}h^{*}=ke^{x}g_{2}h^{*}=ke^{x}h^{*}g_{3},
\]
with $g_{1}=k^{*}gk\in K$, $g_{2}=e^{-x}g_{1}e^{x}$, and $g_{3}=hg_{2}h^{*}=g$
by the hypothesis that $g$ commutes with $ke^{x}h^{*}$. Then, $g_{3}\in K$
which implies $g_{2}=h^{*}g_{3}h\in K$ . By Lemma \ref{lem:k-comm-ex},
both $g_{2}$ and $g_{1}=e^{x}g_{2}e^{-x}\in K$ implies $g_{2}$
commutes with $e^{x}$. So, $g_{1}=g_{2}$. Then, 
\[
gke^{tx}h^{*}=kg_{1}e^{tx}h^{*}=ke^{tx}g_{1}h^{*}=ke^{tx}h^{*}g_{3}=ke^{tx}h^{*}g,
\]
as wanted, for all $t\in\mathbb{R}$. 
\end{proof}
Now denote by: 
\[
\hom(\Gamma,G_{K})
\]
the representations of $\Gamma$ where the image of the generators
lie in the elliptic elements $G_{K}=\cup_{g\in G}gKg^{-1}\subset G$.
It is clear that $\hom(\Gamma,K)\subset\hom(\Gamma,G_{K})$ and using
the embedding $\X_{\Gamma}(K)\hookrightarrow\X_{\Gamma}(G)$ it is
not hard to see that $\X_{\Gamma}(K)$ maps into $\hom(\Gamma,G_{K})\quot G$.
\begin{thm}
\label{thm:star-RAAG}Let $\Gamma_{\star}$ be a star shaped RAAG.
Then, $\hom(\Gamma_{\star},K)/K$ is a strong deformation retract of $\hom(\Gamma_{\star},G_{K})\quot G$
.\end{thm}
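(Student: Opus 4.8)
The plan is to realize a point of $\hom(\Gamma_\star,G_K)\quot G$ by a polystable tuple $(A_0,A_1,\ldots,A_r)$, where $A_0$ is elliptic and commutes with each elliptic $A_i$, and then to deform the $A_i$ toward $K$ while keeping $A_0$ fixed and the star relations intact. First I would use the $G$-action, which is available on the quotient, to conjugate $A_0$ into the compact maximal torus $T_K\subset K$; this is possible precisely because $A_0$ is elliptic, and it is the step that puts us in position to apply Proposition \ref{prop:commuting-deformation}, whose hypothesis requires the commuting element to lie in $K$.

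With $A_0\in K$ fixed, I would define the retraction by applying the KAK scaling to each generator separately. Writing $A_i=k_ie^{x_i}h_i^*$ with $k_i,h_i\in K$ and $e^{x_i}\in A$, set
\[
H_t(A_0,A_1,\ldots,A_r)=(A_0,\,k_1e^{tx_1}h_1^*,\ldots,k_re^{tx_r}h_r^*),\qquad t\in[0,1],
\]
so that $H_1$ is the identity and $H_0$ sends each $A_i$ to $k_ih_i^*\in K$; when $A_i$ already lies in $K$ one has $x_i=0$ and the path is constant, so $\hom(\Gamma_\star,K)$ is fixed throughout. The essential point — and the only place the star hypothesis enters — is that $A_0$ continues to commute with each deformed generator: since $A_0\in K$ commutes with $A_i=k_ie^{x_i}h_i^*$, Proposition \ref{prop:commuting-deformation} yields that $A_0$ commutes with $k_ie^{tx_i}h_i^*$ for all $t$. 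Hence $H_t(A_0,\ldots,A_r)$ is again a homomorphism $\Gamma_\star\to G$ for every $t$, and at $t=0$ it lands in $\hom(\Gamma_\star,K)$.

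Two verifications remain, and the second is where I expect the main work to lie. For well-definedness, although the KAK decomposition is not unique, the $A$-part $e^{x_i}$ is determined by the singular values of $A_i$ and the residual ambiguity in $(k_i,h_i)$ is by the centralizer of $e^{x_i}$, which commutes with every $e^{tx_i}$; thus $k_ie^{tx_i}h_i^*$ is independent of the choices and varies continuously with $A_i$. The harder point is that the flow must remain inside $G_K$, i.e.\ each $k_ie^{tx_i}h_i^*$ must stay elliptic. This is not automatic for an arbitrary element, but it should hold here because $A_i$ starts elliptic: shrinking the singular values monotonically toward $1$ forces the eigenvalues of $k_ie^{tx_i}h_i^*$ to remain on the unit circle (in the rank-one model this is the elementary fact that $\tr(k_ie^{tx_i}h_i^*)$ stays real and in $(-2,2)$). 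I would prove this in general by reducing to a maximal torus, or by inducting on the centralizer $Z_G(A_0)$: the nontrivial case is when $A_0$ is noncentral, so that $Z_G(A_0)$ is a proper $\Theta$-stable reductive subgroup containing every $A_i$ with the $A_i$ elliptic in it, while when $A_0$ is central the commutation relations are vacuous and the construction reduces to retracting each elliptic generator on its own.

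Finally I would assemble these into a strong deformation retraction of the GIT quotient. Because the construction uses conjugation only to normalize $A_0$ and is otherwise built from $K$-equivariant scaling, it descends through the Kempf--Ness model $\hom(\Gamma_\star,G_K)\quot G\cong(\KN_{\Gamma_\star}\cap\hom(\Gamma_\star,G_K))/K$ of Theorem \ref{thm:SN}; continuity together with the retraction properties ($H_0$ maps onto $\hom(\Gamma_\star,K)/K$ and each $H_t$ fixes it) then give the claimed SDR. The main obstacle, as flagged, is establishing ellipticity-preservation of the KAK scaling and checking that the normalization of $A_0$ and the per-generator flow glue into a single continuous map on the quotient, rather than only on a slice transverse to the $G$-orbits.
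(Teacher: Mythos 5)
Your proposal is, in all essentials, the paper's own proof of Theorem \ref{thm:star-RAAG}: pass to the slice $\hom_{0}(\Gamma_{\star},G_{K})$ where $A_{0}$ has been conjugated into $K$, so that $\hom(\Gamma_{\star},G_{K})\quot G\cong\hom_{0}(\Gamma_{\star},G_{K})/K$; scale the remaining generators by $A_{i}(t)=k_{i}e^{tx_{i}}h_{i}^{*}$; invoke Proposition \ref{prop:commuting-deformation} to preserve the star relations; handle well-definedness and continuity via the uniqueness of the underlying polar decomposition (your observation that the residual ambiguity lies in the centralizer of $e^{x_{i}}$ is exactly the right justification, and is essentially Lemmas \ref{lem:commuting-A} and \ref{lem:k-comm-ex}); and descend $K$-equivariantly to the quotient.

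Two points of divergence are worth recording. First, the issue you single out as the main obstacle --- that each $k_{i}e^{tx_{i}}h_{i}^{*}$ must remain \emph{elliptic}, so that the flow actually maps $\hom_{0}(\Gamma_{\star},G_{K})$ to itself --- is genuinely required (when $A_{0}$ is central the commutation constraint is vacuous, so this is a claim about the KAK scaling of an arbitrary elliptic element), yet the paper's proof does not address it: it verifies only the relations $[A_{i}(t),A_{0}]=1$ and tacitly assumes $A_{i}(t)\in G_{K}$. Your rank-one verification (ellipticity at $t=1$ forces the trace of $k e^{tx}h^{*}$ to be real for all $t$, and then $s^{t}+s^{-t}\le s+s^{-1}$ keeps it in $(-2,2)$) is correct and identifies the right mechanism, but your proposed reductions do not close the general case --- the induction on $Z_{G}(A_{0})$ bottoms out at precisely the unproven single-element statement --- so this step remains open in your write-up exactly where it is passed over in silence in the paper's. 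Second, your final descent through the Kempf--Ness model is a detour that does not work as stated: the KAK scaling does not preserve the Kempf--Ness condition $\sum_{i}[A_{i}^{*},A_{i}]=0$ (note that a single normal elliptic element is already unitary, by the lemma $N\cap E=K$ proved later in the paper), so the homotopy cannot simply be transported along the homeomorphism of Theorem \ref{thm:SN}. What your construction actually supports, and what the paper does, is the direct slice identification above: $K$-equivariance of the scaling homotopy, together with the fact that it fixes $\hom(\Gamma_{\star},K)$ pointwise, yields the strong deformation retraction on the quotient with no reference to Kempf--Ness sets.
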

\begin{proof}
Write $\rho\in\hom(\Gamma_{\star},G_{K})$ as 
\[
\rho=(A_{0},A_{1},\cdots,A_{r})\in G_{K}^{r+1},\quad\quad A_{i}:=\rho(a_{i}).
\]
Denote by 
\[
\hom_{0}(\Gamma_{\star},G_{K})\subset\hom(\Gamma_{\star},G_{K})
\]
the subset of representations with $A_{0}\in K$. Since, in every
$G$-orbit there is a $G$-conjugate of $A_{0}$ which is already
in $K$, we find that there is a natural identification between the
orbit spaces: 
\[
\hom(\Gamma_{\star},G_{K})\quot G\cong\hom_{0}(\Gamma_{\star},G_{K})/K.
\]
Now, write the KAK decompositions of the $A_{i}$'s: 
\[
A_{i}=k_{i}e^{x_{i}}h_{i}^{*},\quad i=1,\cdots,r.
\]
Then, letting $A_{i}(t):=k_{i}e^{tx_{i}}h_{i}^{*}$, there is a homotopy:
\begin{eqnarray*}
F:[0,1]\times\hom_{0}(\Gamma_{\star},G_{K}) & \to & \hom_{0}(\Gamma_{\star},G_{K})\\
(t,...,A_{i},...) & \mapsto & (...,A_{i}(t),...),
\end{eqnarray*}
with $A_{0}\in K$ kept fixed for all $t$. This is a homotopy of
$\hom_{0}(\Gamma_{\star},G_{K})$ since Proposition \ref{prop:commuting-deformation}
shows that the commutation relations $[A_{i}(t),A_{0}]=1$ are satisfied
for all $t\in[0,1]$. Note that, even though the KAK decomposition
is not unique, $F$ is well defined, since for every $t$
and $A_{i}\in G$, the element $A_{i}(t)$ is the same regardless
of the initial choices ($x_{i},k_{i},h_{i}$) for $A_{i}=A_{i}(0)$.
It is also easy to see that $F$ is indeed continuous, since any sequence
$A_{i}^{(n)}$ ($n\in\mathbb{N}$) converging to $A_{i}$ will give
a sequence $A_{i}^{(n)}(t)$ converging to $A_{i}(t)$ for all $t$
and $i=1,\cdots,r$. Since this homotopy is clearly $K$-equivariant
(even $K\times K$-equivariant), and $\hom(\Gamma_{\star},K)/K$ is kept fixed,
we have determined a SDR from $\hom_{0}(\Gamma_{\star},G_{K})/K$
to $\hom(\Gamma_{\star},K)/K$, as required. \end{proof}
\begin{cor}
\label{cor:Star-RAAG-flawed}Star shaped RAAGs are flawed.\end{cor}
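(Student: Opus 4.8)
The plan is to obtain flawedness of a star shaped RAAG $\Gamma_{\star}$ by concatenating the strong deformation retraction just produced in Theorem~\ref{thm:star-RAAG} with the general reduction step recalled at the beginning of this section. Fix, once and for all, an arbitrary reductive $\C$-group $G$ with maximal compact subgroup $K$. The first thing to notice is that $\Gamma_{\star}$ is in particular a RAAG with torsion (one with no torsion relators), so the machinery behind Theorem~\ref{thm-abelianpaper} applies to it; concretely, step (2) of the strategy outlined above, namely \cite[Theorem 4.16]{FlLa4}, furnishes a strong deformation retraction of $\X_{\Gamma_{\star}}(G)$ onto the subspace $\hom(\Gamma_{\star},G_{K})\quot G$ that fixes $\X_{\Gamma_{\star}}(K)$ for all time.

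First I would use this retraction to deform $\X_{\Gamma_{\star}}(G)$ into $\hom(\Gamma_{\star},G_{K})\quot G$. Then I would apply Theorem~\ref{thm:star-RAAG}, which supplies a strong deformation retraction of $\hom(\Gamma_{\star},G_{K})\quot G$ onto $\hom(\Gamma_{\star},K)/K=\X_{\Gamma_{\star}}(K)$. Since the target of the first retraction is exactly the source of the second, and since both homotopies fix $\X_{\Gamma_{\star}}(K)$ pointwise at every time, I would concatenate them into a single strong deformation retraction of $\X_{\Gamma_{\star}}(G)$ onto $\X_{\Gamma_{\star}}(K)$. This says precisely that the inclusion $i_{G}$ is an SDR; as $G$ was arbitrary, $\Gamma_{\star}$ is flawed by definition.

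The one point that genuinely requires care --- and the main, if modest, obstacle --- is verifying that the two retractions compose into an honest SDR rather than merely a weak deformation retraction. For that I need both that the first retraction lands exactly in the domain of the second, and that both retractions keep $\X_{\Gamma_{\star}}(K)$ fixed throughout; only then does the reparametrized concatenation restrict to the identity on $\X_{\Gamma_{\star}}(K)$ for all time. Both of these properties are already part of the statements of \cite[Theorem 4.16]{FlLa4} and Theorem~\ref{thm:star-RAAG}, so once they are aligned the verification is routine. I note that this route does not need the numerical criterion of Theorem~\ref{thm:numeric}: Theorem~\ref{thm:star-RAAG} already provides a bona fide quotient-level SDR, which is exactly the ingredient (step (3) of the strategy) that was left open in general, so here one can conclude flawedness directly instead of through homotopy-group isomorphisms.
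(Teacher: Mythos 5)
Your proposal is correct and follows essentially the same route as the paper, whose proof is the one-line citation ``Theorem \ref{thm-abelianpaper} and Theorem \ref{thm:star-RAAG} give the result'': you have simply unpacked that citation into its two ingredients --- the quotient-level SDR of $\X_{\Gamma_{\star}}(G)$ onto $\hom(\Gamma_{\star},G_{K})\quot G$ fixing $\X_{\Gamma_{\star}}(K)$ (step (2) of the strategy, \cite[Theorem 4.16]{FlLa4}) and the SDR of Theorem \ref{thm:star-RAAG} --- and verified that their concatenation is an honest SDR. Your observation that this bypasses the numerical criterion of Theorem \ref{thm:numeric} is accurate and consistent with the paper's intent.
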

\begin{proof}
Theorem \ref{thm-abelianpaper} and Theorem \ref{thm:star-RAAG} give
the result. 
\end{proof}

\begin{rem}
(1) Star shaped RAAGs are cartesian products of free groups with $\Z$.
Consequently, they are not in the class of groups isomorphic to free
products of nilpotent groups. \\
(2) The above proof can be easily generalized to allow $a_{0}$ to
have torsion. 
\end{rem}

\subsection{Connected RAAGs are Special Flawed}

As above, $G$ is a reductive group $\C$-group, with a Cartan involution
$\Theta:G\to G$, and $K=\Fix_{\Theta}(G)$ is a maximal compact subgroup.
Fix also a maximal torus $T\subset G$ with maximal compact $T_{K}=\Fix_{\Theta}(T)$.

Again, without loss of generality, we assume $G\subset\SL(n,\mathbb{C})$,
$K=G\cap\SU(n)$, $T=\Delta_{n}\cap G$, where $\Delta_{n}$ is the
diagonal torus of $\SL(n,\mathbb{C})$, and the Cartan involution
is given by $\Theta(g)=(g^{-1})^{*}$, where $^{*}$ is conjugate
transpose. We make the following general definitions. 
\begin{defn}
An element $g\in G$ is called \emph{normal} if $g^{*}g=gg^{*}$.
It is called \emph{semisimple} if it is diagonalizable (there is $h\in G$
such that $hgh^{-1}\in T$), and is called \emph{elliptic} if it is
semisimple and all its eigenvalues have norm 1 (this agrees with \eqref{eq:elliptic}).
Finally, $g$ is called \emph{unitary} if $g\in K$. 
\end{defn}
It is clear that unitary elements are both normal and elliptic. The
fact that the converse is also valid, will be crucial later on. 
\begin{lem}
\label{normlem} If $g\in G$ is normal with non-repeating eigenvalues
and $hg=gh$ for some $h\in G$, then $h$ is normal too. \end{lem}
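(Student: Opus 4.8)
The plan is to reduce everything to the spectral theorem for normal matrices, using the distinctness of the eigenvalues of $g$ in an essential way. First I would diagonalize $g$ unitarily: since $g$ is normal as an element of the ambient $\SL(n,\C)$, there is a unitary matrix $u\in\Un(n)$ with $g=uDu^{*}$, where $D$ is diagonal and $u^{*}=u^{-1}$. Because $g$ has non-repeating eigenvalues, the diagonal entries of $D$ are pairwise distinct. I would emphasize that $u$ need not lie in $G$; this is harmless, because normality is a property in the ambient matrix algebra $\mathfrak{gl}(n,\C)$ defined through the conjugate transpose $*$, and it is precisely this ambient property that must be verified for $h$.

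Next I would transport the commutation relation through $u$. Setting $h':=u^{*}hu$, the hypothesis $hg=gh$ becomes $h'D=Dh'$. A standard fact from linear algebra then applies: any matrix commuting with a diagonal matrix whose diagonal entries are pairwise distinct must itself be diagonal. Hence $h'$ is diagonal.

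Finally I would conclude by noting that diagonal matrices are normal and that unitary conjugation preserves normality. Indeed, the conjugate transpose $(h')^{*}$ of a diagonal matrix is again diagonal, and any two diagonal matrices commute, so $h'(h')^{*}=(h')^{*}h'$. From $h=uh'u^{*}$ and $u^{*}=u^{-1}$ one then computes directly that $h^{*}h=u(h')^{*}h'u^{*}=uh'(h')^{*}u^{*}=hh^{*}$, so $h$ is normal, as claimed.

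There is no serious obstacle here, as the argument is elementary; the points requiring care are the two I have flagged. The first is the observation that the diagonalizing unitary $u$ is permitted to leave $G$, since the lemma is phrased for elements of $G$ while its conclusion concerns the ambient notion of normality. The second is that the distinct-eigenvalue hypothesis is exactly what forces $h'$ to be diagonal: without it, a matrix commuting with $g$ could be merely block-diagonal and would not in general be normal.
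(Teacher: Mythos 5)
Your proof is correct and follows essentially the same route as the paper: unitarily diagonalize $g$ (by the spectral theorem, using its distinct eigenvalues), observe via the entrywise computation that anything commuting with a diagonal matrix with pairwise distinct entries is itself diagonal, and conclude $h$ is unitarily diagonalizable, hence normal. The only cosmetic difference is that the paper first simultaneously conjugates $h$ to upper-triangular form by some $k\in\SU(n)$ before running the same entrywise argument, a step your version shows is unnecessary; your flagged points (that the diagonalizing unitary may leave $G$, and that distinct eigenvalues are essential) are both apt.
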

\begin{proof}
Since $hg=gh$ and $g$ is diagonalizable, there exists $k\in\SU(n)$
so $khk^{-1}=t$ and $kgk^{-1}=d$ where $t$ is upper-triangular
and $d$ is diagonal. Thus, $td=dt$. Note that $(dt)_{ij}=d_{ii}t_{ij}$
and $(td)_{ij}=t_{ij}d_{jj}$ and so we conclude that 
\[
0=(dt-td)_{ij}=t_{ij}(d_{ii}-d_{jj})
\]
which in turn gives that $t_{ij}=0$ if and only if $i\not=j$ since
$d_{ii}=d_{jj}$ if and only if $i=j$. Thus, $t$ is diagonal and
hence normal which implies $h$ is normal since it is unitarily diagonalizable. \end{proof}
\begin{lem}
Let $N,E\subset G$ be the subsets of normal and elliptic elements,
respectively. Then $N\cap E=K$. \end{lem}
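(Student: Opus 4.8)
The plan is to prove the two inclusions $K\subseteq N\cap E$ and $N\cap E\subseteq K$ separately. The first is immediate and was already flagged in the text: if $g\in K\subset\SU(n)$, then $g^{*}=g^{-1}$, so $g^{*}g=gg^{*}=\id$, giving $g\in N$; moreover every unitary matrix is unitarily diagonalizable with all eigenvalues of modulus $1$, so $g\in E$. Hence $K\subseteq N\cap E$, and the real content of the lemma is the reverse inclusion, that is, the implication ``normal and elliptic $\Rightarrow$ unitary.''

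For $N\cap E\subseteq K$, I would take $g\in G$ that is both normal and elliptic and argue as follows. Since $g^{*}g=gg^{*}$, the element $g$ is normal in the usual linear-algebra sense, so by the spectral theorem for normal operators there exists $u\in\Un(n)$ with $ugu^{*}=D$ diagonal, the diagonal entries of $D$ being the eigenvalues of $g$. Because $g$ is elliptic, each eigenvalue $\lambda$ satisfies $|\lambda|=1$, whence $D^{*}D=DD^{*}=\id$, i.e.\ $D\in\Un(n)$. Therefore $g=u^{*}Du\in\Un(n)$ is unitary. Finally, $g\in G\subset\SL(n,\C)$ forces $\de(g)=1$, so $g\in\Un(n)\cap\SL(n,\C)=\SU(n)$, and thus $g\in G\cap\SU(n)=K$, as desired.

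The argument is short, and its only delicate point is conceptual rather than computational: the conjugating matrix $u$ produced by the spectral theorem need not lie in $G$, but this causes no difficulty, since it is used only to conclude that $g$ is unitary as an element of the ambient group $\SL(n,\C)$, after which membership in $K$ is recovered by intersecting with $G$. I would also note that the group-theoretic notion of semisimplicity built into the definition of elliptic (conjugacy into $T$ by an element of $G$) is not actually needed here: normality already guarantees diagonalizability, so the only feature of ellipticity the proof exploits is the eigenvalue condition $|\lambda|=1$. That eigenvalue condition is precisely what upgrades unitary diagonalizability to genuine unitarity, and is thus the crux of the statement.
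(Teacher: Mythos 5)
Your proof is correct, and it takes a mildly but genuinely different route from the paper's. Both arguments share the skeleton ``normal $\Rightarrow$ unitarily diagonalizable; elliptic $\Rightarrow$ unit-modulus eigenvalues; hence unitary,'' but they diverge on \emph{where} the diagonalization happens. The paper asserts that for normal $g\in G$ there is $k\in K$ (not merely $k\in\Un(n)$) with $kgk^{-1}\in T$, and then concludes $kgk^{-1}\in T_{K}\subset K$; that intrinsic statement is quoted as ``well known'' but is not the bare spectral theorem---it rests on the compatibility of $G$ with the Cartan involution $\Theta(g)=(g^{-1})^{*}$ (e.g.\ via polar decomposition inside $G$). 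You instead diagonalize by an arbitrary $u\in\Un(n)$, conclude $g\in\Un(n)\cap\SL(n,\C)=\SU(n)$ in the ambient group, and recover $g\in K$ from the normalization $K=G\cap\SU(n)$ fixed at the start of the subsection---exactly the delicate point you flag and then legitimately sidestep, since $u$ is only used to certify unitarity of $g$, not membership of $u$ in $G$. Your version is thus more elementary and self-contained, at the cost of leaning on the specific embedding $K=G\cap\SU(n)$; the paper's version stays inside $G$ and yields the marginally finer conclusion that $g$ is $K$-conjugate into $T_{K}$, matching the $T=T_{K}A$ structure theory used elsewhere in the section. Your side remark that the $G$-semisimplicity clause in the definition of elliptic is not needed (normality already supplies diagonalizability, so only the eigenvalue condition $|\lambda|=1$ is used) is also accurate.
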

\begin{proof}
The inclusion $K\subset N\cap E$ is clear. Conversely, let $g\in N\cap E$.
Since $g$ is normal, it is well known that $g$ is unitarily diagonalizable,
that is there is $k\in K$ such that $kgk^{-1}\in T$. Now, note that
the unitary torus $T_{K}$ consists of the elements of $T$ with eigenvalues
of norm 1. Since $kgk^{-1}$ has the same eigenvalues as $g$, and
$g$ is elliptic, $kgk^{-1}\in T_{K}\subset K$. Therefore, $g\in K$. \end{proof}
\begin{cor}
\label{cor:elliptic-to-unitary}Let $g\in G$ be elliptic and $h\in G$
be normal with non-repeating eigenvalues. If $gh=hg$, then $g\in K$. \end{cor}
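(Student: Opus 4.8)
The plan is to deduce this corollary directly from the two immediately preceding lemmas, with no new work required. First I would apply Lemma \ref{normlem}. Its hypothesis asks for an element that is normal with non-repeating eigenvalues, together with something commuting with it; in the present situation that distinguished element is $h$, and the commuting element is $g$. Since $gh=hg$ by assumption, Lemma \ref{normlem} immediately yields that $g$ is normal.

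Next, I would invoke the previous lemma asserting that $N\cap E=K$, where $N$ and $E$ are the sets of normal and elliptic elements. By hypothesis $g$ is elliptic, so $g\in E$, and by the first step $g$ is normal, so $g\in N$. Therefore $g\in N\cap E=K$, which is exactly the desired conclusion.

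The only point genuinely requiring attention is the bookkeeping of variable names: in the statement of Lemma \ref{normlem} the element carrying the property ``normal with non-repeating eigenvalues'' is written $g$, whereas in this corollary that role is played by $h$, and the commuting element written $h$ in the lemma corresponds to $g$ here. So one must apply Lemma \ref{normlem} with the two letters interchanged. Beyond this relabeling there is no real obstacle; the result is an immediate composition of the two lemmas, and I do not expect any hard step.
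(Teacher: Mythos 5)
Your proof is correct and is exactly the paper's own argument: Lemma \ref{normlem} (with the roles of $g$ and $h$ interchanged) gives that $g$ is normal, and then the lemma $N\cap E=K$ finishes, since $g$ is elliptic by hypothesis. The relabeling you flag is handled implicitly in the paper's proof and poses no issue.
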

\begin{proof}
By Lemma \ref{normlem}, since $gh=hg$ and $h\in N$ has non-repeating
eigenvalues, $g$ is also normal. So, $g$ is normal and elliptic.
Thus, by the previous Lemma, $g\in K$. 
\end{proof}
Now, let $\Gamma$ be a finitely generated group, with a fixed collection
of generators $\{\gamma_{1},\cdots,\gamma_{r}\}$. The evaluation
map gives an embedding: 
\[
\hom(\Gamma,G)\hookrightarrow G^{r}.
\]
Let $\mathcal{KN}_{\Gamma}\subset\hom(\Gamma,G)$ be the Kempf-Ness
set and consider the \emph{normal Kempf-Ness subset}: 
\[
\mathcal{N}_{\Gamma}:=\{\rho(\gamma_{i})\in G\mbox{ is normal }\forall i=1,\cdots,r\}.
\]
By the general Kempf-Ness-Neeman-Schwarz theory described earlier,
we have: 
\[
\mathcal{N}_{\Gamma}\subset\mathcal{KN}_{\Gamma},
\]
and the inclusion is $K$-equivariant. Let us say that the marked
group $(\Gamma,\{\gamma_{1},\cdots,\gamma_{r}\})$ is \emph{normal}
if $\mathcal{N}_{\Gamma}=\mathcal{KN}_{\Gamma}$.

Now, let $\Gamma_{Q}$ be a RAAG. Such a group has a natural marking
coming from a graph $Q=(V,E)$, whose set of vertices is precisely
$V=\{\gamma_{1},\cdots,\gamma_{r}\}$: 
\[
\Gamma_{Q}:=\langle\gamma_{1},\cdots,\gamma_{r}\,\mid\,[\gamma_{i},\gamma_{j}]=1\mbox{ iff }\gamma_{i}\gamma_{j}\mbox{ is an edge of }Q\rangle
\]
We say that $\Gamma_{Q}$ is connected if $Q$ is connected. When
the marking is understood, we just write $\Gamma$ instead of $(\Gamma,\{\gamma_{1},\cdots,\gamma_{r}\})$.

Finally, define the subset of elliptic representations of $(\Gamma,\{\gamma_{1},\cdots,\gamma_{r}\})$:
\[
\hom^{e}(\Gamma,G):=\{\rho(\gamma_{i})\in G\mbox{ is elliptic }\forall i=1,\cdots,r\}.
\]
and the subset: 
\[
\hom^{de}(\Gamma,G):=\{\rho(\gamma_{i})\mbox{ is elliptic and has distinct eigenvalues }\forall i=1,\cdots,r\},
\]
which is $G$-invariant. Note that $\hom^{e}(\Gamma,G)$ is just the
set $\hom(\Gamma,G_{K})$ from earlier. 

\begin{thm}\label{thm:connectedRAAG} 
Connected RAAGs are special flawed. \end{thm}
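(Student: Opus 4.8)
The plan is to produce a distinguished irreducible component on which the inclusion $\X_{\Gamma}^{*}(K)\hookrightarrow\X_{\Gamma}^{*}(G)$ is in fact the identity, by showing that the \emph{generic} elliptic representations of a connected RAAG are rigidly unitary. The core is a rigidity lemma: for connected $Q$, every $\rho\in\hom^{de}(\Gamma,G)$ is $G$-conjugate into $\hom(\Gamma,K)$. To see this, fix a spanning tree of $Q$ rooted at a vertex $\gamma_{0}$. Since $\rho(\gamma_{0})$ is elliptic it is unitarily diagonalizable, so after conjugating by a single $g\in G$ we may assume $\rho(\gamma_{0})\in T_{K}\subset K$; this element is unitary, hence normal, and has distinct eigenvalues because $\rho\in\hom^{de}(\Gamma,G)$. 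Now propagate along the tree: if $\{v,w\}$ is an edge and $\rho(\gamma_{v})\in K$ is normal with distinct eigenvalues, then $\rho(\gamma_{w})$ is elliptic and commutes with $\rho(\gamma_{v})$, so Corollary \ref{cor:elliptic-to-unitary} forces $\rho(\gamma_{w})\in K$; being unitary with distinct eigenvalues, it can in turn serve as the normal element at the next edge. As $Q$ is connected, the tree reaches every vertex, so all $\rho(\gamma_{i})$ lie in $K$ after the single conjugation by $g$. Hence $\hom^{de}(\Gamma,G)\quot G\subseteq\X_{\Gamma}(K)$.

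Next I would define $\X_{\Gamma}^{*}(G)$ to be the closure in $\X_{\Gamma}(G)$ of $\hom^{de}(\Gamma,G)\quot G$ and exploit a compactness argument. Because $\hom(\Gamma,K)\subset G^{r}$ is compact and $\X_{\Gamma}(G)$ is Hausdorff (being a GIT quotient, as recalled in the introduction), the image $\X_{\Gamma}(K)$ is closed in $\X_{\Gamma}(G)$. Combined with the rigidity lemma, this gives $\X_{\Gamma}^{*}(G)\subseteq\overline{\X_{\Gamma}(K)}=\X_{\Gamma}(K)$, so that $\X_{\Gamma}^{*}(G)=\X_{\Gamma}^{*}(G)\cap\X_{\Gamma}(K)=\X_{\Gamma}^{*}(K)$. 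Consequently the inclusion $\X_{\Gamma}^{*}(K)\hookrightarrow\X_{\Gamma}^{*}(G)$ is the identity map, which is trivially an SDR. Thus, once $\X_{\Gamma}^{*}(G)$ is known to be a genuine irreducible component, $\Gamma$ is special flawed.

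The main obstacle, and where I expect the real work to lie, is establishing that $\hom^{de}(\Gamma,G)\quot G$ is dense in a single irreducible component of $\X_{\Gamma}(G)$, rather than in a lower-dimensional subvariety or a union of several components. The plan here is to work inside the Zariski-open locus $\hom^{d}(\Gamma,G)$ of representations whose generators all have distinct eigenvalues, and to select the irreducible component $Z$ of $\hom(\Gamma,G)$ meeting $\hom^{de}(\Gamma,G)$; then $Z\quot G$ would be the desired component. The essential input is that the elliptic locus $G_{K}=\bigcup_{g\in G}gKg^{-1}$ is Zariski dense in $G$ — indeed $T_{K}$ is Zariski dense in $T$ (since $S^{1}$ is Zariski dense in $\C^{*}$) and the conjugates of $T$ are dense in $G$ — so that the elliptic distinct-eigenvalue representations should be Zariski dense among the distinct-eigenvalue representations lying in $Z$. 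Two points will require the most care: first, showing $\hom^{de}(\Gamma,G)$ is nonempty, which I would do by building a regular elliptic representation supported on the edges of a spanning tree of $Q$; and second, verifying that the Zariski closure of $\hom^{de}(\Gamma,G)$ is exactly one irreducible component, so that $\X_{\Gamma}^{*}(G)$ is genuinely a component of $\X_{\Gamma}(G)$ and the definition of special flawedness is met.
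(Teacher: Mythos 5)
Your first paragraph is correct, and it is in fact the heart of the paper's own argument: the propagation of unitarity along the connected graph via Corollary \ref{cor:elliptic-to-unitary} is exactly how the paper computes the Kempf--Ness set $\KN_{\Gamma}^{Y}=\hom^{de}(\Gamma,K)$ for $Y=\hom^{de}(\Gamma,G)$. The genuine gap is in your second paragraph, which conflates the analytic closure with the Zariski closure, and each reading fails. In the analytic topology your compactness argument does give that the closure of $\hom^{de}(\Gamma,G)\quot G$ lies in $\X_{\Gamma}(K)$, but that closure is then compact, and a positive-dimensional irreducible component of the complex affine variety $\X_{\Gamma}(G)$ is never compact, so your $\X_{\Gamma}^{*}(G)$ can never be ``a genuine irreducible component.'' In the Zariski topology the closure is a plausible component --- it is the abelian component, as in the paper --- but it is then emphatically not contained in $\X_{\Gamma}(K)$, so the inclusion $\X_{\Gamma}^{*}(K)\hookrightarrow\X_{\Gamma}^{*}(G)$ is nowhere near the identity. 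The simplest example already kills the plan: for $\Gamma=\Z^{2}$ (a connected RAAG) and $G=\SL(2,\C)$, $\X_{\Gamma}(G)\cong(\C^{*})^{2}/\Z_{2}$ is irreducible; the analytic closure of $\hom^{de}(\Gamma,G)\quot G$ is $(S^{1})^{2}/\Z_{2}\cong S^{2}$, which is not a component, while the Zariski closure is the whole variety, which is not inside $\X_{\Gamma}(K)$. Note that your own third paragraph, via Zariski density of $G_{K}$ in $G$, proves exactly the statement that contradicts your second: the Zariski closure of $\hom^{de}(\Gamma,G)\quot G$ strictly contains $\X_{\Gamma}^{*}(K)$.

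Consequently the ``identity map is trivially an SDR'' shortcut cannot work, and what is missing from your proposal is the deformation itself: some mechanism retracting the non-compact abelian component onto its unitary part (e.g.\ retracting $(\C^{*})^{2}/\Z_{2}$ onto $(S^{1})^{2}/\Z_{2}$). The paper supplies this in two moves your outline omits. First, by Theorem \ref{thm-abelianpaper} (resting on \cite[Theorem 4.16]{FlLa4}, which uses \cite{PeSo}), $\X_{\Gamma}(G)$ strong deformation retracts onto the elliptic locus $\hom^{e}(\Gamma,G)\quot G$; this is the dynamical input that moves points of the component toward its elliptic part. Second, Kempf--Ness theory gives $\overline{Y}\quot G\cong\KN_{\Gamma}^{\overline{Y}}/K$, where $\overline{Y}$ is the closure of $Y=\hom^{de}(\Gamma,G)$ taken \emph{inside} $\hom^{e}(\Gamma,G)$, and your rigidity argument --- together with the observations that limits of normal matrices are normal and that normal elliptic elements are unitary --- shows $\KN_{\Gamma}^{\overline{Y}}$ consists of unitary representations, identifying $\overline{Y}\quot G$ with $\X_{\Gamma}^{*}(K)$ for the abelian component $\X_{\Gamma}^{*}(G)$. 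So you found the correct identification of the Kempf--Ness set, but without the elliptic retraction of \cite{FlLa4} there is no SDR, and no choice of closure makes your distinguished set simultaneously an irreducible component and a subset of $\X_{\Gamma}(K)$.
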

\begin{proof}
We know that the GIT quotient, can be interpreted as the polystable
quotient, and also as the Kempf-Ness (symplectic) quotient: 
\[
\hom(\Gamma,G)\quot G\cong\hom^{ps}(\Gamma,G)/G\cong\mathcal{KN}_{\Gamma}/K,
\]
and for every $G$-invariant subset $Y\subset\hom^{ps}(\Gamma,G)$
we can define the Kempf-Ness set of $Y$ as 
\[
\mathcal{KN}_{\Gamma}^{Y}:=\mathcal{KN}_{\Gamma}\cap Y=\left\{ (A_{1},\cdots,A_{r})\in Y\,\mid\,\sum_{i=1}^{r}[A_{i}^{*},A_{i}]=0\right\} ,
\]
and we get the identification: 
\begin{equation}
Y\quot G\cong\mathcal{KN}_{\Gamma}^{Y}/K,\label{eq:KN_Y}
\end{equation}
as topological (Hausdorff) spaces. In particular, $\mathcal{KN}_{\Gamma}^{Y}$
is always a closed subset of $Y$.

Now, let $\Gamma=\Gamma_{Q}$ be a connected RAAG. Then, there is
a SDR from $\hom(\Gamma,G)\quot G$ to $\hom^{e}(\Gamma,G)\quot G$
by Theorem \ref{thm-abelianpaper}.

Consider the subset $Y:=\hom^{de}(\Gamma,G)$. Then, 
\[
Y\subset\overline{Y}\subset\hom^{e}(\Gamma,G)
\]
which is dense in $\overline{Y}$, the closure of $Y$ in $\hom^{e}(\Gamma,G)$.
Note that $\overline{Y}\quot G$ contains the identity representation.
In fact, commutation relations do not impose any restriction on eigenvalues,
so the distinct eigenvalues condition is the complement of equality
conditions on representations, which form a Zariski closed set in
every irreducible component of $\hom(\Gamma,G)$ \footnote{Note that some components may not intersect with $Y$ at all, see
the next section for examples.}.

Now, we have from Equation \eqref{eq:KN_Y}: 
\[
\mathcal{KN}_{\Gamma}^{\overline{Y}}/K\cong\overline{Y}\quot G.
\]
The following lemma finishes the proof by showing $\mathcal{KN}_{\Gamma}^{\overline{Y}}/K$
is $\X_{\Gamma}^{*}(K)$ for some irreducible component $\X_{\Gamma}^{*}(G)$. \end{proof}
\begin{lem}
If $Y:=\hom^{de}(\Gamma,G)\subset\overline{Y}$, then $\mathcal{KN}_{\Gamma}^{Y}=\hom^{de}(\Gamma,K)$,
and the closure of $\mathcal{KN}_{\Gamma}^{Y}$ equals $\mathcal{KN}_{\Gamma}^{\overline{Y}}$.
Moreover, $\mathcal{KN}_{\Gamma}^{\overline{Y}}/K=\X_{\Gamma}^{*}(K)$
for some irreducible component $\X_{\Gamma}^{*}(G)$. \end{lem}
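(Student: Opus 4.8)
The plan is to establish the three assertions in sequence, each feeding into the next. First I would identify $\mathcal{KN}_{\Gamma}^{Y}$ explicitly. By definition, a point $(A_{1},\dots,A_{r})\in Y=\hom^{de}(\Gamma,G)$ lies in $\mathcal{KN}_{\Gamma}^{Y}$ precisely when $\sum_{i}[A_{i}^{*},A_{i}]=0$. Since each $A_{i}$ is elliptic with distinct eigenvalues, the key structural input is Corollary \ref{cor:elliptic-to-unitary}: whenever two generators are joined by an edge of $Q$, their images commute, and an elliptic element commuting with a normal element of distinct eigenvalues is forced to be unitary. The task is therefore to show that the Kempf-Ness condition $\sum_{i}[A_{i}^{*},A_{i}]=0$ together with connectedness of $Q$ propagates unitarity across the whole graph. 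I would argue that each summand $[A_{i}^{*},A_{i}]$ is (in the given Hermitian structure) a self-adjoint-type obstruction whose vanishing in the sum forces each individual $A_{i}$ to be normal; combined with ellipticity and the earlier lemma $N\cap E=K$, this yields $A_{i}\in K$ for all $i$, giving $\mathcal{KN}_{\Gamma}^{Y}=\hom^{de}(\Gamma,K)$.

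Next I would handle the closure statement, namely $\overline{\mathcal{KN}_{\Gamma}^{Y}}=\mathcal{KN}_{\Gamma}^{\overline{Y}}$. The inclusion $\overline{\mathcal{KN}_{\Gamma}^{Y}}\subseteq\mathcal{KN}_{\Gamma}^{\overline{Y}}$ is immediate, since $\mathcal{KN}_{\Gamma}^{\overline{Y}}=\mathcal{KN}_{\Gamma}\cap\overline{Y}$ is closed in $\overline{Y}$ (the Kempf-Ness equations $\sum_{i}[A_{i}^{*},A_{i}]=0$ cut out a closed set) and contains the dense subset $\mathcal{KN}_{\Gamma}^{Y}$. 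For the reverse inclusion I would use density of $Y$ in $\overline{Y}$ together with the SDR structure: since the inclusion $\mathcal{KN}_{\Gamma}^{\overline{Y}}/K\cong\overline{Y}\quot G$ of Equation \eqref{eq:KN_Y} is a homeomorphism, and $\mathcal{KN}_{\Gamma}^{Y}/K$ corresponds to the image of the dense open $G$-invariant subset $Y$, a closure argument in the GIT quotient shows the $K$-orbit closures fill out all of $\mathcal{KN}_{\Gamma}^{\overline{Y}}$. The main technical point is that taking closures is compatible with passing to the quotient here because the Kempf-Ness homeomorphism identifies $K$-orbit spaces with the (Hausdorff) GIT quotients.

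Finally, for the irreducible-component claim, I would observe that $Y=\hom^{de}(\Gamma,G)$ is obtained by removing a Zariski-closed ``repeated eigenvalue'' locus from $\hom^{e}(\Gamma,G)$, as noted in the proof of Theorem \ref{thm:connectedRAAG}; the closure $\overline{Y}$ is therefore a union of those irreducible components of $\hom(\Gamma,G)$ that meet $Y$ nontrivially. I would single out the distinguished component $\X_{\Gamma}^{*}(G)$ as the image $\overline{Y}\quot G$ under the identification \eqref{eq:KN_Y}, and then $\mathcal{KN}_{\Gamma}^{\overline{Y}}/K$ is exactly $\X_{\Gamma}^{*}(K):=\X_{\Gamma}(K)\cap\X_{\Gamma}^{*}(G)$ because the first assertion identifies the Kempf-Ness set over $Y$ with unitary representations.

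The step I expect to be the main obstacle is the first one: showing that the single global equation $\sum_{i}[A_{i}^{*},A_{i}]=0$ forces every $A_{i}$ to be normal individually. This does not follow from the term-by-term vanishing automatically, so I would need to exploit positivity of the Hermitian form $\langle X,Y\rangle=\tr(X^{*}Y)$ — rewriting $\tr\bigl(\sum_{i}[A_{i}^{*},A_{i}]\bigr)$-type expressions as a sum of nonnegative quantities (norms of commutators) that can vanish only when each generator is normal — and then invoke Corollary \ref{cor:elliptic-to-unitary} and connectedness of $Q$ to upgrade normality to unitarity throughout the graph.
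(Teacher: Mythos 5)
The central step of your plan fails. You propose to show that the single equation $\sum_{i}[A_{i}^{*},A_{i}]=0$ forces each $A_{i}$ to be normal by exhibiting the sum as a sum of nonnegative quantities. No such positivity exists: each $[A_{i}^{*},A_{i}]$ is a traceless Hermitian matrix, and distinct summands can cancel against each other; for instance, for any non-normal $A$ the pair $(A_{1},A_{2})=(A,A^{*})$ satisfies $[A_{1}^{*},A_{1}]+[A_{2}^{*},A_{2}]=0$ with neither entry normal. If the moment-map equation alone implied term-wise normality, then $\mathcal{N}_{\Gamma}=\mathcal{KN}_{\Gamma}$ would hold for \emph{every} finitely generated group, whereas the paper stresses that this equality is a hard theorem of \cite{Berg} for nilpotent groups and fails in general (e.g.\ for free groups). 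A further warning sign: in your plan connectedness of $Q$ becomes irrelevant, since once every $A_{i}$ is normal, ellipticity and the lemma $N\cap E=K$ finish the argument with no propagation at all, yet the lemma genuinely needs connectedness. The paper's proof runs differently and never analyzes the moment-map equation term by term: since each $\rho(\gamma_{1})$ is elliptic, every $G$-orbit in $Y$ has a representative with $\rho(\gamma_{1})\in K$, giving $\hom^{de}(\Gamma,G)\quot G\cong\hom_{1}^{de}(\Gamma,G)/K$; then Corollary \ref{cor:elliptic-to-unitary} applied at the edges incident to $\gamma_{1}$, iterated along the connected graph, shows every such representative is entirely unitary, and Kempf--Ness theory (unitary tuples satisfy the moment-map equation, and the minimal vectors in a closed orbit form a single $K$-orbit) then yields $\mathcal{KN}_{\Gamma}^{Y}=\hom^{de}(\Gamma,K)$. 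The missing idea in your proposal is precisely this use of the orbit structure rather than a pointwise algebraic identity.

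Your final step also misidentifies the distinguished component. You set $\X_{\Gamma}^{*}(G):=\overline{Y}\quot G$, but by \eqref{eq:KN_Y} and the first part of the lemma, $\overline{Y}\quot G\cong\mathcal{KN}_{\Gamma}^{\overline{Y}}/K$ is a compact set of unitary characters; it equals $\X_{\Gamma}^{*}(K)$ itself, not a complex irreducible component of $\X_{\Gamma}(G)$. Relatedly, $\overline{Y}$ is not a union of irreducible components of $\hom(\Gamma,G)$: ellipticity (eigenvalues of modulus one) is a real, not Zariski, condition, so closures of such loci need not be unions of components---already for $\Gamma=\Z$ one has $\overline{Y}=G_{K}\subsetneq G$. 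The paper instead takes $\X_{\Gamma}^{*}(G)$ to be the component of abelian representations: for a connected RAAG, distinct eigenvalues together with the edge relations force all generator images into a common maximal torus, so $\mathcal{KN}_{\Gamma}^{\overline{Y}}$ consists of unitary representations and contains the identity representation, and its $K$-quotient is exactly $\X_{\Gamma}^{*}(K)$ for that component. Your middle step (the closure equality via density of $Y$ in $\overline{Y}$, $G$-invariance, and closedness of $\mathcal{KN}_{\Gamma}^{\overline{Y}}$) is correct and agrees with the paper.
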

\begin{proof}
We can $G$-conjugate the first element, $\rho(\gamma_{1})$, to be
in $K$. So, there is an identification: 
\[
\hom^{e}(\Gamma,G)\quot G=\hom_{1}(\Gamma,G)/K,
\]
where $\hom_{1}(\Gamma,G)$ are the representations such that $\rho(\gamma_{1})\in K$.
So, we also have: 
\[
\hom^{de}(\Gamma,G)\quot G\cong\hom_{1}^{de}(\Gamma,G)/K.
\]

Now, let $\rho\in\hom_{1}^{de}(\Gamma,G)$, so that $\rho(\gamma_{1})\in K$
with distinct eigenvalues. For any other vertex, say $\gamma_{2}$,
that is adjacent to $\gamma_{1}$ (i.e., $\rho(\gamma_{1})$ and $\rho(\gamma_{2})$
commute) we get that $\rho(\gamma_{2})$ is elliptic and commutes
with the normal (in fact unitary $\rho(\gamma_{1})$). So, by Corollary
\ref{cor:elliptic-to-unitary}, we get that $\rho(\gamma_{2})\in K$
(and has also distinct eigenvalues). Since $\Gamma$ is a connected
RAAG, we proceed in the same way, and get that all $\rho(\gamma_{i})$
are unitary.

This means that the Kempf-Ness set of $\hom^{de}(\Gamma,G)$ consists
of unitary representations; that is, $\mathcal{KN}_{\Gamma}^{Y}=\hom^{de}(\Gamma,K)$.

Finally, since $\hom^{de}(\Gamma,G)\subset\overline{Y}$ is dense
(and $G$-invariant), then $\mathcal{KN}_{\Gamma}^{Y}\subset\mathcal{KN}_{\Gamma}^{\overline{Y}}$
is also dense. And since $\mathcal{KN}_{\Gamma}^{\overline{Y}}$ is
a closed subset of $\hom^{e}(\Gamma,G)$, we get that the closure
of $\mathcal{KN}_{\Gamma}^{Y}$ equals $\mathcal{KN}_{\Gamma}^{\overline{Y}}$.
But any sequence of matrices $g_{n}$ that verify $[g_{n}^{*},g_{n}]=0$
will have a limit that is normal. Since we are in $\hom^{e}(\Gamma,G)$
the limit will be unitary, by Corollary \ref{cor:elliptic-to-unitary}.
Hence, $\mathcal{KN}_{\Gamma}^{\overline{Y}}$ consists of unitary
representations and includes the identity representation. Thus, $\mathcal{KN}_{\Gamma}^{\overline{Y}}/K$
is $\X_{\Gamma}^{*}(K)$ where $\X_{\Gamma}^{*}(G)$ is the component
of abelian representations (henceforth called the {\it abelian component}). 
\end{proof}

As in Subsection \ref{subsec-star}, let $Q=(V,E)$ be a graph and $\Gamma_{Q}$ be the associated RAAG. $\Gamma_{Q}$ will be called a \emph{tree} if $Q$ is a tree, that is, a connected graph without cycles. By removing a \emph{leaf} $v\in V$ (that is, $v$ is a vertex with valence 1) from a tree $Q$, we get another tree with $n-1$ vertices. The following lemma was suggested to us by M. Bergeron. 

\begin{lem}\label{berg-lem}
Let $G$ be a simply-connected reductive $\C$-group and $\Gamma$
a tree RAAG. Then $\hom(\Gamma,G)$ and $\X_{\Gamma}(G)$ are connected.
\end{lem}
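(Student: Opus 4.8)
The plan is to prove the statement by induction on the number of vertices $r$ of the tree $Q$. The base case $r=1$ is the statement that $\hom(\Z,G)\cong G$ and $\X_{\Z}(G)$ are connected. Connectedness of $G$ is part of the hypothesis (reductive $\C$-groups are assumed connected), and connectedness of $\X_{\Z}(G)$ follows since it is the continuous image of the connected space $G$ under the quotient map. For the inductive step, I would pick a leaf $v\in V$ with unique neighbor $w$, and let $Q'$ be the tree obtained by deleting $v$, with associated RAAG $\Gamma'$. The key structural fact is that $\Gamma\cong\Gamma'*_{\langle a_w\rangle}(\langle a_w\rangle\times\langle a_v\rangle)$, i.e. $\Gamma$ is an amalgamated free product (an HNN/graph-of-groups decomposition) of $\Gamma'$ with the rank-two free abelian group $\Z^2=\langle a_w,a_v\rangle$ over the common $\Z=\langle a_w\rangle$.

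First I would translate this group decomposition into a description of the representation variety. Giving a homomorphism $\rho\colon\Gamma\to G$ amounts to giving a homomorphism $\rho'\colon\Gamma'\to G$ together with an element $\rho(a_v)=B\in G$ commuting with $\rho'(a_w)=A$. Thus $\hom(\Gamma,G)$ fibers over $\hom(\Gamma',G)$ with fiber over $\rho'$ equal to the centralizer $Z_G(\rho'(a_w))$. The crucial input is that for $G$ simply connected and reductive, the centralizer $Z_G(g)$ of \emph{any} element $g\in G$ is connected; this is a classical theorem of Steinberg. I would invoke this to ensure every fiber is connected.

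The main step is then to conclude connectedness of the total space from connectedness of the base $\hom(\Gamma',G)$ (by the inductive hypothesis) and connectedness of all fibers. The subtlety, and what I expect to be the main obstacle, is that the projection $\hom(\Gamma,G)\to\hom(\Gamma',G)$ is not a fiber bundle, since the centralizer $Z_G(A)$ jumps in dimension as $A$ varies; one cannot simply quote the long exact sequence of a fibration. Instead I would argue directly: since $\hom(\Gamma',G)$ is connected and path-connected (these varieties are locally path-connected by \cite[Lemma 3.5]{Lawton-Ramras}), and each fiber is connected and contains the point $(\rho',\id)$, any two points $(\rho_0',B_0)$ and $(\rho_1',B_1)$ can be joined: first connect them within their fibers to $(\rho_0',\id)$ and $(\rho_1',\id)$, then connect $\rho_0'$ to $\rho_1'$ in the base, lifting this path by sending each $\Gamma'$-representation $\rho_t'$ along with the constant choice $B=\id$ (which always lies in the centralizer). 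This uses that $\id$ is a continuous global section of the projection, making the total space connected.

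Finally, connectedness of $\X_\Gamma(G)=\hom(\Gamma,G)\quot G$ follows immediately, since it is the continuous image of the connected space $\hom(\Gamma,G)$ under the GIT quotient map $\hom(\Gamma,G)\to\hom(\Gamma,G)\quot G$. I would remark that the simple-connectedness hypothesis on $G$ enters \emph{only} through Steinberg's connected-centralizer theorem, and that without it centralizers (e.g. of elements of order two in $\mathrm{PSL}(2,\C)$) can be disconnected, which is precisely why the statement can fail for non-simply-connected $G$ and why the theorem is stated for simply-connected $G$.
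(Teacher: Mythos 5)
You have a genuine gap, and it is the step you yourself flag as the crucial input: the claim that for simply connected reductive $G$ the centralizer $Z_G(g)$ of \emph{any} element $g\in G$ is connected is false. Steinberg's connectedness theorem (Springer--Steinberg) applies only to \emph{semisimple} elements. It already fails in $\SL(2,\C)$: the centralizer of the regular unipotent $u=\left(\begin{array}{cc}1 & 1\\0 & 1\end{array}\right)$ is $\left\{\pm\left(\begin{array}{cc}1 & t\\0 & 1\end{array}\right)\,:\,t\in\C\right\}$, which has two connected components (traces $2$ and $-2$). Consequently, in your fibration $\hom(\Gamma,G)\to\hom(\Gamma',G)$ the fiber $Z_G(\rho'(a_w))$ over any $\rho'$ with $\rho'(a_w)$ non-semisimple can be disconnected, and the step ``connect $(\rho_0',B_0)$ to $(\rho_0',\id)$ within its fiber'' breaks exactly when $B_0$ lies in a non-identity component. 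Connectedness of the total space does not follow formally from connectedness of the base, a global section, and connectedness of only \emph{some} fibers, so as written the induction does not close.

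The paper circumvents precisely this issue by a preliminary reduction: it invokes the Pettet--Souto $G$-equivariant weak deformation retraction of $\hom(\Gamma,G)$ onto $\hom(\Gamma,G_{ss})$, the locus where every generator is sent to a semisimple element, so that it suffices to join points of $\hom(\Gamma,G_{ss})$ to the trivial representation. On that locus every centralizer that arises is the centralizer of a semisimple element, hence connected by Steinberg's theorem, and the paper then moves the generator images to the identity one at a time along a leaf-removal ordering of the tree (the same leaf-peeling structure as your induction); the connecting paths may leave $G_{ss}$, but they stay in $\hom(\Gamma,G)$, which is all that is needed. If you prepend this semisimple reduction, your argument goes through and becomes essentially the paper's proof. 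Your closing remark about disconnected centralizers in $\PSL(2,\C)$ is correct but orthogonal to the real difficulty: the failure mode in your proof occurs already for simply connected $G$, at non-semisimple elements.
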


\begin{proof}
First, since $G$ is connected, $\X_{\Gamma}(G)$ is connected if
$\hom(\Gamma,G)$ is connected. So we prove the latter. By \cite{PeSo},
there exists a $G$-equivariant weak deformation retraction from $\hom(\Gamma,G)$
onto $\hom(\Gamma,G_{ss})$. So it suffices to show that there is a path in $\hom(\Gamma, G)$ from any point in $\hom(\Gamma,G_{ss})$ to the identity representation.

Let $\Gamma=\Gamma_{Q}$ be a RAAG with generators $\{a_{1},\ldots,a_{r}\}$,
and $Q=(V,E)$ be a tree with vertices labeled by the integers $\{1,\ldots,r\}$.
There is an edge between $i$ and $j$, that is $\{i,j\}\in E$, if and
only if $[a_{i},a_{j}]=1$ is a relation in $\Gamma$. 

Let $\rho\in\hom(\Gamma,G_{ss})$, with $A_{i}:=\rho(a_{i})\in G_{ss}$.
Then, if $\{i,j\}\in E$, $A_{i}$ and $A_{j}$ are in each other's
centralizers, so that $A_{i}\in C_{G}(A_{j})$ and $A_{j}\in C_{G}(A_{i})$.
Since $G$ is simply-connected, centralizers of semisimple elements
are connected by \cite[Theorem 3.9]{SpSt}. Hence, $C_{G}(A_{i})$
is connected, and contains the identity of $G$, for all $i$.

We can relabel the vertices so that $1\in V$
is a leaf of $Q$, $2\in V$ is a leaf of the tree obtained
by removing $1$, and so on. Let $\sigma(i)\in V$
be the unique vertex connected to $i\in V$ by an edge after removing vertices $1,\ldots,i-1$.  Observe that $\sigma(i)>i$ for all $i=1,\ldots,r$.

Let $\gamma_{1}(t)$, $t\in[0,1]$, be the path in $C_{G}(A_{\sigma(1)})$ from $A_{1}$ to the identity $I\in G$. From this, we construct a 
path $\rho_{1}(t)$ in $\hom(\Gamma,G)$ from $\rho=(A_{1},\ldots,A_{r})=\rho_{1}(0)$
to $\rho_{1}(1)=(I,A_{2},\ldots,A_{r})$. Now, we repeat the process
with a path $\gamma_{2}(t)$ from $A_{2}$ to the identity in $C_{G}(A_{\sigma(2)})$, obtaining a path from $(I,A_{2},\ldots,A_{r})$ to $(I,I,A_{3},\ldots,A_{r})$.
Thus, in a finite number of steps we obtain a path from $\rho$ to $(I,\ldots,I)\in\hom(\Gamma,G)$, since in each step we preserve the relations in $\Gamma$.  This concludes the proof.
\end{proof}

\begin{rem}
Let $\Gamma$ be a tree. From Theorem \ref{thm:connectedRAAG} we know that $\Gamma$ is special flawed.  Therefore, $\Gamma$ is $G$-flawed whenever $\X_\Gamma(G)$ is irreducible.  From Lemma \ref{berg-lem}, we know that $\X_\Gamma(G)$ is connected whenever $G$ is simply-connected.   It is natural to then think, from Theorem \ref{simpletored} and Remark \ref{rem-bootsc}, that trees are $G$-flawed whenever $DG$ is simply-connected.  However, the examples in the next subsection show character varieties of trees $\Gamma$ with simply-connected $G$ which are {\it not} irreducible.  So we cannot conclude that trees are $G$-flawed, even for simply-connected $G$, from Lemma \ref{berg-lem} alone.
\end{rem}

\begin{example}
In \cite{Berg} it is shown that if $\Gamma=\Z\rtimes\Z_{4}$ then
the Kempf-Ness set of $\hom(\Gamma,G)$ is not equal to $\mathcal{N}_{\Gamma}$.
This shows that the strategy used above will not work to prove that
solvable, virtually abelian, virtually nilpotent, nor supersolvable
groups are flawed (although we expect that they are). 
\end{example}

\subsection{Examples: Simple Non-Abelian Non-Free RAAGs}

In this section we consider the simplest non-free, non-abelian RAAG,
which is a star shaped RAAG on 3 vertices. For simplicity, we consider
$G$ to be a simply connected semisimple $\C$-group.

Let $\angle$ be a graph with vertices $\{a,b,c\}$ and edges $\{a,b\}$
and $\{b,c\}$. The corresponding RAAG to $\angle$ admits a presentation:
\[
\Gamma_{\angle}:=\langle a,b,c\ |\ [a,b]=1,\ [b,c]=1\rangle.
\]
For any $\rho\in\hom(\Gamma_{\angle},G)$, letting $B:=\rho(b)$,
we have that $\rho(a):=A$ and $\rho(c):=C$ are in the centralizer
of $B$, denoted $Z_{G}(B)$. Conversely, for any $A,C\in Z_{G}(B)$,
by letting $\rho(a):=A$, $\rho(b):=B$, and $\rho(c)=C$, we define
a $G$-representation of $\Gamma_{\angle}$. Thus,
\begin{equation}
\hom(\Gamma_{\angle},G)=\{(A,B,C)\in G^{3}\ |\ A,C\in Z_{G}(B)\}.\label{eq1}
\end{equation}

Define a map $\pi_{b}:\hom(\Gamma_{\angle},G)\to G$ by $\pi_{b}(\rho)=\rho(b)$. 
\begin{lem}
$\pi_{b}$ is a $G$-equivariant epimorphism. \end{lem}
\begin{proof}
Since $G$ is an algebraic group $\hom(\Gamma_{\angle},G)$ is a subvariety
of $G^{3}$. Thus, $\pi_{b}$ is the restriction (to an algebraic
set) of the projection $G^{3}\to G$, and hence is an algebraic map.
Since for every $B\in G$, there exists $\rho:\Gamma_{\angle}\to G$
defined by $\rho(a)=I=\rho(c)$ and $\rho(b)=B$, we see that $\pi_{b}$
is surjective (and hence an epimorphism). The map $\pi_{b}$ is $G$-equivariant
with respect to conjugation since $\pi_{b}(g\rho g^{-1})=g\rho(b)g^{-1}=g\pi_{b}(\rho)g^{-1}$. 
\end{proof}
By $G$-equivariance we have a map $\hom(\Gamma_{\angle},G)/G\to G/\Ad(G)$,
which restricts to a map $\hom(\Gamma_{\angle},G)^{*}/G\to G/\Ad(G)$.
By post-composing with the projection, $G/\Ad(G)\to G\quot G$ we
obtain a map 
\[
\pi_{b,G}:\X_{\angle}(G)\to G\quot\Ad(G).
\]

\begin{lem}
$\pi_{b,G}$ is continuous and onto, and defines a family over $G\quot\Ad(G)\cong\C^{r}$
where $r=\mathrm{Rank}(G)$. \end{lem}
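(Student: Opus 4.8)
The plan is to obtain continuity for free from functoriality of the GIT quotient, deduce surjectivity by exhibiting trivial representations, and identify both the base and the family structure via Steinberg's theorem. For continuity, note that $\pi_b\colon\hom(\Gamma_\angle,G)\to G$ is a $G$-equivariant morphism of affine $G$-varieties (conjugation on both sides), so composing with the invariant quotient $G\to G\quot\Ad(G)$ produces a $G$-invariant morphism that factors through the GIT quotient and descends to a morphism $\hom(\Gamma_\angle,G)\quot G\to G\quot\Ad(G)$. Under the homeomorphism $\X_\angle(G)\cong\hom(\Gamma_\angle,G)\quot G$ recalled in the introduction, this induced morphism is exactly $\pi_{b,G}$, which is therefore continuous in the analytic topology; equivalently, the $G$-invariant composition $\hom^{*}(\Gamma_\angle,G)\to G\to G\quot\Ad(G)$ factors continuously through the quotient topology on $\X_\angle(G)=\hom^{*}(\Gamma_\angle,G)/G$.

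For surjectivity, recall that the closed conjugation orbits in $G$ are precisely the semisimple classes, so every point of $G\quot\Ad(G)$ has a semisimple representative $B$. The tuple $(I,B,I)$ lies in $\hom(\Gamma_\angle,G)$ because $I$ is central, and it is polystable: its orbit $\{(I,gBg^{-1},I)\mid g\in G\}$ is closed exactly when the orbit of the semisimple $B$ is. Since $\pi_{b,G}([I,B,I])=[B]$, the map is onto. For the base itself, as $G$ is simply connected and semisimple, Steinberg's theorem on the adjoint quotient identifies $G\quot\Ad(G)\cong T/W\cong\C^{r}$ with $r=\mathrm{Rank}(G)$, the coordinates being the $r$ fundamental characters.

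To extract the family structure, I would fix a semisimple $B$ and conjugate so that $\rho(b)=B$; the remaining data is then a tuple $(\rho(a),\rho(c))\in Z_G(B)^{2}$ acted on by the residual stabilizer $Z_G(B)$, so the fiber of $\pi_{b,G}$ over $[B]$ is the rank-two free-group character variety $\X_{\mathsf{F}_2}(Z_G(B))$. Because $G$ is simply connected, $Z_G(B)$ is connected and reductive for every semisimple $B$ (Steinberg's connectedness theorem), so each fiber is a genuine character variety and $\pi_{b,G}$ realizes $\X_\angle(G)$ as a family of $\mathsf{F}_2$-character varieties parametrized by $\C^{r}$. The main obstacle is exactly this fiber identification: one must pass carefully between the global $G$-conjugation on $\X_\angle(G)$ and the residual $Z_G(B)$-action governing the fiber, and verify that the fiber over $[B]$ is precisely $\X_{\mathsf{F}_2}(Z_G(B))$ rather than a larger or smaller quotient. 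The reductivity and connectedness of $Z_G(B)$, which rely on simple connectedness of $G$, are what guarantee each fiber is a well-behaved GIT quotient and make the word ``family'' meaningful, whereas continuity and surjectivity are comparatively formal.
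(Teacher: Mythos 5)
Your core argument is correct and is essentially the paper's own proof: surjectivity via the polystable triples $(I,B,I)$ with $B$ semisimple (using that closed conjugation orbits in a reductive group are exactly the semisimple classes), continuity by descending the continuous $G$-equivariant map $\pi_b$ to the quotient topologies and composing with $G/\Ad(G)\to G\quot\Ad(G)$ (your GIT-functoriality phrasing is an equivalent repackaging of the same step), and the identification $G\quot\Ad(G)\cong\C^{r}$ from Steinberg's theorem, where simple connectedness is used. Where you diverge is in reading ``defines a family'' as demanding an identification of each fiber with $\X_{\mathsf{F}_2}(Z_G(B))$. The paper asks for much less in this lemma: the fibers $\X_{\angle}^{B}(G):=\pi_{b,G}^{-1}([B])$ are only \emph{defined} immediately after it, and their structure is worked out later, case by case ($B$ central, $B$ regular, etc.), in separate lemmata. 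So the step you call ``the main obstacle'' is not part of this statement, and your proof of what the lemma actually asserts is complete. For what it is worth, if you did want to carry out the fiber identification in general, two points you gloss over would need proof: (i) that every class in the fiber over $[B]$ has a representative with middle coordinate exactly $B$, which requires knowing that polystability of $\rho$ forces $\rho(b)$ to be semisimple (this follows because $b$ is central in $\Gamma_{\angle}$, so $\langle b\rangle$ is normal and a Clifford-type/complete-reducibility argument applies); and (ii) that $G$-polystability of $(A,B,C)$ matches $Z_G(B)$-polystability of $(A,C)$, which is a genuine Luna-slice type argument rather than bookkeeping.
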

\begin{proof}
Semisimple elements in $G$ have closed conjugation orbits. Since
for every semisimple $B\in G$, there exists $\rho:\Gamma_{\angle}\to G$
defined by $\rho(a)=I=\rho(c)$ and $\rho(b)=B$, we see that $\pi_{b,G}$
is surjective. Since $\pi_{b}$ is continuous and $G$-equivariant,
the induced map $\hom(\Gamma_{\angle},G)/G\to G/\Ad(G)$ is continuous
with respect to the quotient topology. Consequently, the restriction
of domain to $\hom(\Gamma_{\angle},G)^{*}/G\to G/\Ad(G)$ is continuous.
Finally, the quotient map $G/\Ad(G)\to G\quot G$ is continuous and
so $\pi_{b,G}$ is continuous since composition of continuous maps
is continuous. We note that by \cite{steinberg} that $G\quot\Ad(G)\cong\C^{r}$
where $r$ is the rank of $G$ (since $G$ is simply connected).
\end{proof}
Now define $\hom_{B}(\Gamma_{\angle},G):=\pi_{b}^{-1}(B)$ for $B\in G$.
Thus, we have subvarieties $\pi_{b,G}^{-1}([B]):=\X_{\angle}^{B}(G)$
which are isomorphic to $\hom_{B}(\Gamma_{\angle},G)\quot G$. We
will see examples where these fibers are isomorphic (up to finite
quotients) to free group or free abelian group character varieties.
We handle various cases of the fibers through a series of lemmata.

Equation \eqref{eq1} and the above definitions give: 
\begin{lem}
For every $B\in G$, $\X_{\angle}^{B}(G)\cong(Z_{G}(B)\times Z_{G}(B))\quot G$. 
\end{lem}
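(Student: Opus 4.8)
The plan is to read the fibre off directly from the description \eqref{eq1}, so that the lemma becomes essentially bookkeeping on top of the identification $\X_{\angle}^{B}(G)\cong\hom_B(\Gamma_{\angle},G)\quot G$ recorded just above the statement. First I would fix $B$ and specialize \eqref{eq1}: since $\hom_B(\Gamma_{\angle},G)=\pi_b^{-1}(B)$, it consists exactly of the triples $(A,B,C)$ with $A,C\in Z_G(B)$ and the middle coordinate held constant at $B$. Forgetting this constant coordinate, $(A,B,C)\mapsto(A,C)$, is then an isomorphism of affine varieties
\[
\hom_B(\Gamma_{\angle},G)\cong Z_G(B)\times Z_G(B).
\]
This is the only computation needed, and it is immediate from \eqref{eq1}.

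Next I would track the group action so that the isomorphism descends to the quotients. A conjugation by $g\in G$ sends $(A,B,C)$ to $(gAg^{-1},gBg^{-1},gCg^{-1})$, so it preserves the fibre $\pi_b^{-1}(B)$ precisely when $g\in Z_G(B)$; and for such $g$ the isomorphism above intertwines conjugation on $\hom_B(\Gamma_{\angle},G)$ with the simultaneous conjugation action of $Z_G(B)$ on $Z_G(B)\times Z_G(B)$. Thus the GIT quotient of the fibre induced by the ambient $G$-action on $\hom(\Gamma_{\angle},G)$ is effected by the stabilizer $Z_G(B)$, which is what the symbol $\quot G$ abbreviates here; note that when $B$ is central one has $Z_G(B)=G$ and the fibre is the free-group character variety $(G\times G)\quot G$, consistent with the remark preceding this subsection. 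Combining the equivariant isomorphism of varieties with $\X_{\angle}^{B}(G)\cong\hom_B(\Gamma_{\angle},G)\quot G$ then yields $\X_{\angle}^{B}(G)\cong(Z_G(B)\times Z_G(B))\quot G$, as claimed.

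The genuine content — which I would either cite from the discussion above or, if proving from scratch, isolate as the main step — is the identification $\X_{\angle}^{B}(G)\cong\hom_B(\Gamma_{\angle},G)\quot G$ itself: that restricting to the single slice $\{b=B\}$ recovers the whole fibre $\pi_{b,G}^{-1}([B])$ with the correctly reduced structure group. The clean input for this is that, $G$ being simply connected semisimple, the Steinberg map identifies $G\quot\Ad(G)\cong\C^r$ with semisimple conjugacy classes, so $B$ may be taken semisimple; then its $G$-orbit is closed and $Z_G(B)$ is reductive (being the centralizer of a semisimple element). This is precisely the hypothesis for a Luna-slice argument showing that every closed $G$-orbit over $[B]$ meets the slice and that two slice points are $G$-equivalent if and only if they are $Z_G(B)$-equivalent. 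That reductivity-of-the-stabilizer step is the only place semisimplicity enters, and it is the crux; granting it, the remaining steps are the elementary identification of the first two paragraphs.
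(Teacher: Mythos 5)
Your proposal is correct, and its first two paragraphs are precisely the paper's entire argument: the paper offers no written proof beyond the sentence ``Equation \eqref{eq1} and the above definitions give,'' i.e.\ it reads the fibre off from \eqref{eq1} and invokes the identification $\X_{\angle}^{B}(G)=\pi_{b,G}^{-1}([B])\cong\hom_{B}(\Gamma_{\angle},G)\quot G$ by fiat. Where you genuinely go beyond the paper is your third paragraph: you correctly isolate that slice identification as the real content and supply a Luna-slice justification for it (take $B$ semisimple, so its orbit is closed and $Z_{G}(B)$ is reductive; closed orbits over $[B]$ meet the slice $\{\rho(b)=B\}$; slice points are $G$-equivalent iff $Z_{G}(B)$-equivalent). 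This buys two things the paper leaves implicit. First, it pins down what ``$\quot G$'' must mean on the right-hand side, namely the quotient by the stabilizer $Z_{G}(B)$ of the fibre (consistent with the central case $Z_{G}(B)=G$ in the next lemma). Second, it makes visible that the hypothesis ``for every $B\in G$'' should really be ``for every semisimple $B$'': for non-semisimple $B$ (e.g.\ $B=J_{\pm}$ in the $\SL(2,\C)$ discussion later in the section) $Z_{G}(B)$ is not reductive, the right-hand side does not parse as a GIT quotient, and the paper itself then treats such fibres as empty via polystability. One step in your Luna argument should be made explicit rather than absorbed into the phrase ``Luna-slice argument'': to see that every closed orbit over $[B]$ meets the slice, you need that the middle coordinate of a polystable triple is itself semisimple (not merely has semisimple part conjugate to $B$). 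This follows because that coordinate is central in the Zariski closure of the image of $\rho$, that closure is reductive when the orbit is closed, and central elements of reductive groups have trivial unipotent part; with this observation your argument is complete.
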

Now we consider some special cases. 
\begin{lem}
\label{lem:XB_1}If $B\in Z(G)$, then $\X_{\angle}^{B}(G)$ is an
irreducible component of $\X_{\angle}(G)$. It is isomorphic to $\X_{\mathsf{F}_{2}}(G)$
where $\mathsf{F}_{2}$ is a free group of rank 2. \end{lem}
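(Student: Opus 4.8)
The plan is to treat the two assertions separately, disposing of the isomorphism first and then concentrating on the component statement. The isomorphism is immediate from the preceding lemma: since $B\in Z(G)$ every element of $G$ commutes with $B$, so $Z_G(B)=G$, and the preceding lemma gives $\X_{\angle}^{B}(G)\cong(G\times G)\quot G$. Because $\hom(\mathsf{F}_2,G)=G\times G$, the right-hand side is exactly $\X_{\mathsf{F}_2}(G)$. In particular $\X_{\angle}^{B}(G)$ is irreducible: $G$ is connected, hence $G\times G$ is an irreducible variety, and a GIT quotient of an irreducible affine variety is irreducible.

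To prove that $\X_{\angle}^{B}(G)$ is an irreducible component of $\X_{\angle}(G)$, I would first establish the analogous statement upstairs, namely that $\hom_B(\Gamma_{\angle},G)=\pi_b^{-1}(B)=G\times\{B\}\times G$ is an irreducible component of $\hom(\Gamma_{\angle},G)$, and then descend. The set $\hom_B(\Gamma_{\angle},G)$ is closed (it is the fibre of the morphism $\pi_b$ over the point $B$) and irreducible, of dimension $2\dim G$. Let $W$ be any irreducible component with $\hom_B(\Gamma_{\angle},G)\subseteq W$; it suffices to bound $\dim W\le 2\dim G$. For this I would use the projection $p\colon\hom(\Gamma_{\angle},G)\to G\times G$, $(A,B',C)\mapsto(A,C)$, and choose a pair $(A,C)$ with $Z_G(A)\cap Z_G(C)$ finite (for semisimple non-abelian $G$ one may take two regular semisimple elements whose maximal tori are in general position). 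Then $\rho_0=(A,B,C)\in\hom_B(\Gamma_{\angle},G)$ has finite $p$-fibre in $\hom(\Gamma_{\angle},G)$, since $p^{-1}(A,C)=\{A\}\times(Z_G(A)\cap Z_G(C))\times\{C\}$. By upper semicontinuity of fibre dimension, the generic fibre of $p|_W$ is finite, so $\dim W=\dim\overline{p(W)}\le\dim(G\times G)=2\dim G$. Hence $\dim W=2\dim G=\dim\hom_B(\Gamma_{\angle},G)$, and since both are irreducible with $\hom_B(\Gamma_{\angle},G)\subseteq W$, they coincide; thus $\hom_B(\Gamma_{\angle},G)$ is a component.

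For the descent, let $q\colon\hom(\Gamma_{\angle},G)\to\X_{\angle}(G)$ be the GIT quotient map; since $G$ is connected, every irreducible component of $\hom(\Gamma_{\angle},G)$ is $G$-invariant, and the components of $\X_{\angle}(G)$ are the maximal members among the images $q(W_i)$ of the components $W_i$. I would select $\rho_0=(A,B,C)$ generic in $\hom_B(\Gamma_{\angle},G)$ so that, in addition to $Z_G(A)\cap Z_G(C)$ being finite, the pair $(A,C)$ generates a Zariski-dense subgroup of $G$ (a dense condition for semisimple non-abelian $G$) and $\rho_0$ avoids $\hom_B(\Gamma_{\angle},G)\cap W'$ for every other component $W'$ (a proper closed subset). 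Density of $\langle A,C\rangle$ forces $\overline{\langle A,C\rangle}=G$, which is reductive, so $\rho_0$ is polystable and $G\cdot\rho_0$ is closed; $G$-invariance of components then guarantees that every point of $G\cdot\rho_0$ lies on $\hom_B(\Gamma_{\angle},G)$ and on no other component. Invoking the standard GIT fact that, for a point with closed orbit, $q(\rho_0)\in q(W')$ if and only if $G\cdot\rho_0\subseteq W'$, one concludes that $[\rho_0]$ lies in $q(W')$ only for $W'=\hom_B(\Gamma_{\angle},G)$, i.e. only in $q(\hom_B(\Gamma_{\angle},G))=\X_{\angle}^{B}(G)$; maximality then forces $\X_{\angle}^{B}(G)$ to be a component.

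The routine part is the dimension count on $\hom(\Gamma_{\angle},G)$ once a suitable pair $(A,C)$ is fixed. The delicate point, and the one I expect to require the most care, is the descent: ruling out the possibility that $\X_{\angle}^{B}(G)$ is properly contained in a component of the quotient arising from a \emph{different} component of $\hom(\Gamma_{\angle},G)$. This is exactly where the polystable representative with Zariski-dense image is needed, so that its closed orbit is confined to a single component and the fibre of $q$ over $[\rho_0]$ contains a unique closed orbit; the interaction between closed orbits and irreducible components under the GIT quotient, rather than any single computation, is the substance of the argument.
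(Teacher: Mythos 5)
Your proposal is correct, but it genuinely does more than the paper does. The paper's entire proof of this lemma is your first paragraph: $B\in Z(G)$ gives $Z_{G}(B)=G$, so the preceding lemma yields $\X_{\angle}^{B}(G)\cong G^{2}\quot G\cong\X_{\mathsf{F}_{2}}(G)$; the assertion that this is an irreducible \emph{component} of $\X_{\angle}(G)$ receives no separate argument there (it is only borne out afterwards by the case-by-case enumeration of components in the $\SL(2,\C)$ and $\SL(3,\C)$ examples). Your remaining two paragraphs supply exactly the missing piece, and both steps check out: the fiber-dimension bound for $p(A,B',C)=(A,C)$, using a point of $G\times\{B\}\times G$ whose $p$-fiber $\{A\}\times(Z_{G}(A)\cap Z_{G}(C))\times\{C\}$ is finite, correctly forces any component $W\supseteq G\times\{B\}\times G$ to satisfy $\dim W\le 2\dim G$ and hence to coincide with it; and the descent is handled properly, since a representative with Zariski-dense $\langle A,C\rangle$ has reductive image closure, hence a closed orbit (Richardson's criterion), and a closed orbit disjoint from every other ($G$-invariant, closed) component upstairs cannot map into the image of such a component under the GIT quotient map $q$, so $q(G\times\{B\}\times G)=\X_{\angle}^{B}(G)$ is maximal among images of components. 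Two remarks. First, your restriction to semisimple non-abelian $G$ is not a loss of generality but a necessity: this subsection of the paper standing-assumes $G$ simply connected semisimple, and for a torus the component claim is actually false ($\hom(\Gamma_{\angle},G)=G^{3}$ is irreducible while $\X_{\angle}^{B}(G)\cong G^{2}$ is a proper subvariety), so your explicit invocation of that hypothesis is a point in your favor rather than a defect. Second, density of $\langle A,C\rangle$ already forces $Z_{G}(A)\cap Z_{G}(C)=Z(G)$, which is finite here, so in the descent step your first genericity condition is subsumed by the second. In short, where the paper buys brevity by proving only the isomorphism and deferring the component assertion to the later examples, your argument makes the lemma self-contained.
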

\begin{proof}
In the special case $B\in Z(G)$, we get $Z_{G}(B)=G$, so the previous
lemma shows that 
\[
\X_{\angle}^{B}(G)\cong G^{2}\quot G\cong\hom(\mathsf{F}_{2},G)\quot G=\X_{\mathsf{F}_{2}}(G),
\]
as wanted.
\end{proof}
The following proposition of Springer and Steinberg will be useful
in our analysis. Recall that an element in $g\in G$ is \textit{regular}
if the dimension of its centralizer $Z_{G}(g)$ is minimal among all
centralizers. This minimal dimension is just the rank of $G$; that
is, the dimension of a maximal torus. In fact, for a sufficient general
semisimple element $g$ (such elements are contained in a maximal
torus $T$) it is true that $Z_{G}(g)=T$. 

The following proposition follows from \cite[Pages 206, 221]{SpSt} and \cite[Rem. 2.10]{steinberg}.

\begin{prop}\label{spst} Let $G$ be a simply connected semisimple $\C$-group. If $B$ is regular, then $Z_{G}(B)$ is a maximal torus.   If $A\in Z_{G}(B)$ and $A,B$ are semisimple, then $A,B$ are contained in the same maximal torus.
\end{prop}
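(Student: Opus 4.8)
The plan is to obtain both assertions from the standard structure theory in the references \cite{SpSt,steinberg}, the key inputs being: (a) every semisimple element of $G$ lies in some maximal torus; (b) the centralizer $Z_G(s)$ of a semisimple $s\in G$ is reductive of the same rank as $G$, and --- because $G$ is simply connected --- is \emph{connected} by Steinberg's theorem; and (c) a central semisimple element of a connected reductive group lies in every maximal torus, while all maximal tori of a connected group are conjugate.

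For the first statement I would take $B$ regular (hence, in the case relevant to our application, semisimple). By (a) there is a maximal torus $T$ with $B\in T$, and $T\subseteq Z_G(B)$ since $T$ is abelian. The regularity hypothesis gives $\dim Z_G(B)=\mathrm{Rank}(G)=\dim T$, so $T=Z_G(B)^{\circ}$; connectedness of $Z_G(B)$ from (b) then forces $Z_G(B)=T$, a maximal torus.

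For the second statement, write $H:=Z_G(B)$, so that $A\in H$ because $A$ and $B$ commute. By (b), $H$ is connected and reductive with $\mathrm{Rank}(H)=\mathrm{Rank}(G)$. Since $A$ is semisimple, applying (a) inside $H$ places $A$ in a maximal torus $T'$ of $H$. The element $B$ is central in $H$ (everything in $H$ commutes with $B$) and semisimple, so by (c) it lies in every maximal torus of $H$; in particular $B\in T'$. Finally $T'$ is a maximal torus of $H$ of dimension $\mathrm{Rank}(G)$, and any such torus is automatically a maximal torus of $G$; thus $A,B\in T'$ with $T'$ maximal in $G$, as desired.

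The essential input --- and the only place where simple connectivity of $G$ enters --- is the connectedness of the centralizer $Z_G(B)$ via Steinberg's theorem. This is what lets me pass from control of the identity component to the full centralizer in the first statement, and what guarantees in the second statement that $H$ is a \emph{connected} reductive group to which the torus arguments (a) and (c) apply. Everything else is routine dimension counting together with conjugacy of maximal tori, so I expect that connectedness to be the crux rather than a genuine obstacle, given the simply connected hypothesis.
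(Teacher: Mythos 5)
Your proof is correct and follows essentially the same route as the paper, whose ``proof'' is just a citation of Springer--Steinberg and of Steinberg's connectedness theorem for centralizers of semisimple elements in simply connected groups --- precisely the key input (b) you identify --- combined with the standard dimension count and conjugacy/centrality arguments for tori that you spell out. One point handled well: you correctly observe that the first claim needs $B$ to be regular \emph{and} semisimple (with the paper's definition of regular alone the claim fails, e.g.\ a regular unipotent element has centralizer of dimension $\mathrm{Rank}(G)$ which is not a torus), a hypothesis the paper leaves implicit and which is satisfied in its applications.
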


Let $G_{reg}:=\{g\in G\ |\ g\text{ is regular }\}.$
\begin{prop}
\label{prop:XB-reg}Let $G$ be a simply connected semisimple $\C$-group. If $B\in G_{reg}$, then $\X_{\angle}^{B}(G)$
is a subvariety of $\X_{\angle}(G)$ isomorphic to $\X_{\Z^{2}}(G)$.
Moreover, the closure of 
\[
\bigcup_{B\in G_{reg}}\!\!\!\X_{\angle}^{B}(G)
\]
is a subvariety of $\X_{\angle}(G)$ isomorphic to $\X_{\Z^{3}}(G)$.\end{prop}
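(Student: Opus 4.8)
The plan is to read each fiber of the map $\pi_{b,G}$ over a regular class directly off the structure of the centralizer $Z_G(B)$, and then to reassemble the fibers over the entire regular locus into an abelian character variety.

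For the first assertion, I would begin with Proposition \ref{spst}: when $B\in G_{reg}$ the centralizer $Z_G(B)=T$ is a maximal torus. Substituting this into the lemma that identifies $\X_\angle^B(G)\cong(Z_G(B)\times Z_G(B))\quot G$ turns the fiber into a GIT quotient built from $T\times T$. The feature to exploit is that $T$ is abelian, so any pair $(A,C)\in T\times T$ automatically satisfies $AC=CA$; hence a representation of $\Gamma_\angle$ with regular $\rho(b)=B$ is the same datum as a commuting pair $(A,C)$ carried by the marker $B$. This matches $\X_\angle^B(G)$ with toral commuting pairs, and passing to the quotient I would identify it with $\X_{\Z^2}(G)$ using the description $\X_{\Z^2}(G)\cong T^2/W$; the latter is valid for simply connected $G$ because, by the second clause of Proposition \ref{spst}, every polystable (hence semisimple) commuting pair is conjugate into $T$. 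That $\X_\angle^B(G)$ is a subvariety is automatic, being the fiber $\pi_{b,G}^{-1}([B])$ of an algebraic morphism.

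For the second assertion, the key observation is that over the regular locus all three images $A,B,C$ lie in the single torus $Z_G(B)=T$ and therefore commute pairwise; consequently every representation appearing in $\bigcup_{B\in G_{reg}}\X_\angle^B(G)$ factors through $\Z^3=\langle a,b,c\mid[a,b]=[a,c]=[b,c]=1\rangle$. Thus this union is precisely the image of $\{(A,B,C)\in T^3 : B\text{ regular}\}$ in $\X_\angle(G)$, where two toral triples are identified exactly when they differ by an element of $W=N_G(T)/T$. Since regular elements are dense in $T$, taking closures restores the non-regular values of $B$ and produces $T^3/W$, which I would finally identify with $\X_{\Z^3}(G)$.

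The hard part will be the Weyl-group bookkeeping together with the identification of these GIT quotients with the honest abelian character varieties. Concretely, I must show that toral representations exhaust, up to polystability and closure, the commuting-pair and commuting-triple varieties for simply connected $G$ — for $\Z^2$ this follows cleanly from Proposition \ref{spst}, but for $\Z^3$ one must rule out (or absorb into the closure) polystable commuting triples that are not simultaneously conjugate into a maximal torus, which is exactly the point where simple-connectivity is doing work. One must also verify that conjugation between distinct fibers of $\pi_{b,G}$ yields precisely the quotient by $W$, and control the boundary contributed by non-regular $B$, so that the limiting subvariety is all of $\X_{\Z^3}(G)$ rather than merely a proper piece of it. By contrast, the computation of a single regular fiber is routine once Proposition \ref{spst} is available.
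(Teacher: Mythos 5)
You follow the same route as the paper's proof: Proposition \ref{spst} converts the fiber over a regular class into toral data, and the union over $G_{reg}$, after closure, is meant to give $T^{3}/W\cong\X_{\Z^{3}}(G)$. But the two verifications you defer as ``the hard part'' are not routine bookkeeping; carrying them out forces corrections. For the first assertion: if $g\in G$ conjugates $(A,B,C)$ to $(A',B,C')$ with the \emph{same} regular middle entry $B$, then $g\in Z_{G}(B)=T$, which acts trivially on $T\times T$; so no Weyl identification occurs inside a single fiber, and $\X_{\angle}^{B}(G)$ is $\hom(\Z^{2},T)\cong T^{2}$, not $T^{2}/W\cong\X_{\Z^{2}}(G)$. (This agrees with the paper's own proof, which says the fiber is ``an element in $\hom(\Z^{2},T)$''; the identification with $\X_{\Z^{2}}(G)$ in the proposition's first clause is the discrepancy.) The quotient by $W$ genuinely appears only in the second assertion, where $B$ is allowed to vary: a $G$-conjugation between two toral triples with regular middle entries carries the common centralizer $T$ to itself, hence lies in $N_{G}(T)$ --- that is exactly the ``conjugation between distinct fibers'' you postponed, and it produces $W$-identifications between fibers, never within one.

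For the second assertion, your instinct that one must worry about polystable commuting triples not conjugate into a maximal torus is correct, but your hope that simple connectivity disposes of them is not: connectedness of centralizers of semisimple elements (Proposition \ref{spst}) makes commuting \emph{pairs} toral, while commuting \emph{triples} in simply connected groups can be non-toral. For instance, a rank-three elementary abelian $2$-subgroup of $G_{2}$ (cf.\ Borel--Friedman--Morgan) gives a triple of commuting, finite-order, hence semisimple and polystable elements lying in no maximal torus, so a point of $\X_{\Z^{3}}(G_{2})$ outside $T^{3}/W$. What the closure argument actually produces is $T^{3}/W$; this equals $\X_{\Z^{3}}(G)$ when $G=\SL(n,\C)$ or $\mathrm{Sp}(n,\C)$ (Remark \ref{rem:T/W}), but for general simply connected semisimple $G$ it is only the irreducible component of $\X_{\Z^{3}}(G)$ containing the trivial representation. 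So your outline reproduces the paper's strategy faithfully, but neither your proposal nor the proposition as stated survives the checks you postponed without these amendments.
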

\begin{proof}
If $B$ is regular then $A,B,C$ commute by Proposition \ref{spst},
so they define an element in $\hom(\Z^{2},T)$, and conversely. Moreover,
if the triple $(A,B,C)$ defines a polystable representation then
it consists of semisimple elements, which suffices to prove the first
statement. We now obtain a family of subvarieties $\hom(\Z^{2},Z_{G}(B))\hookrightarrow\hom(\Gamma_{\angle},G)$
parametrized by $G_{reg}.$ This family consists of triples $(A,B,C)$
of semisimple elements which can be simultaneously conjugated to $T$
and $B$ is regular. The closure, thus consists of such triples that
can be simultaneously conjugated to $T$ (without further restriction
on $B$). \end{proof}
\begin{cor}
If $G=\SL(n,\C)$ or $\mathrm{Sp}(n,\C)$ and $B\in G_{reg}$ and
semisimple, then $\X_{\angle}^{B}(G)\cong\X_{\Z^{2}}(G)$ and is therefore
irreducible.\end{cor}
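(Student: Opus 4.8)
The plan is to reduce everything to the single fact that the free abelian character variety $\X_{\Z^{2}}(G)$ is irreducible. Both $\SL(n,\C)$ and $\mathrm{Sp}(n,\C)$ are simply connected and semisimple, and the hypothesis that $B$ is regular \emph{and} semisimple forces $Z_{G}(B)$ to be a maximal torus by Proposition \ref{spst}; thus Proposition \ref{prop:XB-reg} directly supplies an isomorphism $\X_{\angle}^{B}(G)\cong\X_{\Z^{2}}(G)$. Consequently it suffices to show that $\X_{\Z^{2}}(G)=\hom(\Z^{2},G)\quot G$ is irreducible for these two groups.

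For this I would identify $\X_{\Z^{2}}(G)$ with $(T\times T)/W$, where $T$ is a maximal torus and $W$ its Weyl group. The GIT quotient records closed orbits, i.e.\ polystable representations, and a polystable representation of $\Z^{2}$ must send the two generators to commuting \emph{semisimple} elements: if $(A,C)$ commute with Jordan decompositions $A=A_{s}A_{u}$, $C=C_{s}C_{u}$, then a one-parameter subgroup degenerating the unipotent parts places $(A_{s},C_{s})$ in the orbit closure, so closedness forces $A_{u}=C_{u}=1$. For $G=\SL(n,\C)$ commuting diagonalizable matrices are simultaneously diagonalizable by elementary linear algebra, and for $G=\mathrm{Sp}(n,\C)$ the joint eigenspace decomposition can be chosen compatibly with the symplectic form; in either case (this is exactly the second assertion of Proposition \ref{spst}) every polystable pair is $G$-conjugate into $T\times T$.

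It then remains to see that the inclusion $T\hookrightarrow G$ induces a bijection $(T\times T)/W\to\X_{\Z^{2}}(G)$: surjectivity is the previous step, and for injectivity one checks that if $g$ conjugates a pair $(t_{1},t_{2})\in T\times T$ to another pair $(s_{1},s_{2})\in T\times T$, then, since $T$ and $gTg^{-1}$ are both maximal tori of the connected reductive centralizer $Z_{G}(\overline{\langle s_{1},s_{2}\rangle})$, one can correct $g$ by an element of that centralizer (which fixes $s_{1},s_{2}$) to land in $N_{G}(T)$, so that the two pairs are already $W$-conjugate.

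Finally I would conclude: $T\times T\cong(\C^{*})^{2r}$, with $r=\mathrm{rank}\,G$, is irreducible, hence so is its quotient by the finite group $W$, and the image of this irreducible variety under the surjective morphism above is $\X_{\Z^{2}}(G)$, which is therefore irreducible; combined with the isomorphism of the first paragraph, $\X_{\angle}^{B}(G)$ is irreducible. The one genuinely delicate point is the identification $\X_{\Z^{2}}(G)\cong(T\times T)/W$, and in particular the simultaneous-diagonalizability step, which is where working in types $A$ and $C$ is convenient. An alternative that makes the restriction to $\SL(n,\C)$ and $\mathrm{Sp}(n,\C)$ essential is to instead invoke the (type-dependent) irreducibility of the commuting variety $\hom(\Z^{2},G)$ itself for these classical groups, from which irreducibility of the GIT quotient $\X_{\Z^{2}}(G)$ is immediate.
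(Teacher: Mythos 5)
Your proposal is correct, and its first half coincides with the paper's own argument: Proposition \ref{spst} makes $Z_{G}(B)$ a maximal torus, and Proposition \ref{prop:XB-reg} then gives $\X_{\angle}^{B}(G)\cong\X_{\Z^{2}}(G)$. Where you diverge is the irreducibility step. The paper disposes of it by citation: it invokes \cite{FlLa4} for the fact that $\hom(\Z^{2},G)\quot G$ is irreducible if and only if $G$ is simply connected, together with the identification $\X_{\Z^{r}}(G)\cong T^{r}/W$ recorded in Remark \ref{rem:T/W}. You instead reprove this identification from scratch: a one-parameter-subgroup degeneration of the unipotent parts shows polystable pairs are commuting semisimple pairs, Proposition \ref{spst} conjugates such a pair into $T\times T$, and irreducibility follows since the irreducible variety $T\times T$ maps onto $\X_{\Z^{2}}(G)$. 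This buys self-containedness and makes explicit exactly where simple connectedness enters (Springer--Steinberg), at the cost of length. Two small remarks: in your injectivity step the centralizer $Z_{G}(s_{1},s_{2})$ is not obviously connected, so you should pass to its identity component (both maximal tori lie in it, so the argument is unharmed); and note that injectivity is not even needed for the corollary as stated --- surjectivity of the morphism $T\times T\to\X_{\Z^{2}}(G)$ already yields irreducibility, with the bijectivity onto $(T\times T)/W$ only required for the finer statement of Remark \ref{rem:T/W}.
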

\begin{proof}
In these cases $Z_{G}(B)$ is a maximal torus $T$. We thus have $\X_{\angle}^{B}(G)\cong\X_{\Z^{2}}(Z_{G}(B))\cong T^{2}/W\cong\X_{\Z^{2}}(G).$
From \cite{FlLa4}, $\hom(\Z^{2},G)\quot G$ is irreducible if and
only if $G$ is simply connected. \end{proof}

\begin{rem}
\label{rem:T/W}(1) More generally, the above corollary holds whenever
$Z_{G}(B)$ is a maximal torus, which is true when $B$ is regular
and $G$ is simple and simply connected, by \cite{steinberg}. (2) In \cite{Sik-Ab,FlLa4} it is shown that, for $G=\SL(n,\C)$ and
$G=\mathrm{Sp}(n,\C)$ there is also an isomorphism $\X_{\Z^{r}}(G)\cong T^{r}/W$,
for every $r\in\mathbb{N}$ where $W$ is the Weyl group of $G$,
acting diagonally. See also \cite{FS}. 
\end{rem}

\begin{prop}\label{prop:notpoly}
Let $G=\SL(n,\C)$, $B\in G$ and $A,C\in Z_G(B)$.  If $B$ is not diagonalizable but is non-derogatory, then the triple $(A,B,C)$ corresponding to a point in $\hom(\Gamma_\angle,G)$ is not polystable.
\end{prop}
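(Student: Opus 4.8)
The plan is to prove directly that the conjugation orbit of $(A,B,C)$ is \emph{not} closed, by exhibiting a one-parameter degeneration whose limit is a polystable representation lying in a different orbit. The crucial starting point is that $B$ is non-derogatory, so its minimal and characteristic polynomials coincide; by a classical fact the centralizer of such a matrix in $M_n(\C)$ is exactly the commutative algebra $\C[B]$ of polynomials in $B$. Since $A,C\in Z_G(B)\subset Z_{M_n(\C)}(B)=\C[B]$, we may write $A=p(B)$ and $C=q(B)$ for polynomials $p,q$. In particular $A$, $B$, $C$ commute pairwise and are simultaneously triangularizable, and the whole triple is determined by $B$ together with two polynomials.

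First I would conjugate so that $B$ is in Jordan canonical form. Being non-derogatory and not diagonalizable, $B$ is block diagonal with exactly one Jordan block per eigenvalue and at least one block of size $\ge 2$. Then $A=p(B)$ and $C=q(B)$ are block diagonal as well, each block being an upper-triangular Toeplitz matrix whose constant diagonal entry is $p(\lambda)$, respectively $q(\lambda)$, for the eigenvalue $\lambda$ of that block. Next I would introduce the one-parameter subgroup $\mu(t)=\mathrm{diag}(t^{d_1},\dots,t^{d_n})$, with integer weights $d_i$ that strictly decrease along each Jordan block; replacing $d_i$ by $n d_i-\sum_j d_j$ we may assume in addition that $\sum_i d_i=0$, so that $\mu(t)\in\SL(n,\C)$, without altering the limiting behavior. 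Conjugation by $\mu(t)$ multiplies the $(i,j)$ entry of any matrix by $t^{d_i-d_j}$, so every strictly-upper-triangular entry of $A$, $B$, $C$ gets scaled by a \emph{positive} power of $t$, while the diagonal entries are fixed.

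The key observation is then that $\lim_{t\to 0}\mu(t)\,(A,B,C)\,\mu(t)^{-1}$ exists and equals the triple of semisimple parts $(A_s,B_s,C_s)=(p(B_s),B_s,q(B_s))$, obtained by deleting all off-diagonal entries in the Jordan basis. This limit is again a point of $\hom(\Gamma_{\angle},G)$, since the three matrices still commute, and by construction it lies in the closure $\overline{G\cdot(A,B,C)}$. To conclude that the orbit is not closed, it remains to check that this limit is \emph{not} in the orbit of $(A,B,C)$: because $B$ is not diagonalizable, $B$ and its semisimple part $B_s$ have different Jordan types and so are not $G$-conjugate, whence $(A,B,C)$ and $(A_s,B_s,C_s)$ lie in distinct conjugation orbits. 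Therefore $(A,B,C)$ has a non-closed orbit and is not polystable. Conceptually, the point is that the associative algebra generated by $A,B,C$ is $\C[B]$, which fails to be semisimple precisely because $B$ is not diagonalizable, so the representation on $\C^n$ is reducible but not completely reducible.

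The main obstacle I anticipate is the verification in the middle two paragraphs that a \emph{single} torus $\mu(t)$ simultaneously degenerates $A=p(B)$ and $C=q(B)$ to their diagonal parts and that the limit genuinely exists; this is exactly where the identification $A,C\in\C[B]$ is used, guaranteeing that $A$ and $C$ share the block-triangular structure of $B$ in the Jordan basis. The remaining ingredients --- the description of the centralizer of a non-derogatory matrix and the non-conjugacy of $B$ with $B_s$ --- are standard.
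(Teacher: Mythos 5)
Your proof is correct and follows essentially the same route as the paper's: both rest on the Horn--Johnson fact that the centralizer of a non-derogatory matrix is $\C[B]$, so that $A,C$ are polynomials in $B$ and the triple is simultaneously triangularizable, whence non-polystability follows from $B$ being non-diagonalizable. The only difference is that you make explicit, via the one-parameter subgroup $\mu(t)$, the degeneration to the semisimplification that the paper leaves as an implicit standard fact about closed orbits of triangularizable tuples.
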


\begin{proof}
$B$ is non-derogatory if and only if its minimal polynomial is equal to it characteristic polynomial (this is equivalent to the eigenvalues of the Jordan blocks of $B$ being distinct).  By \cite[Theorem 3.2.4.2]{HoJo} , we conclude that $A$ and $C$ are polynomials in $B$.  Since $B$ is upper-triangulizable it follows that all three are upper-triangularizable and hence $(A,B,C)$ is not polystable since $B$ is non-diagonalizable.
\end{proof}

We now look at a couple of special cases.

\subsubsection{$G=\SL(2,\C)$.}

We consider three cases for $B$: 
\begin{enumerate}
\item $B=\pm I:=\pm\left(\begin{array}{cc}
1 & 0\\
0 & 1
\end{array}\right)$, 
\item $B$ is conjugate to $J_{\pm}:=\left(\begin{array}{cc}
\pm1 & 1\\
0 & \pm1
\end{array}\right)$, or 
\item $B$ is regular (conjugate to a diagonal matrix with distinct eigenvalues). 
\end{enumerate}
In Case (1), by Lemma \eqref{lem:XB_1} $\X_{\angle}^{B}(G)$ is isomorphic
to $\X_{\mathsf{F}_{2}}(G)$ which is in this case $$\SL_{2}(\C)^{2}\quot\SL(2,\C)\cong\C^{3}$$
(see \cite{Vogt}). $\X_{\mathsf{F}_{2}}(\SL(2,\mathbb{C}))$
strong deformation retracts to $\SU(2)^{2}/\SU(2)$ which is homeomorphic
to a closed 3-ball (see \cite{FlLa1}).

In Case (2), $\X_{\angle}^{B}(G)$ is empty. Indeed, by Proposition \ref{prop:notpoly}
all triples $(A,B,C)$ with $B=\pm J$ such that $B$
commutes with both $A$ and $C$ are simultaneously upper-triangular; so the corresponding representation will not be polystable.

In Case (3), since $B$ is regular, by combining \eqref{prop:XB-reg}
with Remark \eqref{rem:T/W}, we get $\X_{\angle}^{B}(G)\cong T^{2}/W$
where $W\cong\mathbb{Z}_{2}$ is the Weyl group corresponding to $T\cong\mathbb{C}^{*}$.
This space strong deformation retracts to 
\[
\hom(\Z^{2},\SU(2))/\SU(2)\cong(S^{1})^{2}/\Z_{2}\cong S^{2};
\]
see \cite[Page 20]{CFLO1}. By Proposition \eqref{prop:XB-reg} we
have: 
\[
\overline{\bigcup_{B\in G_{reg}}\!\!\!\X_{\angle}^{B}(G)}\cong\X_{\Z^{3}}(G)\cong T^{3}/W.
\]
This space strong deformation retracts to 
\[
\hom(\Z^{3},\SU(2))/\SU(2)\cong(S^{1})^{3}/\Z_{2},
\]
which is a 3-dimensional orbifold; see \cite[Page 23]{CFLO1} for
details and a visualization.

Summarizing, for $G=\SL(2,\mathbb{C})$, $\X_{\angle}(G)$ consists
of exactly three irreducible (Zariski closed) components, two corresponding
to Case (1) and isomorphic to a free group character variety $\X_{\mathsf{F}_{2}}(G)\cong\mathbb{C}^{3}$,
and the third, corresponding to Case (3), is isomorphic to a free
abelian group character variety $\X_{\Z^{3}}(G)\cong T^{3}/W$ (also
3 dimensional). See Figure \ref{sl2fig} for a schematic drawing of
this example. We can see explicitly that each component strong deformation
retracts to the corresponding $\SU(2)$-character variety and the
union of those retracts is exactly $\X_{\angle}(\SU(2))$. 

Note that the singular locus of each component $\X_{\angle}^{\pm I}(G)$,
where $A$ and $C$ commute (and $B=\pm I$), is exactly where they
intersect the other irreducible component (see \cite{FlLa2}).

\begin{figure}[ht!]
\includegraphics[scale=0.4]{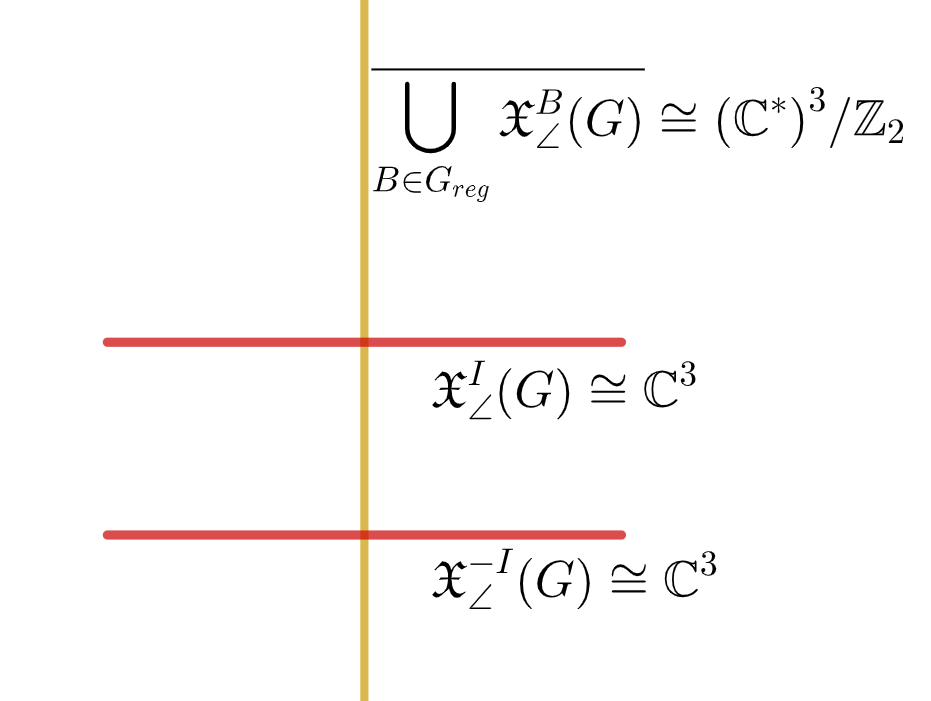} \caption{$\X_{\angle}(\SL(2,\C))$}
\label{sl2fig} 
\end{figure}

\subsubsection{$G=\SL(3,\C)$.}

Now there are four cases to consider for the ``node'' $b$: 
\begin{enumerate}
\item $B$ is central, so that $B\in\{I,\,\omega I,\,\omega^{2}I\}$ where
$I$ is the identity matrix, and $\omega$ is a third root of unity.
By Lemma \eqref{lem:XB_1}, $\X_{\angle}^{B}(G)\cong\X_{\mathsf{F}_{2}}(G),$
where $\mathsf{F}_{2}$ is a free group of rank 2, a branched double cover of $\C^8$ (see \cite{La0,law}) homotopic to $S^8$ (see \cite{FlLa1}).  

\item $B$ is upper triangularizable but not diagonalizable (and hence has
repeated eigenvalues). Writing $B$ in Jordan form, one concludes
that the commutation of $B$ with $A$ and $C$ implies that either $A,C$ are both upper-triangular too (Proposition \ref{prop:notpoly}), or that they are both simultaneously upper-triangulizable with $B$ (an easy calculation).  Either way the triple $(A,B,C)$ does not correspond to a polystable $G$-representation. Hence $\X_{\angle}^{B}(G)$ is empty (by definition). 
\item $B$ is regular (diagonalizable with distinct eigenvalues), which
implies 
\[
\X_{\angle}^{B}(G)\cong\X_{\Z\times\Z}(G)\cong T^{2}/W,
\]
by Proposition \eqref{prop:XB-reg} where $T\cong(\mathbb{C}^{*})^{2}$
is a maximal torus and the Weyl group is the symmetric group on 3
letters $W\cong S_{3}$. So, this is a 4-dimensional orbifold.
\item $B$ is diagonalizable with two repeated eigenvalues, but not central.
This last case is new (compared to the case $\SL(2,\C)$) and so we
detail it below. 
\end{enumerate}
Denote $B_{\lambda}:=\left(\begin{array}{ccc}
\lambda & 0 & 0\\
0 & \lambda & 0\\
0 & 0 & \lambda^{-2}
\end{array}\right)$ where $\lambda^{3}\not=1$. If $A,C\in Z_{G}(B)$ and $(A,B,C)$
is polystable, then $A,C$ are both of the form $\left(\begin{array}{cc}
X & \mathbf{0}^{\dagger}\\
\mathbf{0} & \mathrm{Det}(X)^{-1}
\end{array}\right)$ where $X\in\GL(2,\C)$, $\dagger$ signifies transpose, and $\mathbf{0}=(0,0)$.
Hence, $\X_{\angle}^{B}(G)\cong\X_{F_{2}}(\GL(2,\C))$, which is a
variety of dimension 5.

By varying $B$ over regular elements (a 2 dimensional variety) in
Case (3), and over $B_{\lambda}$ in Case (4), and taking their closures
as before, we obtain 2 irreducible components:
\[
\overline{\bigcup_{B\in G_{reg}}\!\!\!\X_{\angle}^{B}(G)},\quad\mbox{ and }\quad\overline{\bigcup_{B_{\lambda}}\X_{\angle}^{B_{\lambda}}(G)},
\]
both of dimension 6, and both fibered by $\SL(3,\mathbb{C})$-character
varieties of either $\mathbb{Z}^{2}$ or $\mathsf{F}_{2}$. Figure
\ref{sl3fig} is a schematic drawing of this example.  Note that the singular locus of each $\X_\angle^{B_\lambda}(G)$ (the blue curve) intersects the abelian locus $T^3/S_3$ (yellow curve), so the diagram is slightly misleading in that there is a continuum of such intersections.

As with $\SL(2,\C)$, each of these cases corresponds to a character
variety known to strong deformation retract as required, and the SDR restricts to the intersections, providing a SDR on the whole space.

\begin{figure}[ht!]
\includegraphics[scale=1.6]{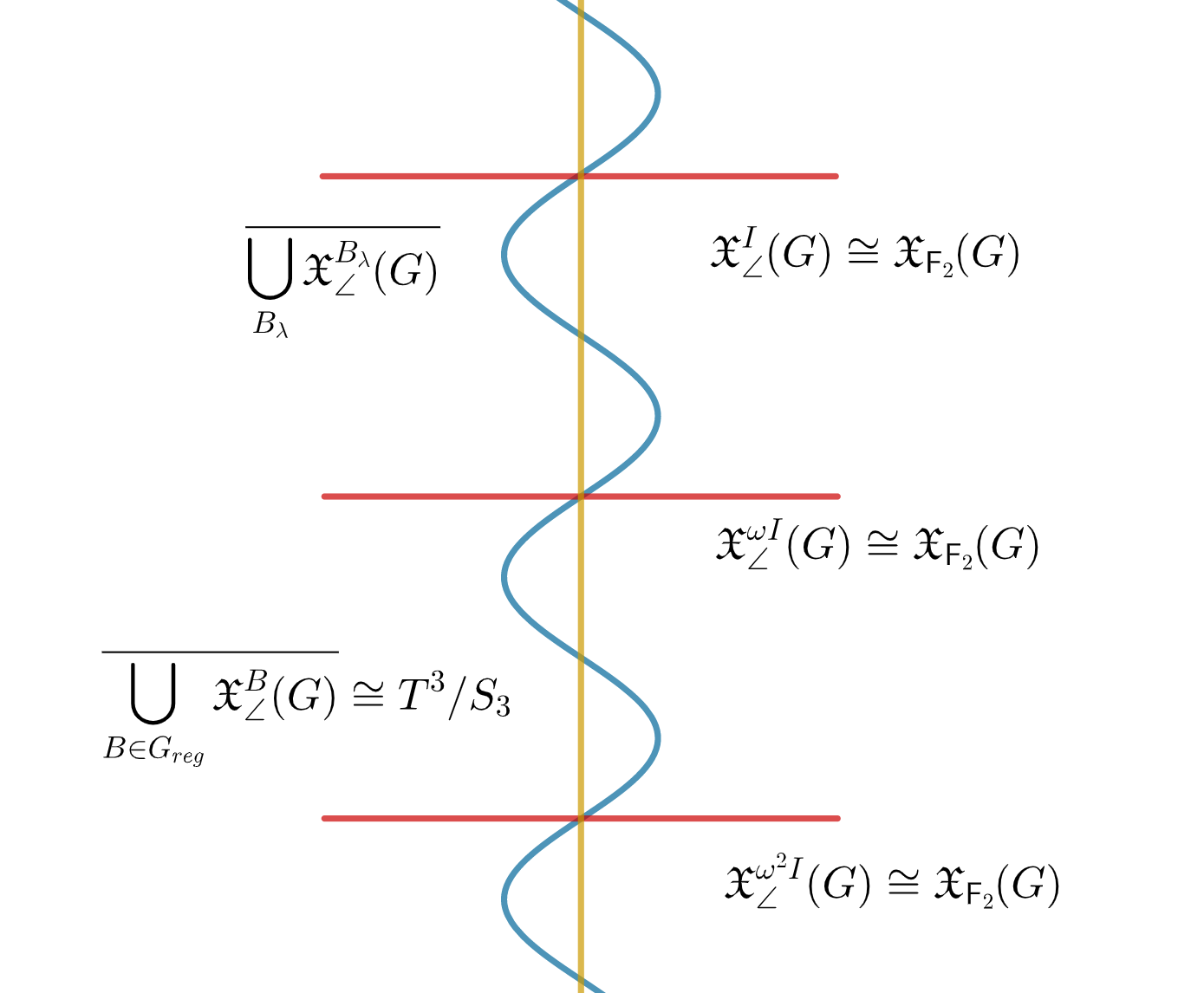} \caption{$\X_{\angle}(\SL(3,\C))$}
\label{sl3fig} 
\end{figure}

Notice that the abelian component in both the above examples intersects every other component.  It would be interesting, given Theorem \ref{thm:connectedRAAG}, to determine if this is a general phenomenon for connected RAAGs.

\section{Direct Products with Finite Groups}

\label{NF}In this last section, we consider some classes of flawed
groups which have a finite group $F$ as a direct (cartesian) factor.
We consider products $\Gamma=F\times G$ where $G$ is either free
or nilpotent. This extends the class of flawed groups further as finite-by-nilpotent
and finite-by-free groups are not, in general, free products of nilpotent
groups as the next example shows. 
\begin{example}
Let $A$ and $B$ be groups of order greater than 3 (possibly infinite).
By \cite[Proposition 4]{trees} the free product $A*B$ contains a
free group of rank at least 2. Let $\Gamma$ be a direct product of
an finitely generated abelian groups $A$ with a finite non-nilpotent
group $F$. Then $\Gamma$ is finite by nilpotent but not nilpotent.
If $\Gamma$ was a free product of two or more non-trivial nilpotent
groups, then it would contain a free group of rank at least 2 by the
above reference; which it does not. 
\end{example}

\subsection{Direct Products of Finite Groups with Free Groups are Flawed}

Let $\Gamma$ be isomorphic to a product $F\times\mathsf{F}_{r}$,
where $F$ is a finite group and $\mathsf{F}_{r}$ is a free group
of rank $r$. 
\begin{thm}
\label{thm:freebyfinite} Let $F$ be a finite group. Then $\Gamma\cong\mathsf{F}_{r}\times F$
is flawed.\end{thm}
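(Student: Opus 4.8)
The plan is to show that $\Gamma\cong\mathsf{F}_{r}\times F$ is \emph{strongly} flawed and then invoke Theorem \ref{thm:strongimplies}. As usual, assume $G\subset\GL(n,\C)$ with $K=\Fix_{\Theta}(G)\subset\SU(n)$ for the Cartan involution $\Theta(g)=(g^{*})^{-1}$ and Cartan decomposition $\lieg=\liek\oplus\liep$. A homomorphism $\rho\in\hom(\Gamma,G)$ is precisely a pair $(\rho_{F},(g_{1},\dots,g_{r}))$, where $\rho_{F}:=\rho|_{F}\in\hom(F,G)$ and $g_{i}:=\rho(w_{i})$ for the free generators $w_{i}$; the only relations to satisfy are those of $F$ together with the commutations $[\rho(f),\rho(w_{i})]=1$, and the latter say exactly that $g_{i}\in Z_{G}(\rho_{F}(F))$. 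The subspace $\hom(\Gamma,K)$ consists of the pairs with $\rho_{F}\in\hom(F,K)$ and all $g_{i}\in K$. I will build the required $K$-equivariant SDR in two phases: first move $\rho_{F}$ into $\hom(F,K)$ while conjugating the whole representation, then retract the $g_{i}$ into the centralizer's maximal compact.

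Phase A uses the Kempf--Ness retraction of $V=\hom(F,G)$. Since $F$ is finite, every $\rho_{F}$ has finite (hence polystable, closed-orbit) image, so by Theorem \ref{schwarzneeeman} the Kempf--Ness set is $\hom(F,K)$ and the SDR moves each $\rho_{F}$ \emph{within its orbit} to its unique minimal vector. That minimal vector determines a point of the symmetric space $G/K$, and the geodesic from the basepoint to it yields a canonical $X_{\rho_{F}}\in\liep$, depending continuously and $K$-equivariantly on $\rho_{F}$, with $X_{\rho_{F}}=0$ whenever $\rho_{F}\in\hom(F,K)$. Setting $g_{t}(\rho_{F}):=\exp(tX_{\rho_{F}})\in G$, I apply this \emph{same} conjugation to the entire representation: $(\rho_{F},(g_{i}))\mapsto(g_{t}(\rho_{F})\cdot\rho_{F},\,(g_{t}(\rho_{F})\,g_{i}\,g_{t}(\rho_{F})^{-1}))$. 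Conjugating a whole representation by one element of $G$ preserves all relations of $\Gamma$, so this stays in $\hom(\Gamma,G)$; it is $K$-equivariant by $K$-equivariance of $X_{\rho_{F}}$, and it fixes $\hom(\Gamma,K)$ pointwise since $g_{t}\equiv\id$ there.

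After Phase A we may assume $\rho_{F}\in\hom(F,K)$, so $\rho_{F}(F)\subset K$ is $\Theta$-stable and $Z:=Z_{G}(\rho_{F}(F))$ is a $\Theta$-stable reductive subgroup with maximal compact $Z\cap K=Z_{K}(\rho_{F}(F))$. By uniqueness of polar decomposition and $\Theta$-stability of $Z$, the global decomposition $g_{i}=u_{i}\exp(Y_{i})$ ($u_{i}\in K$, $Y_{i}\in\liep$) of an element $g_{i}\in Z$ automatically satisfies $u_{i}\in Z\cap K$ and $Y_{i}\in\liep\cap\Lie(Z)$. Phase B is then the scaling $(\rho_{F},(g_{i}))\mapsto(\rho_{F},(u_{i}\exp((1-s)Y_{i})))$, $s\in[0,1]$, which is exactly the free-group SDR (Example \ref{exa:Finitely-generated-free}) performed fibrewise but written through the global Cartan decomposition, hence manifestly continuous on $\{\rho_{F}\in\hom(F,K)\}$. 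It keeps each $g_{i}$ inside $Z$ (so commutation with $\rho_{F}$ is preserved), is $K$-equivariant, and fixes $\hom(\Gamma,K)$ (where $Y_{i}=0$). Concatenating Phases A and B gives a $K$-equivariant SDR of $\hom(\Gamma,G)$ onto $\hom(\Gamma,K)$, so $\Gamma$ is strongly flawed, hence flawed.

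The main obstacle is Phase A: one must realize the Kempf--Ness retraction of $\hom(F,G)$ by a \emph{single-valued, continuous, $K$-equivariant} conjugation $g_{t}(\rho_{F})\in G$, so that dragging the $g_{i}$ is unambiguous. A priori the conjugating element is defined only modulo the stabilizer $Z_{G}(\rho_{F}(F))$, which is nonabelian and would render the dragged $g_{i}$ ill-defined; the geodesic description in the nonpositively curved space $G/K$ resolves this, since the minimal vector picks out a unique point, hence a unique $X_{\rho_{F}}\in\liep$ (this is the flow used in \cite{CFLO1,CFLO2}). A secondary, standard point is the continuity of the global polar decomposition in Phase B. Finally, since the construction is built entirely from the Cartan data, the SDR commutes with a real structure, so Theorem \ref{thm:real} upgrades the conclusion to real flawedness as well.
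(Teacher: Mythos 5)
Your strategy is genuinely different from the paper's: the paper never attempts strong flawedness here, but works entirely at the level of the polystable quotient, conjugating the finite factor into $K$, passing to the slice $\hom_0$ of polystable representations with unitary $B_j$'s, and running the KAK-scaling homotopy of Proposition \ref{prop:commuting-deformation} with the $B_j$'s held fixed. Your Phase B is essentially that same homotopy (polar rather than KAK form, performed inside the $\Theta$-stable centralizer $Z=Z_G(\rho_F(F))$), and that phase is sound. The problem is Phase A, and it is a genuine gap rather than a technicality, precisely at the point you yourself flag as the main obstacle.

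You need a single-valued, continuous, $K$-equivariant assignment $\rho_F\mapsto X_{\rho_F}\in\liep$ with $\exp(X_{\rho_F})\cdot\rho_F\in\hom(F,K)$, and you justify it by asserting that ``the minimal vector picks out a unique point'' of the symmetric space. This is false. The minimal vectors in the closed orbit $G\cdot\rho_F$ are not unique: by Kempf--Ness theory they form an entire $K$-orbit. Worse, even after fixing one minimal vector $w$, the set of $g$ with $g\cdot\rho_F=w$ is a coset $g_0Z$ of the stabilizer $Z=Z_G(\rho_F(F))$, which is a \emph{noncompact} reductive group; so $w$ determines a point of $G/Z$, not of $G/K$, and no canonical $X\in\liep$ falls out. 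Equivalently, the minimum set of the Kempf--Ness function of $\rho_F$ on the symmetric space is a totally geodesic copy of the symmetric space of $Z$ --- positive-dimensional exactly when $Z$ is noncompact --- never a single point unless $Z$ is compact. This is exactly the ``defined only modulo the stabilizer'' ambiguity you name, and the geodesic language does not remove it; as written, the dragged $g_i$'s are ill-defined and Phase A collapses. The conclusion you are after (strong flawedness, which is stronger than what the paper proves) does appear to be true and reachable along your lines, but the correct tool is different: since $F$ is finite, representation rigidity makes $\hom(F,G)$ a finite disjoint union of closed orbits $G\cdot\rho_j\cong G/Z_j$ with $\rho_j$ unitary and $Z_j=Z_G(\rho_j(F))$ a $\Theta$-stable reductive subgroup, and Mostow's decomposition $G=K\exp\bigl(\liep\cap\Lie(Z_j)^{\perp}\bigr)Z_j$ (unique up to $K\cap Z_j$) supplies exactly the canonical transversal of conjugating elements you want; the whole component of $\hom(\Gamma,G)$ over $G\cdot\rho_j$ is then the associated bundle $G\times_{Z_j}Z_j^{\,r}$, on which the Mostow coordinate and your fibrewise polar scaling give the desired $K$-equivariant SDR. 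Alternatively one can take the nearest-point projection of the basepoint onto the convex minimum set in the nonpositively curved space, but then the continuity of $\rho_F\mapsto X_{\rho_F}$ is a claim requiring proof, which you do not supply.
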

\begin{proof}
Consider a presentation of $\Gamma$ of the form: 
\[
\Gamma=\left\langle a_{1},\cdots,a_{r},b_{1}\cdots\,b_{s}\,\mid\,[a_{i},b_{j}]=1,\ R_{k}(b)\right\rangle ,
\]
where $R_{k}(b)$ denote relations only among the $b_{j}$'s (the
$a_{i}$'s are free generators of the $\mathsf{F}_{r}$ factor).

Let $\rho:\Gamma\to G$ be a polystable representation to a reductive
$\mathbb{C}$-group, and denote $A_{i}:=\rho(a_{i})$ and $B_{j}:=\rho(b_{j})$.
First we note that all $B_{j}$'s are elliptic. Indeed, being of finite
order implies that all their eigenvalues are complex numbers of unit
norm. Moreover, since the group they generate is compact, and all
such maximal compact are conjugated, there is a $g\in G$ that simultaneously
conjugates all $B_{j}$ into our fixed maximal compact $K\subset G$.

The proof now proceeds as in the proof of Theorem \ref{thm:star-RAAG}:
Denote by 
\[
\hom_{0}(\Gamma,G)\subset\hom^{ps}(\Gamma,G)
\]
the subset of polystable representations with all $B_{i}\in K$. Since,
in every $G$-orbit there is $g\in G$ so so that $g\cdot B_{j}\in K$,
there is a natural identification between the orbit spaces: 
\[
\hom^{ps}(\Gamma,G)\quot G\cong\hom_{0}(\Gamma,G)/K.
\]
Now, as before, write the KAK decompositions: $A_{i}=k_{i}e^{x_{i}}h_{i}^{*},\quad i=1,\cdots,r$,
let $A_{i}(t):=k_{i}e^{tx_{i}}h_{i}^{*}$, and define a homotopy:
\begin{eqnarray*}
H:[0,1]\times\hom_{0}(\Gamma,G) & \to & \hom_{0}(\Gamma,G)\\
(t,A_{i},B_{j}) & \mapsto & (A_{i}(t),B_{j})
\end{eqnarray*}
(so $B_{j}\in K$ are kept fixed for all $t$). As before, $H$ is
well defined and continuous since for every $t$ and $A_{i}\in G$,
the element $A_{i}(t)$ is the same regardless of the initial choices
($x_{i},k_{i},h_{i}$) for $A_{i}=A_{i}(0)$. Moreover, Proposition
\ref{prop:commuting-deformation} shows that the commutation relations
$[A_{i}(t),B_{j}]=1$ are satisfied for all $t\in[0,1]$. Since this
homotopy is $K$-equivariant, and $\hom(\Gamma,K)/K$ is kept fixed,
we have determined a SDR from $\hom_{0}(\Gamma,G)/K$ to $\hom(\Gamma,K)/K$,
and so a SDR $\hom(\Gamma,K)/K\hookrightarrow\hom^{ps}(\Gamma,G)\quot G$
as wanted. 
\end{proof}

\subsection{Direct Products of Finite Groups with Nilpotent Groups are Special Flawed}
\begin{thm}
\label{thm:nilbyfinite} If $\Gamma$ is a direct product of a nilpotent
group with a finite group, then $\Gamma$ is special flawed.\end{thm}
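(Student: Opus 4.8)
The plan is to stratify $\X_\Gamma(G)$ by the conjugacy type of the restriction to the finite factor and to isolate the stratum of the trivial type, which we already control via $N$. Write $\Gamma=N\times F$ with $N$ finitely generated nilpotent and $F$ finite, and let $p\colon\Gamma\to F$ be the projection. Post-composition induces a continuous map $\X_\Gamma(G)\to\X_F(G)$, $[\rho]\mapsto[\rho|_F]$. Since $F$ is finite, every homomorphism $F\to G$ is semisimple and rigid (in characteristic zero $H^1(F,\Lie(G))=0$), so $\hom(F,G)$ is a finite disjoint union of closed conjugation orbits and $\X_F(G)$ is a finite discrete set; in particular the class of the trivial representation is isolated.

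First I would identify the distinguished stratum. The preimage of the trivial class consists exactly of those $\rho\colon N\times F\to G$ with $\rho|_F$ trivial; such a $\rho$ is determined by $\rho|_N\in\hom(N,G)$, the relations $[\rho(n),\rho(f)]=1$ being automatic, and its conjugation orbit in $G^{r+s}$ is simply the orbit of $\rho|_N$ together with a string of fixed identities. Hence polystability of $\rho$ is \emph{immediately} equivalent to polystability of $\rho|_N$, and this stratum is canonically $\X_N(G)\subset\X_\Gamma(G)$. Because the trivial class is isolated in $\X_F(G)$, this copy of $\X_N(G)$ is open and closed, i.e. a union of connected components of $\X_\Gamma(G)$.

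Next I would import what is known about $N$. By Theorem \ref{thm:main}, $N$ is strongly flawed, hence flawed (Theorem \ref{thm:strongimplies}), so $\X_N(G)$ strong deformation retracts onto $\X_N(K)$. Restricting this SDR to the component $\X_\Gamma^{*}(G)$ of the trivial representation — which lies inside the trivial stratum and is preserved by the retraction, the trivial representation being a fixed point — yields an SDR of $\X_\Gamma^{*}(G)$ onto $\X_\Gamma^{*}(G)\cap\X_\Gamma(K)=\X_\Gamma^{*}(K)$. This is exactly the assertion that $\Gamma$ is special flawed, with the component of the trivial representation playing the role of the distinguished component, precisely as in the proof of Theorem \ref{simpletored}.

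The hard part, and the reason this method gives only special flawedness rather than flawedness, is the behavior over the nontrivial classes $[\rho_F^0]\in\X_F(G)$. A Luna-type slice argument should identify such a stratum with $\X_N\bigl(Z_G(\rho_F^0(F))\bigr)$, where the centralizer $H=Z_G(\rho_F^0(F))$ is reductive but, being the centralizer of a finite (possibly non-cyclic) subgroup, may be \emph{disconnected}; the strong flawedness of $N$ in Theorem \ref{thm:main} is established only for connected reductive $\C$-groups, so it does not directly produce an SDR of $\X_N(H)$ onto $\X_N(H\cap K)$. This is exactly the feature that distinguishes the present situation from the free-by-finite case (Theorem \ref{thm:freebyfinite}): there the free generators carry no relations and may each be deformed independently by the $KAK$-homotopy of Proposition \ref{prop:commuting-deformation} while the conjugated finite part is held fixed, so \emph{every} stratum is retracted; here the nilpotent relations force the use of Bergeron's global retraction, which needs the ambient group connected, a condition available only on the trivial stratum (where $H=G$). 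Making the slice identification and the relative polystability equivalence precise is the one routine-but-technical point I would still need to verify carefully.
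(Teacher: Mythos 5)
Your proof is correct, and it is genuinely different from the paper's argument. The paper never leaves $\hom(\Gamma,G)$: it sets $Y=\hom^{de}(\Gamma,G)$, the locus where the nilpotent-part generators $A_i$ have distinct eigenvalues, identifies the Kempf--Ness set of $\overline{Y}$ as the tuples with the $A_j$ normal and the $B_j$ unitary (Bergeron's normality theorem \cite{Berg} for the nilpotent part, Corollary \ref{cor:elliptic-to-unitary} for the finite part), and then retracts that set onto $\overline{\hom^{de}(\Gamma,K)}$ via the eigenvalue-scaling map of \cite{FlLa4}; its distinguished component is the closure of the distinct-eigenvalue locus, which in general contains classes whose restriction to $F$ is \emph{nontrivial}. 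You instead stratify $\X_\Gamma(G)$ over the finite discrete space $\X_F(G)$ (rigidity of representations of finite groups), observe that the fiber over the trivial class is a clopen copy of $\X_N(G)$ --- and your polystability equivalence is right, since the $G$-orbit of $(\rho|_N,\mathrm{triv})$ is the orbit of $\rho|_N$ times a fixed point --- and then quote Theorem \ref{thm:main} as a black box, taking the component through the trivial representation as the distinguished one. What your route buys: it is softer (no Kempf--Ness computation, no eigenvalue scaling), it reuses Theorem \ref{thm:main} directly, and it isolates exactly why the method stops at special flawedness: the nontrivial strata are essentially $\X_N(H)$ for centralizers $H$ that may be disconnected, where Bergeron's theorem is unavailable. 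What the paper's route buys: it runs on the same ``Kempf--Ness set relative to $\overline{Y}$'' machinery as Theorem \ref{thm:connectedRAAG}, so the two special-flawedness results come from one uniform framework, and it yields an explicit description of the Kempf--Ness set over the distinguished component. Two small cautions. First, your map to $\X_F(G)$ is induced by restriction (precomposition with $F\hookrightarrow\Gamma$), not post-composition. Second, what you actually exhibit is a \emph{connected} component (a priori a union of irreducible components) retracting onto its compact part, whereas the definition of special flawed literally asks for an irreducible component; but this looseness is inherited from the paper itself (its proof of Theorem \ref{simpletored} uses the connected component of the trivial representation, and the closure $\overline{Y}$ in its proof of this theorem is likewise not obviously irreducible), so it is not a defect introduced by your argument.
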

\begin{proof}
Let $\Gamma=N\times F$, where $N$ is nilpotent and $F$ is finite,
and let $a_{1},\cdots,a_{r}$ be generators of $N$ and $b_{1},\cdots,b_{s}$
be generators of $F$. For a representation $\rho:\Gamma\to G$ write
$A_{i}=\rho(a_{i})$ and $B_{i}=\rho(b_{i})$, so that: 
\[
[A_{i},A_{j}]=[A_{i},B_{j}]=1
\]
Now, let $\rho:\Gamma\to G$ be a polystable representation. Then
the $A_{i}=\rho(a_{i})$ generate a reductive nilpotent group. Hence,
by Bergeron's result \cite{Berg}, there is some $g\in G$ , such
that $g\cdot A=(gA_{1}g^{-1},\cdots,gA_{r}g^{-1})$ is an $r$-tuple
of normal matrices.

Again, let $Y:=\hom^{de}(\Gamma,G)\subset\overline{Y}\subset\hom(\Gamma,G)$
be the subset of representations which have all $A_{i}$ with non-repeating
eigenvalues. Then, the Kempf-Ness set of $Y$ is given by: 
\[
\mathcal{KN}_{\Gamma}^{Y}=\{(A_{1},\cdots,A_{r},B_{1},\cdots,B_{s})\in Y\ \mid\ A_{j}\mbox{ are normal and }B_{j}\mbox{ are unitary}\}.
\]
Indeed, $\mathcal{KN}_{\Gamma}^{Y}\subset\mathcal{KN}_{\Gamma}$ consists
of matrices with minimum Frobenius norm in each $G$-orbit, normal
matrices have the minimum norm in their respective $G$-orbits and,
since $B_{j}$'s are elliptic, by Corollary \ref{cor:elliptic-to-unitary},
they are in fact unitary.

Now, by the same argument as before, we get that $\mathcal{KN}_{\Gamma}^{\overline{Y}}$
is the closure of $\mathcal{KN}_{\Gamma}^{Y}$, which means that:
\[
\mathcal{KN}_{\Gamma}^{\overline{Y}}=\{(A_{1},\cdots,A_{r},B_{1},\cdots,B_{s})\in\overline{Y}\ \mid\ A_{j}\mbox{ are normal and }B_{j}\mbox{ are unitary}\}.
\]
Finally, using the eigenvalue scaling map \cite{FlLa4}, we can show
that there is a $K$-equivariant SDR from $\mathcal{KN}_{\Gamma}^{\overline{Y}}$
to $\overline{\hom^{de}(\Gamma,K)}$, inducing an SDR from $\mathcal{KN}_{\Gamma}^{\overline{Y}}/K$
to $\overline{\hom^{de}(\Gamma,K)}/K$ as wanted. 
\end{proof}

\section{Questions and Conjectures} \label{QC} 

In this final section we list some questions and conjectures for further research:

\begin{enumerate}
\item From the work in \cite{munozsl2,munozsu2} it is clear that torus knots are $\SL(2,\C)$-flawed. We will call a group a \textit{generalized torus link group} if it
can be presented as $$\langle a_{1},...,a_{r}\ |\ a_{i}^{n_{i}}=a_{j}^{n_{j}}\text{ for all }i,j\rangle$$ for positive integers $n_{1},...,n_{r}$. When $r=2$ these are torus link groups and if further $\gcd(n_1,n_2)=1$
these are torus knot groups. We conjecture that generalized
torus link groups are flawed.
\item We know that closed hyperbolic surface groups are flawless and that was shown using
Higgs bundle theory. Given the work in \cite{FGN}, we conjecture that all non-abelian K\"ahler groups
are flawless.
\item A group is {\it supersolvable} if it admits an invariant normal series where all the factors are cyclic groups.  Finitely generated supersolvable groups generalize finitely generated nilpotent groups. Given results in \cite{Borel-Serre,SpSt}, we conjecture that finitely generated supersolvable groups are flawed. 
\item We have shown free products of nilpotent groups are flawed (Theorem \ref{main:thm}). Are free
products of nilpotent groups amalgamated over abelian groups also
flawed?
\item Thinking more like a geometric group theorist, if two groups are commensurable,
and one is flawed, is the other also flawed?  
\end{enumerate}
\newcommand{\etalchar}[1]{$^{#1}$}
\def\cdprime{$''$} \def\Dbar{\leavevmode\lower.6ex\hbox to 0pt{\hskip-.23ex
  \accent"16\hss}D}
\providecommand{\bysame}{\leavevmode\hbox to3em{\hrulefill}\thinspace}
\providecommand{\MR}{\relax\ifhmode\unskip\space\fi MR }
\providecommand{\MRhref}[2]{%
  \href{http://www.ams.org/mathscinet-getitem?mr=#1}{#2}
}
\providecommand{\href}[2]{#2}

\end{document}